\tikzset{main node/.style={circle,fill=white!20,draw,minimum size=.4cm,inner sep=0pt},
            }
\newcommand{\z}{\mathbb{Z}}
\newcommand{\K}{\mathcal{K}}
\newcommand{\Ll}{\mathcal{L}}
\newcommand{\Z}{\mathcal{Z}}
\newcommand{\M}{\Z_{\K_n}(D^1,S^0)}
\newcommand{\Ind}{\operatorname{Ind}}
\newcommand{\abs}[1]{\lvert#1\rvert}
\theoremstyle{plain}
\newtheorem{theorem}{Theorem}[section]
\newtheorem{proposition}[theorem]{Proposition}
\newtheorem{lemma}[theorem]{Lemma}
\theoremstyle{plain} 
\newcommand{\thistheoremname}{}
\newtheorem{genericthm}[theorem]{\thistheoremname}
\theoremstyle{definition}
\newtheorem{definition}[theorem]{Definition}
\newtheorem{example}[theorem]{Example}
\newtheorem{corollary}[theorem]{Corollary}
\theoremstyle{plain} 
    \newtheoremstyle{TheoremNum}
        {\topsep}{\topsep}              
        {\itshape}                      
        {}                              
        {\bfseries}                     
        {.}                             
        { }                             
        {\thmname{#1}\thmnote{ \bfseries #3}}
    \theoremstyle{TheoremNum}
    \newtheorem{theoremcopy}{Theorem}
    \newtheorem{propositioncopy}{Proposition}
    \newtheorem{lemmacopy}{Lemma}
\crefname{lemma}{lemma}{lemmas}
\Crefname{lemma}{Lemma}{Lemmas}
\crefname{thm}{theorem}{theorems}
\Crefname{thm}{Theorem}{Theorems}
\crefname{defn}{definition}{definitions}
\Crefname{defn}{Definition}{Definitions}
\crefname{equation}{equation}{equations}%
\crefname{figure}{Figure}{figures}
\DeclarePairedDelimiter\floor{\lfloor}{\rfloor}
\title{On the Structure of Polyhedral Products}
\author{Shouman Das}
    \def\Gread@@xetex#1{%
      \IfFileExists{"\Gin@base".bb}%
      {\Gread@eps{\Gin@base.bb}}%
      {\Gread@@xetex@aux#1}%
    }
    \definecolor{urlcolor}{rgb}{0,.145,.698}
    \definecolor{linkcolor}{rgb}{.71,0.21,0.01}
    \definecolor{citecolor}{rgb}{.12,.54,.11}
    \definecolor{ansi-black}{HTML}{3E424D}
    \definecolor{ansi-black-intense}{HTML}{282C36}
    \definecolor{ansi-red}{HTML}{E75C58}
    \definecolor{ansi-red-intense}{HTML}{B22B31}
    \definecolor{ansi-green}{HTML}{00A250}
    \definecolor{ansi-green-intense}{HTML}{007427}
    \definecolor{ansi-yellow}{HTML}{DDB62B}
    \definecolor{ansi-yellow-intense}{HTML}{B27D12}
    \definecolor{ansi-blue}{HTML}{208FFB}
    \definecolor{ansi-blue-intense}{HTML}{0065CA}
    \definecolor{ansi-magenta}{HTML}{D160C4}
    \definecolor{ansi-magenta-intense}{HTML}{A03196}
    \definecolor{ansi-cyan}{HTML}{60C6C8}
    \definecolor{ansi-cyan-intense}{HTML}{258F8F}
    \definecolor{ansi-white}{HTML}{C5C1B4}
    \definecolor{ansi-white-intense}{HTML}{A1A6B2}
    \definecolor{ansi-default-inverse-fg}{HTML}{FFFFFF}
    \definecolor{ansi-default-inverse-bg}{HTML}{000000}
\def\PY@reset{\let\PY@it=\relax \let\PY@bf=\relax%
    \let\PY@ul=\relax \let\PY@tc=\relax%
    \let\PY@bc=\relax \let\PY@ff=\relax}
\def\PY@tok#1{\csname PY@tok@#1\endcsname}
\def\PY@toks#1+{\ifx\relax#1\empty\else%
    \PY@tok{#1}\expandafter\PY@toks\fi}
\def\PY@do#1{\PY@bc{\PY@tc{\PY@ul{%
    \PY@it{\PY@bf{\PY@ff{#1}}}}}}}
\def\PY#1#2{\PY@reset\PY@toks#1+\relax+\PY@do{#2}}
\def\csname PY@tok@w\endcsname{\def\PY@tc##1{\textcolor[rgb]{0.73,0.73,0.73}{##1}}}
\def\csname PY@tok@c\endcsname{\let\PY@it=\textit\def\PY@tc##1{\textcolor[rgb]{0.25,0.50,0.50}{##1}}}
\def\csname PY@tok@cp\endcsname{\def\PY@tc##1{\textcolor[rgb]{0.74,0.48,0.00}{##1}}}
\def\csname PY@tok@k\endcsname{\let\PY@bf=\textbf\def\PY@tc##1{\textcolor[rgb]{0.00,0.50,0.00}{##1}}}
\def\csname PY@tok@kp\endcsname{\def\PY@tc##1{\textcolor[rgb]{0.00,0.50,0.00}{##1}}}
\def\csname PY@tok@kt\endcsname{\def\PY@tc##1{\textcolor[rgb]{0.69,0.00,0.25}{##1}}}
\def\csname PY@tok@o\endcsname{\def\PY@tc##1{\textcolor[rgb]{0.40,0.40,0.40}{##1}}}
\def\csname PY@tok@ow\endcsname{\let\PY@bf=\textbf\def\PY@tc##1{\textcolor[rgb]{0.67,0.13,1.00}{##1}}}
\def\csname PY@tok@nb\endcsname{\def\PY@tc##1{\textcolor[rgb]{0.00,0.50,0.00}{##1}}}
\def\csname PY@tok@nf\endcsname{\def\PY@tc##1{\textcolor[rgb]{0.00,0.00,1.00}{##1}}}
\def\csname PY@tok@nc\endcsname{\let\PY@bf=\textbf\def\PY@tc##1{\textcolor[rgb]{0.00,0.00,1.00}{##1}}}
\def\csname PY@tok@nn\endcsname{\let\PY@bf=\textbf\def\PY@tc##1{\textcolor[rgb]{0.00,0.00,1.00}{##1}}}
\def\csname PY@tok@ne\endcsname{\let\PY@bf=\textbf\def\PY@tc##1{\textcolor[rgb]{0.82,0.25,0.23}{##1}}}
\def\csname PY@tok@nv\endcsname{\def\PY@tc##1{\textcolor[rgb]{0.10,0.09,0.49}{##1}}}
\def\csname PY@tok@no\endcsname{\def\PY@tc##1{\textcolor[rgb]{0.53,0.00,0.00}{##1}}}
\def\csname PY@tok@nl\endcsname{\def\PY@tc##1{\textcolor[rgb]{0.63,0.63,0.00}{##1}}}
\def\csname PY@tok@ni\endcsname{\let\PY@bf=\textbf\def\PY@tc##1{\textcolor[rgb]{0.60,0.60,0.60}{##1}}}
\def\csname PY@tok@na\endcsname{\def\PY@tc##1{\textcolor[rgb]{0.49,0.56,0.16}{##1}}}
\def\csname PY@tok@nt\endcsname{\let\PY@bf=\textbf\def\PY@tc##1{\textcolor[rgb]{0.00,0.50,0.00}{##1}}}
\def\csname PY@tok@nd\endcsname{\def\PY@tc##1{\textcolor[rgb]{0.67,0.13,1.00}{##1}}}
\def\csname PY@tok@s\endcsname{\def\PY@tc##1{\textcolor[rgb]{0.73,0.13,0.13}{##1}}}
\def\csname PY@tok@sd\endcsname{\let\PY@it=\textit\def\PY@tc##1{\textcolor[rgb]{0.73,0.13,0.13}{##1}}}
\def\csname PY@tok@si\endcsname{\let\PY@bf=\textbf\def\PY@tc##1{\textcolor[rgb]{0.73,0.40,0.53}{##1}}}
\def\csname PY@tok@se\endcsname{\let\PY@bf=\textbf\def\PY@tc##1{\textcolor[rgb]{0.73,0.40,0.13}{##1}}}
\def\csname PY@tok@sr\endcsname{\def\PY@tc##1{\textcolor[rgb]{0.73,0.40,0.53}{##1}}}
\def\csname PY@tok@ss\endcsname{\def\PY@tc##1{\textcolor[rgb]{0.10,0.09,0.49}{##1}}}
\def\csname PY@tok@sx\endcsname{\def\PY@tc##1{\textcolor[rgb]{0.00,0.50,0.00}{##1}}}
\def\csname PY@tok@m\endcsname{\def\PY@tc##1{\textcolor[rgb]{0.40,0.40,0.40}{##1}}}
\def\csname PY@tok@gh\endcsname{\let\PY@bf=\textbf\def\PY@tc##1{\textcolor[rgb]{0.00,0.00,0.50}{##1}}}
\def\csname PY@tok@gu\endcsname{\let\PY@bf=\textbf\def\PY@tc##1{\textcolor[rgb]{0.50,0.00,0.50}{##1}}}
\def\csname PY@tok@gd\endcsname{\def\PY@tc##1{\textcolor[rgb]{0.63,0.00,0.00}{##1}}}
\def\csname PY@tok@gi\endcsname{\def\PY@tc##1{\textcolor[rgb]{0.00,0.63,0.00}{##1}}}
\def\csname PY@tok@gr\endcsname{\def\PY@tc##1{\textcolor[rgb]{1.00,0.00,0.00}{##1}}}
\def\csname PY@tok@ge\endcsname{\let\PY@it=\textit}
\def\csname PY@tok@gs\endcsname{\let\PY@bf=\textbf}
\def\csname PY@tok@gp\endcsname{\let\PY@bf=\textbf\def\PY@tc##1{\textcolor[rgb]{0.00,0.00,0.50}{##1}}}
\def\csname PY@tok@go\endcsname{\def\PY@tc##1{\textcolor[rgb]{0.53,0.53,0.53}{##1}}}
\def\csname PY@tok@gt\endcsname{\def\PY@tc##1{\textcolor[rgb]{0.00,0.27,0.87}{##1}}}
\def\csname PY@tok@err\endcsname{\def\PY@bc##1{\setlength{\fboxsep}{0pt}\fcolorbox[rgb]{1.00,0.00,0.00}{1,1,1}{\strut ##1}}}
\def\csname PY@tok@kc\endcsname{\let\PY@bf=\textbf\def\PY@tc##1{\textcolor[rgb]{0.00,0.50,0.00}{##1}}}
\def\csname PY@tok@kd\endcsname{\let\PY@bf=\textbf\def\PY@tc##1{\textcolor[rgb]{0.00,0.50,0.00}{##1}}}
\def\csname PY@tok@kn\endcsname{\let\PY@bf=\textbf\def\PY@tc##1{\textcolor[rgb]{0.00,0.50,0.00}{##1}}}
\def\csname PY@tok@kr\endcsname{\let\PY@bf=\textbf\def\PY@tc##1{\textcolor[rgb]{0.00,0.50,0.00}{##1}}}
\def\csname PY@tok@bp\endcsname{\def\PY@tc##1{\textcolor[rgb]{0.00,0.50,0.00}{##1}}}
\def\csname PY@tok@fm\endcsname{\def\PY@tc##1{\textcolor[rgb]{0.00,0.00,1.00}{##1}}}
\def\csname PY@tok@vc\endcsname{\def\PY@tc##1{\textcolor[rgb]{0.10,0.09,0.49}{##1}}}
\def\csname PY@tok@vg\endcsname{\def\PY@tc##1{\textcolor[rgb]{0.10,0.09,0.49}{##1}}}
\def\csname PY@tok@vi\endcsname{\def\PY@tc##1{\textcolor[rgb]{0.10,0.09,0.49}{##1}}}
\def\csname PY@tok@vm\endcsname{\def\PY@tc##1{\textcolor[rgb]{0.10,0.09,0.49}{##1}}}
\def\csname PY@tok@sa\endcsname{\def\PY@tc##1{\textcolor[rgb]{0.73,0.13,0.13}{##1}}}
\def\csname PY@tok@sb\endcsname{\def\PY@tc##1{\textcolor[rgb]{0.73,0.13,0.13}{##1}}}
\def\csname PY@tok@sc\endcsname{\def\PY@tc##1{\textcolor[rgb]{0.73,0.13,0.13}{##1}}}
\def\csname PY@tok@dl\endcsname{\def\PY@tc##1{\textcolor[rgb]{0.73,0.13,0.13}{##1}}}
\def\csname PY@tok@s2\endcsname{\def\PY@tc##1{\textcolor[rgb]{0.73,0.13,0.13}{##1}}}
\def\csname PY@tok@sh\endcsname{\def\PY@tc##1{\textcolor[rgb]{0.73,0.13,0.13}{##1}}}
\def\csname PY@tok@s1\endcsname{\def\PY@tc##1{\textcolor[rgb]{0.73,0.13,0.13}{##1}}}
\def\csname PY@tok@mb\endcsname{\def\PY@tc##1{\textcolor[rgb]{0.40,0.40,0.40}{##1}}}
\def\csname PY@tok@mf\endcsname{\def\PY@tc##1{\textcolor[rgb]{0.40,0.40,0.40}{##1}}}
\def\csname PY@tok@mh\endcsname{\def\PY@tc##1{\textcolor[rgb]{0.40,0.40,0.40}{##1}}}
\def\csname PY@tok@mi\endcsname{\def\PY@tc##1{\textcolor[rgb]{0.40,0.40,0.40}{##1}}}
\def\csname PY@tok@il\endcsname{\def\PY@tc##1{\textcolor[rgb]{0.40,0.40,0.40}{##1}}}
\def\csname PY@tok@mo\endcsname{\def\PY@tc##1{\textcolor[rgb]{0.40,0.40,0.40}{##1}}}
\def\csname PY@tok@ch\endcsname{\let\PY@it=\textit\def\PY@tc##1{\textcolor[rgb]{0.25,0.50,0.50}{##1}}}
\def\csname PY@tok@cm\endcsname{\let\PY@it=\textit\def\PY@tc##1{\textcolor[rgb]{0.25,0.50,0.50}{##1}}}
\def\csname PY@tok@cpf\endcsname{\let\PY@it=\textit\def\PY@tc##1{\textcolor[rgb]{0.25,0.50,0.50}{##1}}}
\def\csname PY@tok@c1\endcsname{\let\PY@it=\textit\def\PY@tc##1{\textcolor[rgb]{0.25,0.50,0.50}{##1}}}
\def\csname PY@tok@cs\endcsname{\let\PY@it=\textit\def\PY@tc##1{\textcolor[rgb]{0.25,0.50,0.50}{##1}}}
        \newbox\Wrappedcontinuationbox 
        \newbox\Wrappedvisiblespacebox 
        \newcommand*\Wrappedvisiblespace {\textcolor{red}{\textvisiblespace}} 
        \newcommand*\Wrappedcontinuationsymbol {\textcolor{red}{\llap{\tiny$\m@th\hookrightarrow$}}} 
        \newcommand*\Wrappedcontinuationindent {3ex } 
        \newcommand*\Wrappedafterbreak {\kern\Wrappedcontinuationindent\copy\Wrappedcontinuationbox} 
        \newcommand*\Wrappedbreaksatspecials {%
            \def\PYGZus{\discretionary{\char`\_}{\Wrappedafterbreak}{\char`\_}}%
            \def\PYGZob{\discretionary{}{\Wrappedafterbreak\char`\{}{\char`\{}}%
            \def\PYGZcb{\discretionary{\char`\}}{\Wrappedafterbreak}{\char`\}}}%
            \def\PYGZca{\discretionary{\char`\^}{\Wrappedafterbreak}{\char`\^}}%
            \def\PYGZam{\discretionary{\char`\&}{\Wrappedafterbreak}{\char`\&}}%
            \def\PYGZlt{\discretionary{}{\Wrappedafterbreak\char`\<}{\char`\<}}%
            \def\PYGZgt{\discretionary{\char`\>}{\Wrappedafterbreak}{\char`\>}}%
            \def\PYGZsh{\discretionary{}{\Wrappedafterbreak\char`\#}{\char`\#}}%
            \def\PYGZpc{\discretionary{}{\Wrappedafterbreak\char`\%}{\char`\%}}%
            \def\PYGZdl{\discretionary{}{\Wrappedafterbreak\char`\$}{\char`\$}}%
            \def\PYGZhy{\discretionary{\char`\-}{\Wrappedafterbreak}{\char`\-}}%
            \def\PYGZsq{\discretionary{}{\Wrappedafterbreak\textquotesingle}{\textquotesingle}}%
            \def\PYGZdq{\discretionary{}{\Wrappedafterbreak\char`\"}{\char`\"}}%
            \def\PYGZti{\discretionary{\char`\~}{\Wrappedafterbreak}{\char`\~}}%
        } 
        \newcommand*\Wrappedbreaksatpunct {%
            \lccode`\~`\.\lowercase{\def~}{\discretionary{\hbox{\char`\.}}{\Wrappedafterbreak}{\hbox{\char`\.}}}%
            \lccode`\~`\,\lowercase{\def~}{\discretionary{\hbox{\char`\,}}{\Wrappedafterbreak}{\hbox{\char`\,}}}%
            \lccode`\~`\;\lowercase{\def~}{\discretionary{\hbox{\char`\;}}{\Wrappedafterbreak}{\hbox{\char`\;}}}%
            \lccode`\~`\:\lowercase{\def~}{\discretionary{\hbox{\char`\:}}{\Wrappedafterbreak}{\hbox{\char`\:}}}%
            \lccode`\~`\?\lowercase{\def~}{\discretionary{\hbox{\char`\?}}{\Wrappedafterbreak}{\hbox{\char`\?}}}%
            \lccode`\~`\!\lowercase{\def~}{\discretionary{\hbox{\char`\!}}{\Wrappedafterbreak}{\hbox{\char`\!}}}%
            \lccode`\~`\/\lowercase{\def~}{\discretionary{\hbox{\char`\/}}{\Wrappedafterbreak}{\hbox{\char`\/}}}%
            \catcode`\.\active
            \catcode`\,\active 
            \catcode`\;\active
            \catcode`\:\active
            \catcode`\?\active
            \catcode`\!\active
            \catcode`\/\active 
            \lccode`\~`\~ 	
        }
    \let\OriginalVerbatim=\Verbatim
    \renewcommand{\Verbatim}[1][1]{%
        \sbox\Wrappedcontinuationbox {\Wrappedcontinuationsymbol}%
        \sbox\Wrappedvisiblespacebox {\FV@SetupFont\Wrappedvisiblespace}%
        \def\FancyVerbFormatLine ##1{\hsize\linewidth
            \vtop{\raggedright\hyphenpenalty\z@\exhyphenpenalty\z@
                \doublehyphendemerits\z@\finalhyphendemerits\z@
                \strut ##1\strut}%
        }%
        \def\FV@Space {%
            \nobreak\hskip\z@ plus\fontdimen3\font minus\fontdimen4\font
            \discretionary{\copy\Wrappedvisiblespacebox}{\Wrappedafterbreak}
            {\kern\fontdimen2\font}%
        }%
        
        \Wrappedbreaksatspecials
        \OriginalVerbatim[#1,codes*=\Wrappedbreaksatpunct]%
    }
    \definecolor{incolor}{HTML}{303F9F}
    \definecolor{outcolor}{HTML}{D84315}
    \definecolor{cellborder}{HTML}{CFCFCF}
    \definecolor{cellbackground}{HTML}{F7F7F7}
    \newcommand{\boxspacing}{\kern\kvtcb@left@rule\kern\kvtcb@boxsep}
    \newcommand{\prompt}[4]{
        \ttfamily\llap{{\color{#2}[#3]:\hspace{3pt}#4}}\vspace{-\baselineskip}
    }
\begin{document}

\maketitle

\begin{spacing}{1.5}
   \tableofcontents
\end{spacing}

\begin{curriculumvitae}
The author was born in Bangladesh. In 2014, he completed his undergraduate studies at the University of Tokyo, Japan with a scholarship (Monbukagakusho) offered by the Japan Govt. In the same year, he joined the PhD program at the Mathematics department of University of Rochester. He pursued his post-graduate research under the guidance of Professor Frederick R. Cohen. 
\end{curriculumvitae}


\begin{acknowledgments}

This has been a long and arduous journey for me to pursue higher studies in Mathematics which I could only dream of when I was a kid. First, I want to give my special thanks to my advisor Fred who has been an ever-encouraging figure to me and always showed great care and patience to answer all my naive questions. Next, I want to thank my parents and siblings who have always supported me in my whole life. I am grateful to Jonathan Pakianathan with whom I have many fruitful discussions about math. Thanks to Keping, Philipp, Ugur, Wayne, Qiao Feng for making a relaxed environment at the office. Many thanks to Shovon, Rupam, Kamrul, Ankani and all other friends who made my life at Rochester pleasant and peaceful.

\end{acknowledgments}

\begin{abstract}
    In this thesis, we study the structure of the polyhedral product $\Z_{\K}(D^1,S^0)$ determined by an abstract simplicial complex $\K$ and the pair $(D^1,S^0)$. We showed that there is natural embedding of the hypercube graph in $\Z_{\K_n}(D^1,S^0)$ where $\K_n$ is the boundary of an $n$-gon. This also provides a new proof of a known theorem about genus of the hypercube graph. We give a description of the invertible natural transformations of the polyhedral product functor. Then, we study the action of the cyclic group $\z_n$ on the space $\M$. This action determines a $\z[\z_n]$-module structure of the homology group $H_*(\M)$. We also study the Leray-Serre spectral sequence associated to the homotopy orbit space $E\z_n\times_{\z_n} \M$. 
\end{abstract}

\begin{contributorsandfunding}
This work was supported by a dissertation committee consisting of Professors Frederick R. Cohen (advisor) and Jonathan Pakianathan from the Department of Mathematics, Professor Daniel Stefankovic from the Department of Computer Science, and Professor Yonathan Shapir from the Department of Physics and Astronomy served as the chair of the dissertation committee. Chapter~\ref{chap:chapter-2} is based on the following paper of the author

\begin{itemize}
    \item Das, S. (2019). Genus of the hypercube graph and real moment-angle complexes. Topology and its Applications, 258, 415-424.\\ \href{https://doi.org/10.1016/j.topol.2019.03.009}{https://doi.org/10.1016/j.topol.2019.03.009}.

\end{itemize}

This work was supported by the Department of Mathematics, University
of Rochester.
\end{contributorsandfunding}

\chapter*{List of Symbols}
\begin{tabular}{ll}
     $\K$ & an abstract simplicial complex on vertex set $[n]=\{1,\cdots,n\}$ \\
     $\K_n$& the boundary of an $n$-gon, $n\geq3$\\
     $\Z_{\K}(D^1,S^0)$& polyhedral product space corresponding to pair $(D^1,S^0)$ and $\K$\\
     $\gamma(G)$& genus of a graph $G$\\
     $Q_n$ & hypercube graph in dimension $n$\\
     $\z_n$ & cyclic group of order $n$ ( $n\geq 3$)\\
     $\mathfrak{L}_n$& the set of $n$-length Lyndon words on alphabet $\{0,1\}$\\ 
     $\iota(w)$& number of blocks of $0$'s in a Lyndon word $w$\\
     $L(n,k)$& number of $n$-length binary Lyndon words $w$ with $\iota(w)=k$\\
     $k$ & field with characteristic 0 or $\mathbb{Q}$\\
\end{tabular}

\listoffigures

\listoftables

\chapter{Introduction}
\label{chap:introduction}















A polyhedral product is defined as a natural topological subspaces of Cartesian product, determined by a simlicial complex $\K$. This construction arose as a generalization of spaces known as moment-angle complexes which have been studied under the field of Toric Topology. Early instances of the theory of polyhedral products can be found in the works of G. Porter \cite{porter1,porter2}. In the 80s and 90s, moment-angle complexes have been studied by S. L\'opez deMedrano as spaces resulting from the intersection of real quadrics~\cite{santiago1}. Similar approach has been used in ~\cite{santiago2,bosio} to classify moment-angle manifolds as connected sum of product of spheres. 
The cohomology of moment-angle complexes has been studied by Hochster \cite{MR0441987}, Baskakov~\cite{MR1992088}, Panov~\cite{MR2388493}, Buchshtaber and Panov~\cite{MR1799011}, Franz~\cite{MR2255969,MR1997219}.

Denham and Sucio~\cite{DenSuci} studied the polyhedral products with respect to fibrations.
In a seminal paper~\cite{BBCG}, A. Bahri, M. Bendersky, F. R. Cohen and S. Gitler (BBCG) provided fundamental results about stable splitting of polyhedral products. This result shows that after taking the reduced suspension the polyhedral product space stably splits into a wedge sum of smash polyhedral product. It is time give some definitions which we will use in the whole thesis.
\section{Some Definitions and Notations} 
In this thesis, we use \emph{polyhedral product} or \emph{moment-angle complex} to denote the following object. 
\begin{definition}
Let $\K$ be a simplicial complex with vertex set $[n]=\{1,\cdots,n\}$ and $(\underline{X},\underline{A})$ denote a collection of triples $(X_i,A_i,x_i)$ of CW-complexes. The generalized moment-angle complex or polyhedral product functor $\Z_{\K}(\underline{X},\underline{A})$ is defined using the functor 
$$
D : \textsc{cat}({\K})\to CW_*
$$
as follows: for each $\sigma$ in $\K$, let $D(\sigma) = \prod\limits_{i=1}^{i=n} Y_i$, where $Y_i = \begin{cases}
X_i, & \mbox{ if } i\in \sigma\\
A_i, & \mbox{ if } i\notin \sigma
\end{cases}$ . \\The generalized moment-angle complex is $\Z_{\K}(\underline{X},\underline{A}) = \bigcup_{\sigma\in \K} D(\sigma)=\mathrm{colim}D(\sigma)$. If $(X_i,A_i,x_i)=(X,A,x)$ for all $i$, we write $\Z_{\K}(X,A)$ instead of $\Z_{\K}(\underline{X},\underline{A})$. 
\end{definition}
\begin{definition}
Given a simplicial complex $\K$ and a collection of triples $(\underline{X},\underline{A})={(X_i,A_i,x_i)}_{i=1}^n$, The generalized smash moment-angle complex $\widehat{\Z}_{\K}(\underline{X},\underline{A})$ is defined as the image of $\Z_{\K}(\underline{X},\underline{A})$ in the smash product $X_1\wedge \cdots \wedge X_n$.
\end{definition}
Using the above notations, the following theorem can be found in~\cite{BBCG}.
\begin{theorem}
Given $(\underline{X},\underline{A}) = \{(X_i,A_i)\}_{i=1}^n$ where $(X_i,A_i,x_i)$ are connected, pointed CW-pairs, there is a homotopy equivalence 
$$
H: \Sigma \Z_{\K_n}(X,A) \simeq \Sigma( \bigvee_{I\subset [n]}\widehat{\Z}_{\K}(X_I,A_I))
$$
\end{theorem}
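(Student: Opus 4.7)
The plan is to derive the splitting from two classical inputs: the James-type stable splitting of a Cartesian product into a wedge of its smash subproducts, combined with the fact that $\Z_\K(\underline{X},\underline{A})$ is the colimit of the subspaces $D(\sigma)$ indexed by $\sigma\in\K$. Since $\Sigma(-)$ is a left adjoint and commutes with colimits and wedge sums, and since the James splitting is natural, the strategy is to apply the splitting to each $D(\sigma)$ at once, then regroup the resulting wedge summands according to a fixed subset $I\subseteq[n]$ and recognize each regrouped colimit as a smash polyhedral product.

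First I would recall the stable splitting: for pointed connected CW complexes $Y_1,\ldots,Y_n$, there is a natural homotopy equivalence
$$\Sigma(Y_1\times\cdots\times Y_n)\;\simeq\;\Sigma\bigvee_{\emptyset\neq J\subseteq[n]} Y_J,\qquad Y_J=\bigwedge_{j\in J} Y_j.$$
Applied to $D(\sigma)=\prod_{i=1}^n Y_i^\sigma$, where $Y_i^\sigma=X_i$ if $i\in\sigma$ and $Y_i^\sigma=A_i$ otherwise, this gives
$$\Sigma D(\sigma)\;\simeq\;\Sigma\bigvee_{J\subseteq[n]}\widehat{D}_J(\sigma),\qquad \widehat{D}_J(\sigma)=\bigwedge_{j\in J}Y_j^\sigma.$$
Naturality is the crucial feature: for $\sigma\subseteq\tau$ in $\K$, the inclusion $D(\sigma)\hookrightarrow D(\tau)$ is a product of identities and of the pointed inclusions $A_i\hookrightarrow X_i$, so it carries the wedge summand $\widehat{D}_J(\sigma)$ into $\widehat{D}_J(\tau)$ summand-wise. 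Hence the splittings assemble into a splitting of the whole diagram $D\colon\mathrm{cat}(\K)\to CW_*$.

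Next I would interchange colimit with $\Sigma$ and with the wedge to obtain
$$\Sigma \Z_\K(\underline{X},\underline{A})\;\simeq\;\Sigma \bigvee_{I\subseteq[n]}\mathrm{colim}_{\sigma\in\K}\,\widehat{D}_I(\sigma).$$
The combinatorial core of the argument is to identify, for each fixed $I$, the colimit $\mathrm{colim}_{\sigma\in\K}\widehat{D}_I(\sigma)$ as $\widehat{\Z}_{\K_I}(\underline{X}_I,\underline{A}_I)$, the smash polyhedral product over the full subcomplex $\K_I=\{\sigma\cap I:\sigma\in\K\}$. The observation is that $\widehat{D}_I(\sigma)$ depends only on $\sigma\cap I$, since factors with $i\notin I$ are simply dropped from the smash; thus the diagram $\sigma\mapsto\widehat{D}_I(\sigma)$ factors through $\sigma\mapsto\sigma\cap I$, and its colimit is exactly the smash polyhedral product on $\K_I$ with pair data $(X_i,A_i)$ for $i\in I$.

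The main obstacle is precisely that bookkeeping step: one must check that the face poset of $\K_I$ indexes the diagram cofinally and that the smash pieces glue correctly, in particular that the basepoint collapses used to go from $D(\sigma)$ to $\widehat{D}_I(\sigma)$ are compatible across all the inclusions simultaneously. Once this naturality and cofinality are settled, combining the display above with the identification of each summand yields
$$\Sigma \Z_\K(\underline{X},\underline{A})\;\simeq\;\Sigma\Bigl(\bigvee_{I\subseteq[n]}\widehat{\Z}_{\K_I}(\underline{X}_I,\underline{A}_I)\Bigr),$$
which is the desired equivalence $H$.
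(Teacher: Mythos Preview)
The paper does not give its own proof of this theorem: it is stated in the introduction with the attribution ``the following theorem can be found in~[BBCG]'' and no argument is supplied. So there is no in-paper proof to compare your proposal against.

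That said, your outline is precisely the strategy of the cited BBCG paper: apply the natural James splitting to each $D(\sigma)$, use naturality to obtain a map of diagrams, and pass to the colimit. One remark on where the real work lies: you flag the cofinality/bookkeeping for identifying $\mathrm{colim}_{\sigma\in\K}\widehat D_I(\sigma)$ with $\widehat{\Z}_{\K_I}$ as ``the main obstacle,'' but that step is comparatively routine once you observe $\widehat D_I(\sigma)$ depends only on $\sigma\cap I$. The more delicate point, which your write-up passes over, is why a levelwise homotopy equivalence of diagrams induces a homotopy equivalence on colimits here. This is not automatic; it needs the observation that all the structure maps $D(\sigma)\hookrightarrow D(\tau)$ (and their smash images) are cofibrations, so the strict colimits in question model the homotopy colimits. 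With that addition your sketch matches the BBCG proof.
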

Based on the works of BBCG, in his unpublished PhD thesis Ali Al-Raisi~\cite{Ali} showed that their exits a choice of $Aut(K)$-equivariant homotopy equivalence for the above theorem. Our work on this thesis heavily uses the result of BBCG and Ali Al-Raisi. Related works from a different viewpoint have been done by Fu and Grbic~\cite{fu2020simplicial}.

\section{Overview of this Thesis}
We start with a nice relationship between the hypercube graph and polyhedral product $\M$ corresponding to the pair $(D^1,S^0)$ and $\K_n,$ the boundary of an $n$-gon. We show that the $n$-dimensional hypercube graph naturally embeds in $\M$. We give a new proof to the well-known theorem that the hypercube graph $Q_n$ has genus $1+(n-4)2^{n-3}$ by using the real moment angle complex corresponding to the boundary of an $n$-gon. Moreover, we prove the following proposition about surface embedding of polyhedral products. 

\begin{propositioncopy}[\ref{prop:surface}] 
Let $\K$ be a subcomplex of the boundary of an $n$-gon and each of the vertices are contained in $\K$.Then $\Z_{\K}(D^1,S^0)$ can be embedded in a closed, compact and orientable surface with minimal genus $1+(n-4)2^{n-3}.$
\end{propositioncopy}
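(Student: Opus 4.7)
The plan is to realize $\Z_{\K}(D^1,S^0)$ as a CW-subcomplex of $\Z_{\K_n}(D^1,S^0)$ and then to show that this ambient space is already a closed orientable surface of the stated genus. The first step is immediate from the polyhedral product construction: since $\K\subseteq\K_n$, the diagram $D:\textsc{cat}(\K)\to CW_*$ is the restriction of the corresponding diagram for $\K_n$, so the colimit $\Z_{\K}(D^1,S^0)$ includes naturally into $\Z_{\K_n}(D^1,S^0)$ as a CW-subcomplex. The hypothesis that $\K$ contains every vertex ensures that this inclusion is full on the $0$-skeleton.

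Next I would verify directly that $\Z_{\K_n}(D^1,S^0)$ is a closed, compact, orientable $2$-manifold. The induced CW-structure has $2^n$ $0$-cells (one for each element of $\{\pm 1\}^n$), $n\cdot 2^{n-1}$ $1$-cells (one $D^1$-factor at each of the $n$ vertices of $\K_n$, with $2^{n-1}$ sign choices on the remaining coordinates), and $n\cdot 2^{n-2}$ $2$-cells (one $D^1\times D^1$-block for each of the $n$ edges of $\K_n$, with $2^{n-2}$ sign choices elsewhere). Each $1$-cell lies on the boundary of exactly two $2$-cells, namely those indexed by the two edges of the polygon meeting at its defining vertex, and the link of each $0$-cell is an $n$-cycle, so the underlying space is a closed $2$-manifold. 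Orientability is obtained by giving each $D^1\times D^1$-block the product orientation coming from a fixed cyclic orientation of $\K_n$ and checking that the induced orientations on each shared $1$-cell are opposite.

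The genus now drops out of the Euler characteristic
$$
\chi\bigl(\Z_{\K_n}(D^1,S^0)\bigr)=2^n - n\cdot 2^{n-1} + n\cdot 2^{n-2}=2^{n-2}(4-n),
$$
so $g=(2-\chi)/2 = 1+(n-4)2^{n-3}$. The reference to \emph{minimal genus} in the statement is the classical value for the hypercube graph $Q_n$ recalled earlier in the chapter: since $Q_n$ is precisely the $1$-skeleton of $\Z_{\K_n}(D^1,S^0)$, any orientable surface in which $\Z_{\K_n}(D^1,S^0)$ embeds must have genus at least $1+(n-4)2^{n-3}$, and the CW-structure above realises this minimum. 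Restricting the embedding $\Z_{\K_n}(D^1,S^0)\hookrightarrow\Z_{\K_n}(D^1,S^0)$ along the CW-inclusion from the first step then yields the desired embedding of $\Z_{\K}(D^1,S^0)$.

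The hard part will be the orientability check. Because $\K_n$ is rotationally symmetric and changing one of the $2^{n-2}$ auxiliary signs of a $2$-cell does not affect its orientation, I expect the global consistency check to reduce to a single local sign computation at one vertex of the polygon, which I would carry out explicitly after fixing once and for all a cyclic orientation of $\K_n$ and a standard orientation of $D^1$.
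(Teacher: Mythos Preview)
Your overall strategy matches the paper's: sandwich $\Z_\K(D^1,S^0)$ between polyhedral products and read off the genus from $\Z_{\K_n}(D^1,S^0)$. The paper simply cites the earlier proposition that $\Z_{\K_n}(D^1,S^0)$ is a closed orientable surface of genus $1+(n-4)2^{n-3}$ (proved there by an inductive connected-sum argument), whereas you sketch a direct CW/Euler-characteristic computation; either route is fine.

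There is, however, a genuine gap in your minimality argument. You argue that $Q_n$ is the $1$-skeleton of $\Z_{\K_n}(D^1,S^0)$ and conclude a lower bound on the genus of surfaces containing $\Z_{\K_n}(D^1,S^0)$. But the statement asks about the minimal genus of a surface containing $\Z_{\K}(D^1,S^0)$, not $\Z_{\K_n}(D^1,S^0)$. This is exactly where the hypothesis that $\K$ contains every vertex is needed: it gives the \emph{left} inclusion $\Ll_n\subset\K$, hence $Q_n=\Z_{\Ll_n}(D^1,S^0)\subset\Z_{\K}(D^1,S^0)$, so any surface containing $\Z_\K(D^1,S^0)$ already contains $Q_n$ and must have genus at least $\gamma(Q_n)=1+(n-4)2^{n-3}$. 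The paper's proof uses precisely this chain $\Ll_n\subset\K\subset\K_n$; your remark that the vertex hypothesis makes the inclusion ``full on the $0$-skeleton'' is correct but does not by itself deliver the lower bound.

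A smaller point on orientability: your proposed scheme of giving every $D^1\times D^1$ block the same product orientation will not globally cohere, and your later expectation that ``changing one of the $2^{n-2}$ auxiliary signs of a $2$-cell does not affect its orientation'' is the opposite of what happens. The orientation that works (and that the paper uses elsewhere) alternates with the parity of the sum of the auxiliary $\pm 1$ coordinates; with that convention adjacent squares along a shared $1$-cell do induce opposite orientations. Alternatively, the paper's inductive connected-sum description of $\Z_{\K_{n+1}}$ in terms of $\Z_{\K_n}$ gives orientability for free and avoids the sign bookkeeping entirely.
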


We also define the natural action of $\z_n$ on $\M$ and prove that $\Z_{\K_n}(D^1,S^0)/{\Z_n}$ is an orientable surface. In Ali's thesis~\cite{Ali}, it is shown that $\Z_{\K_n}(D^1,S^0)\to\Z_{\K_n}(D^1,S^0)/{\Z_n}$ is a branched covering. Based on that result we prove the following lemma.
\begin{lemmacopy}[\ref{my_lemma1}]
Let $\K$ be the boundary of an $n$-gon. Then $\Z_\K(D^1,S^0)/\z_n$ is a closed, compact and orientable surface. The genus of $\Z_\K(D^1,S^0)/\z_n$ is given by the following formula:
\begin{equation}
g(\Z_\K(D^1,S^0)/\z_n) = 1 + 2^{n-3} - \frac{1}{2n}\sum_{d|n} \phi(d)2^{n/d} 
\end{equation}
\end{lemmacopy}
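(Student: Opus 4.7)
Plan: The computation reduces to an Euler characteristic calculation via Burnside's formula applied to the natural $\z_n$-equivariant CW structure on $\M$. Since the maximal faces of $\K_n$ are the edges $\{i,i+1\}$ (indices mod $n$), $\M$ decomposes into $2^n$ vertices $(\epsilon_1,\ldots,\epsilon_n)\in\{\pm1\}^n$, $n\cdot 2^{n-1}$ 1-cells (an open interval in some coordinate $i$, with $\pm 1$ at the remaining coordinates), and $n\cdot 2^{n-2}$ 2-cells (an open square indexed by an edge of $\K_n$, signs elsewhere). A generator $g$ of $\z_n$ shifts coordinate indices cyclically, acting cellularly. Counting gives $\chi(\M)=(4-n)2^{n-2}$, in agreement with the known genus $1+(n-4)2^{n-3}$.

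I would first argue that $\M/\z_n$ is a closed, compact, orientable surface. Compactness and closedness descend from $\M$, and Al-Raisi's branched covering result promotes the quotient to a topological surface. For orientability, since $\M$ is orientable, $g$ either preserves or reverses a chosen global orientation, and this can be detected on a single 2-cell. The restriction of $g$ to the 2-cell on $\{i,i+1\}$ is the identity on the two variable coordinates $\mathring D^1\times\mathring D^1$ viewed as the 2-cell on $\{i+1,i+2\}$, hence orientation-preserving. Thus $g$ preserves the global orientation and $\M/\z_n$ inherits an orientation.

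The heart of the proof is
\[
\chi(\M/\z_n)=\frac{1}{n}\sum_{k=0}^{n-1}\chi\bigl(\mathrm{Fix}(g^k)\bigr).
\]
For $k\not\equiv 0\pmod n$, $g^k$ shifts coordinate positions nontrivially; since $\{i+k,i+k+1\}\ne\{i,i+1\}$ for $n\ge 3$, no open 1-cell or 2-cell can be fixed. Hence $\mathrm{Fix}(g^k)$ is contained in the 0-skeleton and consists precisely of sign sequences $(\epsilon_1,\ldots,\epsilon_n)$ with $\epsilon_i=\epsilon_{i+k}$ for all $i$, i.e.\ sequences of period dividing $\gcd(k,n)$. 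There are exactly $2^{\gcd(k,n)}$ such points.

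Reindexing by $d=n/\gcd(k,n)$ and using $\#\{k\in\{0,\ldots,n-1\}:\gcd(k,n)=n/d\}=\phi(d)$ for $d\mid n$, one obtains $\sum_{k=0}^{n-1}2^{\gcd(k,n)}=\sum_{d\mid n}\phi(d)2^{n/d}$. Combining with $\chi(\mathrm{Fix}(e))=\chi(\M)=(4-n)2^{n-2}$ and simplifying,
\[
n\,\chi(\M/\z_n)=(4-n)2^{n-2}-2^n+\sum_{d\mid n}\phi(d)2^{n/d}=-n\cdot 2^{n-2}+\sum_{d\mid n}\phi(d)2^{n/d},
\]
and substituting into $g=1-\chi/2$ yields the stated genus formula. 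The main obstacle I anticipate is not the arithmetic but the clean verification of orientability and of the branched covering manifold structure on $\M/\z_n$; once those are in place, the cellwise fixed-point enumeration and the ensuing number-theoretic simplification are routine.
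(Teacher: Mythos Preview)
Your computation is correct and reaches the same formula, but the route is genuinely different from the paper's. The paper proves the genus formula via the Riemann--Hurwitz formula for the branched cover $\M\to\M/\z_n$: it identifies the branch locus as the set of vertices $(\epsilon_1,\dots,\epsilon_n)\in\{\pm1\}^n$, groups them by the size of their isotropy subgroup using Moreau's count $M(d)$ of primitive $d$-necklaces, and then simplifies $\sum_{d\mid n}M(d)(n-d)$ with the identity $\sum_{d\mid n}dM(d)=2^n$ and $\sum_{d\mid n}M(d)=\frac{1}{n}\sum_{d\mid n}\phi(d)2^{n/d}$. You instead bypass the branched-cover bookkeeping entirely and use the averaged Lefschetz/Burnside identity $\chi(\M/\z_n)=\frac{1}{n}\sum_{k}\chi(\mathrm{Fix}(g^k))$ on the natural CW structure, observing that for $k\not\equiv0$ no positive-dimensional cell is fixed (since $\{i,i+1\}\neq\{i+k,i+k+1\}$ for $n\ge3$) and that the fixed $0$-cells are exactly the $2^{\gcd(k,n)}$ periodic sign sequences. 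Your approach is more elementary---it needs neither the branched-cover structure nor Moreau's formula, and the necklace sum $\sum_{d\mid n}\phi(d)2^{n/d}$ falls out of the reindexing automatically---while the paper's approach has the advantage of making the singular set and isotropy data explicit, which feeds into the later $\z[\z_n]$-module analysis.

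One small caution on the orientability sketch: your claim that $g$ restricted to a $2$-cell is ``the identity on $\mathring D^1\times\mathring D^1$'' is not literally correct---under $\sigma(x_1,\dots,x_n)=(x_2,\dots,x_n,x_1)$ the square on $\{1,2\}$ with coordinates $(s,t)$ lands in the square on $\{n,1\}$ with coordinates $(t,s)$, so one must also track how the global orientation of $\M$ is distributed over the cells (the paper does this via an even/odd parity convention on the auxiliary sign coordinates). Since you already flagged orientability as the delicate point, this is a detail to tighten rather than a real gap; once $g$ is shown to act trivially on $H_2(\M)$ (equivalently, to preserve the fundamental class), your Euler-characteristic argument goes through cleanly.
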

In chapter \ref{chap:chapter-5}, we give a description of the invertible natural transformations of $\Z_{\K}(-,-)$ and prove the following proposition.
\begin{propositioncopy}[\ref{nat_iso}]
The set of invertible natural transformations $Iso(\Z_\K, \Z_\K)$ is isomorphic to the automorphism group $Aut(\K)$.
\end{propositioncopy}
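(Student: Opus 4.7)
My plan is to exhibit an explicit homomorphism $\phi: \mathrm{Aut}(\K) \to \mathrm{Iso}(\Z_\K, \Z_\K)$ coming from coordinate permutation, and to prove that $\phi$ is both injective and surjective. For $\sigma \in \mathrm{Aut}(\K)$ I would define $\phi(\sigma)_{(X,A)} : \Z_\K(X,A) \to \Z_\K(X,A)$ by $(y_1, \ldots, y_n) \mapsto (y_{\sigma^{-1}(1)}, \ldots, y_{\sigma^{-1}(n)})$. Since $\sigma$ carries simplices to simplices, this permutation sends each piece $D(\tau)$ homeomorphically onto $D(\sigma(\tau))$, so it restricts to a well-defined self-homeomorphism of the colimit $\Z_\K(X, A)$, natural in $(X, A)$ and satisfying $\phi(\sigma \sigma') = \phi(\sigma) \circ \phi(\sigma')$. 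Injectivity is immediate: given $\sigma \neq \mathrm{id}$, pick $i$ with $\sigma(i) \neq i$ and consider the vertex $p \in \Z_\K(D^1, S^0)$ with $-1$ in coordinate $i$ and $+1$ elsewhere; then $\phi(\sigma)(p)$ has $-1$ in coordinate $\sigma(i) \neq i$, so $\phi(\sigma) \neq \mathrm{id}$.

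The main work is surjectivity. My strategy is to reconstruct $\sigma$ from a natural isomorphism $\eta: \Z_\K \to \Z_\K$ by examining the composites $\pi_j \circ \eta$, where $\pi_j: \Z_\K(X, A) \to X$ is the $j$-th coordinate projection, itself a natural transformation from $\Z_\K(-,-)$ into the first-factor functor $\mathrm{fst}: (X,A) \mapsto X$. The \emph{key claim} I would prove is that every natural transformation $\nu: \Z_\K(-,-) \to \mathrm{fst}$ is either a coordinate projection $\pi_k$ for some unique $k \in [n]$ or the constant basepoint map. To establish this I would specialize to the pair $(X, X)$: then $\Z_\K(X, X) = X^n$ and $\nu_{(X,X)}: X^n \to X$ becomes a natural transformation of single-variable functors of $X$, which a short Yoneda-type test on a pointed space with enough distinct elements pins down as a coordinate projection or the basepoint constant. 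Naturality of $\nu$ along the pair inclusion $(X, A) \hookrightarrow (X, X)$, whose induced map on polyhedral products is simply the inclusion $\Z_\K(X, A) \hookrightarrow X^n$, then forces $\nu_{(X, A)}$ to be the restriction of that same $\pi_k$.

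Granting the key claim, applying it to $\pi_j \circ \eta$ yields $\sigma(j) \in [n]$ with $\pi_j \circ \eta = \pi_{\sigma(j)}$; the constant case is ruled out because otherwise the image of $\eta$ would miss a full coordinate slab, contradicting invertibility. Running the same analysis on $\eta^{-1}$ produces an inverse function, so $\sigma$ is a bijection of $[n]$. To see $\sigma$ preserves $\K$ I would take any point $p \in \Z_\K(X, A)$ whose non-$A$ support equals a given simplex $\mu \in \K$; the identity $\pi_j(\eta(p)) = p_{\sigma(j)}$ shows that the non-$A$ support of $\eta(p)$ is exactly $\sigma^{-1}(\mu)$, and membership of $\eta(p)$ in $\Z_\K(X, A)$ forces $\sigma^{-1}(\mu) \in \K$; combined with the symmetric statement for $\eta^{-1}$ this gives $\sigma \in \mathrm{Aut}(\K)$. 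Since $\pi_j \circ \eta = \pi_j \circ \phi(\sigma^{-1})$ for every $j$, the conclusion $\eta = \phi(\sigma^{-1})$ follows. I expect the hardest step to be the key claim, where naturality in the full pair variable is essential: restricting to single-variable naturality on $(X, \ast)$ alone would admit extra natural transformations (for instance, ``pick the unique non-basepoint coordinate'' on wedge-type pieces of $\Z_\K(T, \ast)$) that fail to extend naturally across all pairs, and it is precisely the enlargement $(X, A) \hookrightarrow (X, X)$ that closes the argument.
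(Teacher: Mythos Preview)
Your proposal is correct and follows essentially the same strategy as the paper: both reduce first to the pair $(X,X)$, where $\Z_\K(X,X)=X^n$, pin down a permutation by probing with a finite discrete pointed space, extend to arbitrary $(X,A)$ via naturality along the inclusion $(X,A)\hookrightarrow(X,X)$, and finally verify $\sigma\in\mathrm{Aut}(\K)$ by testing on a point whose non-$A$ support is a prescribed simplex. The only packaging difference is that you decompose $\eta$ through the coordinate projections and prove the auxiliary ``key claim'' classifying natural transformations $\Z_\K\to\mathrm{fst}$ one output slot at a time, whereas the paper takes the test space $W=\{*,p_1,\dots,p_n\}$, evaluates $\eta_{(W,W)}$ at the single point $(p_1,\dots,p_n)$, and reads off the entire permutation at once; these are equivalent Yoneda-style arguments and the remaining steps coincide.
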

In chapter \ref{chap:chapter-6}, we study the action of $\Z_n$ on the homology of $\Z_{\K_n}(D^1,S^0)$ and give complete description of the $\z[\z_n]$-module structure of $H_*(\Z_{\K_n}(D^1,S^0))$. We describe the relationship between Lyndon words and the number of orbits arising from the $\z_{n}$-action on the basis of $H_*(\Z_{\K_n}(D^1,S^0))$. We prove the following (for details see chapter \ref{chap:chapter-6}).
\begin{theoremcopy}[\ref{module_structure}]
Let $\K_n$ be the boundary of an $n$-gon and $\z_n=\langle\sigma \rangle$ be the cyclic group of order $n$. Then as a $\mathbb{Z}[\z_n]$-module the homology group $H_1(\M)$ is isomorphic to a direct sum of induced representations $\bigoplus\limits_{w\in \mathfrak{L}}\Ind_{\z_{n/d}}^{\z_n}\ N_w$, where $N_w$ is a direct summand of $H_1(\M)$ as an abelian group, $d>1$ is a divisor of $n$ and $\z_{n/d} = \langle \sigma^{d}\rangle\subset \z_n$. 
 
 Moreover, if $1<d<n$, $N_w$ is isomorphic to $\z^{\iota(w){n/d}-1}$ as a $\z$-module and the action of $\z_{n/d} = \langle \sigma^{d}\rangle$ on $N_w$ has matrix representation\ (with respect to standard basis) as
 
\[
 \left(\begin{array}{c|c|c}
           \begin{array}{cccc}
    \mathbf{A}_{n/d}& & &\mathbf{0}\\
     &\mathbf{A}_{n/d}& &          \\
     &&\ddots&\\
     \mathbf{0}& & & \mathbf{A}_{n/d} 
    \end{array}    & \mathbf{0} & \mathbf{-1} \\
    \hline
    \mathbf 0 & \begin{array}{c}
                  \mathrm{0}\\
                  \mathbf{I}_{n/d-2} 
                   \end{array}  &   \begin{array}{c}-1\\ \mathbf{-1}  \end{array}  
 \end{array}\right)
 \]
 which is a  square matrix of dimension $\iota(w)n/d-1$, with $\iota(w)-1$ copies of standard cycle matrix $\mathbf{A}_{n/d}$ on the upper left block diagonal terms. If $d=n$, then $N_w$ is isomorphic to $ (\z[\z_n])^{\iota(w)-1}$.
 \end{theoremcopy}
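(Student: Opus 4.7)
The plan is to combine the $\z_n$-equivariant BBCG stable splitting with an analysis of the $\z_n$-orbit structure on subsets $I \subset [n]$, which is controlled by the necklace-Lyndon correspondence. Since $|\K_I|$ is a disjoint union of paths for each $I \subsetneq [n]$, the whole computation reduces to understanding the permutation representation of the stabilizer on $\pi_0(\K_I)$.

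First, I would invoke Al-Raisi's $\z_n$-equivariant refinement of the BBCG splitting of $\Sigma\M$; taking reduced first homology yields a $\z_n$-equivariant isomorphism
\[
H_1(\M) \;\cong\; \bigoplus_{I \subset [n]} \widetilde{H}_0(|\K_I|),
\]
where $\K_I$ is the full subcomplex of $\K_n$ on $I$. For $I\subsetneq[n]$, $|\K_I|$ is a disjoint union of paths, one for each maximal cyclic block of consecutive vertices contained in $I$; the cases $I = \emptyset$ and $I = [n]$ contribute zero.

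Second, I would group summands by $\z_n$-orbits on subsets and apply the necklace-Lyndon correspondence: every orbit is uniquely represented by a word $w^{n/d}$ for some Lyndon word $w$ of length $d \mid n$ with $d>1$, and the stabilizer of $w^{n/d}$ is exactly $\z_{n/d}=\langle\sigma^d\rangle$. Writing $N_w := \widetilde{H}_0(|\K_{I_w}|)$ for a representative $I_w$ of the orbit of $w$, the standard induction identity for $\z_n$-sets gives
\[
H_1(\M) \;\cong\; \bigoplus_{w\in\mathfrak{L}} \Ind_{\z_{n/d}}^{\z_n}\! N_w.
\]

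Third, I would compute the $\z[\z_{n/d}]$-module structure of $N_w$. Setting $m=n/d$, the $\iota(w)\cdot m$ components of $\K_{I_w}$ split into $\iota(w)$ free $\z_m$-orbits, one per $1$-block of $w$, so the permutation module on components is $\iota(w)$ copies of the regular representation $\z[\z_m]$, and $N_w$ is the kernel of the augmentation $\iota(w)\cdot\z[\z_m]\to\z$. For $1<d<n$, the $\z[\z_m]$-linear change of coordinates $(x_1,\dots,x_{\iota(w)})\mapsto\bigl((x_2,\dots,x_{\iota(w)}),\,x_1+\cdots+x_{\iota(w)}\bigr)$ produces an isomorphism
\[
N_w \;\cong\; \z[\z_m]^{\iota(w)-1}\oplus I_{\z_m},
\]
where $I_{\z_m}$ is the augmentation ideal. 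The standard basis on each regular-representation summand contributes $\iota(w)-1$ copies of the cyclic-shift matrix $\mathbf{A}_m$ to the block diagonal, and identifying $I_{\z_m}$ with $\z[x]/(1+x+\cdots+x^{m-1})$ via $(x-1)x^{j-1}\mapsto e_j$ converts the action of $\sigma^d=x$ into the companion matrix of $1+x+\cdots+x^{m-1}$, which is precisely the lower-right block displayed in the statement. In the remaining case $d=n$, the stabilizer is trivial, the augmentation ideal vanishes, so $N_w \cong \z^{\iota(w)-1}$ as an abelian group, and the induction from the trivial subgroup to $\z_n$ is the free $\z[\z_n]$-module $(\z[\z_n])^{\iota(w)-1}$.

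The main obstacle is producing the explicit block matrix in precisely the stated basis rather than merely up to a $\z[\z_m]$-conjugation; this requires aligning a consistent cyclic labelling of the $1$-blocks of $w^{n/d}$ with the splitting $N_w\cong \z[\z_m]^{\iota(w)-1}\oplus I_{\z_m}$, and choosing the companion-matrix basis on the augmentation-ideal summand with care. A secondary check, which is exactly the content of the equivariant refinement invoked in the first step, is that the summand decomposition inside each orbit is compatible with the full $\z_n$-action, not merely with the stabilizer's action.
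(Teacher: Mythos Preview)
Your overall strategy matches the paper's: invoke the $\z_n$-equivariant stable splitting, parametrize orbits of the index sets $I$ by Lyndon words $w$ of length $d\mid n$, identify the stabilizer as $\z_{n/d}$, and apply the ``permuted summands $\Rightarrow$ induced module'' principle (the paper quotes this as Brown's Proposition~5.3). The case $d=n$ is handled identically.

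The one substantive difference is in the computation of the matrix for $1<d<n$. The paper writes down an explicit geometric basis for $N_w$ consisting of the ``edges'' $(v_j^i,v_{j+1}^i)$ between consecutive components of $|\K_I|$ and then asserts that $\sigma^d$ acts by the displayed matrix in that basis. Your route is more structural: you identify $N_w$ as the kernel of the total augmentation on the permutation module $\z[\z_m]^{\iota(w)}$ and, after a $\z[\z_m]$-linear change of coordinates, split it as $\z[\z_m]^{\iota(w)-1}\oplus I_{\z_m}$. That yields a genuinely \emph{block-diagonal} matrix (cyclic shifts plus the companion matrix of $1+x+\cdots+x^{m-1}$), which is conjugate to, but not equal to, the matrix in the statement---the latter has its entire last column equal to $-1$ and is therefore not block-diagonal. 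You correctly flag this discrepancy as the ``main obstacle.'' In short, your argument pins down $N_w$ up to $\z[\z_{n/d}]$-isomorphism and explains \emph{why} it has the claimed structure, whereas the paper's choice of basis is what is needed to recover the displayed matrix verbatim; to finish your version one would either exhibit the explicit conjugating matrix or revert to the paper's edge basis.
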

In the last chapter, we focus on the homotopy orbit space of $\Z_{\K_n}(D^1,S^0)$ which is defined as $E\z_n\times_{\z_n}\Z_{\K_n}(D^1,S^0)$. We show that the Leray-Serre spectral sequence of this space collapses at the $E^2$-page (~\cref{e2page}) and we also give a complete description of all the non-zero terms of the $E^2$-page~( \cref{fig:e2page}).

\begin{propositioncopy}[\ref{e2page}]
The Leray-Serre spectral sequence of the fibration $X\to EG\times_G X\to BG$ collapses at the $E^2$-page.
\end{propositioncopy}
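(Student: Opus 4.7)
The plan is to identify the $E_2$ page as a group-cohomology term, use that the paper works over a field $k$ with $\mathrm{char}(k)=0$ and $G=\z_n$ finite (so $|G|$ is a unit in $k$) to see that $E_2$ is concentrated in the single column $p=0$, and then observe that every higher differential is zero for purely bidegree reasons.

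First, I would recall that the Borel fibration $X\to EG\times_G X\to BG$ has Leray--Serre $E_2$-page
\[
E_2^{p,q}\;\cong\;H^p\bigl(BG;\mathcal{H}^q(X;k)\bigr)\;\cong\;H^p\bigl(G;\,H^q(X;k)\bigr),
\]
where $H^q(X;k)$ is given the $k[G]$-module structure induced from the $G$-action on $X$. The second isomorphism uses that $BG=K(G,1)$, so cohomology of $BG$ with a local coefficient system agrees with ordinary group cohomology.

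Next, since $|G|=n$ is a unit in $k$, Maschke's theorem implies that $k[G]$ is semisimple and in particular the trivial module $k$ is projective over $k[G]$; equivalently, the averaging operator $\tfrac{1}{n}\sum_{g\in G}g$ splits the augmentation. Therefore $H^p(G;M)=0$ for every $p>0$ and every $k[G]$-module $M$. Applying this to $M=H^q(X;k)$ yields $E_2^{p,q}=0$ whenever $p>0$, so the $E_2$-page is concentrated in the column $p=0$.

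Finally, the differential $d_r\colon E_r^{p,q}\to E_r^{p+r,q-r+1}$ has bidegree $(r,1-r)$ with $r\ge 2$. Either $p\ge 1$, in which case the source is a subquotient of $E_2^{p,q}=0$ and hence vanishes; or $p=0$, in which case the target has horizontal coordinate $p+r=r\ge 2$ and is likewise zero. Thus every $d_r$ with $r\ge 2$ is identically zero and the spectral sequence collapses at $E_2$. The only subtle point --- more a hypothesis check than a genuine obstacle --- is the standing coefficient convention: the statement fails in characteristics dividing $|G|$, so the assumption $\mathrm{char}(k)=0$ (equivalently that $|G|$ be invertible in $k$) is essential.
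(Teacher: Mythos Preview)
Your argument is correct \emph{over a field in which $|G|$ is invertible}, but that is not the setting of Proposition~\ref{e2page}. The paper's entire discussion leading up to this proposition, including the explicit $E^2$-pages displayed for $n=6,8$ and the general description in Figure~\ref{fig:e2page}, is over the integers $\z$. With $\z$-coefficients the columns $p>0$ are far from zero: for instance $E^2_{2k+1,0}\cong E^2_{2k+1,2}\cong\z_n$ and $E^2_{2k,1}$ is a direct sum of groups $\z_{n/d}$. The symbol $k$ in the List of Symbols refers only to the later Hilbert--Poincar\'e section, which explicitly reintroduces field coefficients; Proposition~\ref{e2page} is stated and proved before that switch.

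The paper's actual proof is substantially harder precisely because of this. It first uses the full $\z[\z_n]$-module decomposition of $H_1(X)$ from Theorem~\ref{module_structure}, together with Shapiro's lemma and an explicit short exact sequence, to prove the key vanishing $E^2_{1,1}=H_1(\z_n;H_1(X))=0$. From the periodicity of group homology of $\z_n$ this forces all odd columns of the $q=1$ row to vanish, which kills the $d^2$'s landing in or leaving that row. The remaining differentials are handled by two further observations: the $\z_n$-action on $X$ has fixed points, giving a cross-section $BG\to EG\times_G X$ that forces all differentials out of the $q=0$ row to vanish; and the only surviving candidate $d^3\colon E^3_{3,0}\to E^3_{0,2}$ goes from a torsion group into $\z$, hence is zero. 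None of this machinery is needed over a field of characteristic prime to $n$, but the integral statement genuinely requires it.
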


\chapter{Hypercube Graph and Real Moment-angle Complex}
\label{chap:chapter-2}

\section*{Introduction}
In graph theory, the hypercube graph is defined as the 1-skeleton of the $n$-dimensional cube. The graph theoretical properties of this graph has been studied extensively by Harary et al in \cite {Harary}. It is well known that this graph has genus $1+(n-4)2^{n-3}$. This fact was proved by Ringel in \cite{Ringel}, Beineke and Harary in \cite{MR0175805}. The moment-angle complex or polyhedral product has been studied recently in the works of Buchstaber and Panov \cite{BP}, Denham and Suciu \cite{DS}, Bahri et al. \cite{BBCG}. 
In this paper, we give an embedding of the hypercube graph in the real moment-angle complex and calculate the genus of the hypercube graph. This demonstrates an interesting relationship between the geometry of hypercube graph and real moment-angle complex.

\section{Genus of a Graph}

\begin{definition}
The \textit{hypercube graph} $Q_n$ for $n\geq 1$ is defined with the following vertex and edge sets.
\begin{align*}
V &= \{(a_1,\cdots, a_n)\ |\ a_i = 0 \text{ or } 1\}\\ 
&= \textrm{  the set of all ordered binary } n\textrm{-tuples with entries of 0 and 1 }\\
E &= \{\text{unordered pair }(u,v)\in V\times V\ |\ u \text{ and } v \text{ differ at exactly one place}\}
\end{align*}

\end{definition}
It is straightforward to see that the hypercube graph can also be defined recursively as a cartesian product \cite[p.~22]{HararyBook}.
$$
Q_1 = K_2, \quad Q_n = K_2 \square Q_{n-1}.
$$
Now we will define the genus of a graph. In this paper, a `surface' will mean a closed compact orientable manifold of dimension of 2. A graph embedding in a surface means a continuous one to one mapping from the topological representation of the graph into the surface. More explanation about graph embeddings can be found at \cite{TopGraph}. 

\begin{definition}
The \emph{genus} $\gamma(G)$ of a graph $G$ is the minimal integer $n$ such that the graph can be embedded in the surface of genus $n$. In other words, it is the minimum number of handles which needs to be added to the 2-sphere such that the graph can be embedded in the surface without any edges crossing each other.
\begin{figure}
\centering
\includegraphics[scale=0.6]{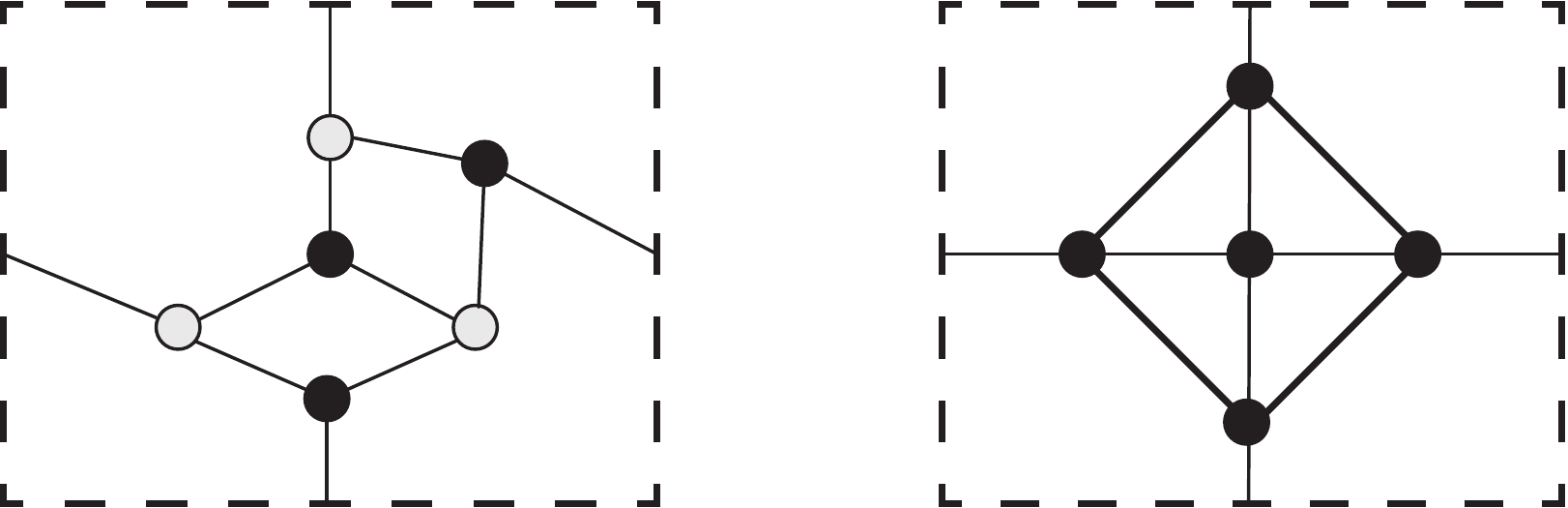}
\caption{$K_{3,3}$ and $K_5$ embedded in a torus.}\label{toroidal}
\end{figure}

\begin{example}
All planar graphs have genus 0. The complete graph with 5 vertices denoted by $K_5$ and the complete bipartite graph with 6 vertices denoted by $K_{3,3}$ both have genus 1 ( Figure~\ref{toroidal}). The non-planarity of these graphs denoted by $K_n$ and $K_{m,n}$ are explained in \cite[chapter~6]{West}. 
\end{example}

\end{definition}

\begin{definition}[2-cell embedding]
Assume that G is a graph embedded in a surface. Each region of the complement of the graph is called a face. If each face is homeomorphic to an open disk, this embedding is called a 2-cell embedding. 
\end{definition}
In this paper we will restrict our attention to 2-cell embeddings of graphs because the embedding of the hypercube graph in a real moment-angle complex is a 2-cell embedding. We describe it in the next section. 

Now restricting our attention to 2-cell embeddings of a graph $G$ in a surface with genus $g$, we can see that  $$\gamma_M(G) = \max \{g\ |\ G \textrm{ has a 2-cell embedding on a surface
with genus }g\}$$ must exist. This is true because if a graph has a 2-cell embedding in a surface $S_g$, then each handle of the surface must contain at least one edge. So we have a loose upper bound of  $\gamma_M(G)\leq e$ (See \cite{perez} for further explanation). So we can define the maximum genus of a finite connected graph as follows.
\begin{definition}[Maximum genus]
The maximum genus $\gamma_M(G)$ of a connected finite graph $G$ is the maximal integer $m$ such that $G$ has a 2-cell embedding on the surface of genus $m$.
\end{definition}

Two theorems which are important tools in the analysis of graph embeddings follow next.
\begin{theorem}[Euler's Formula] 
Let a graph $G$ has a 2-cell embedding in the surface $S_g$ of genus $g$, with the usual parameter $V, E, F$. Then
\begin{equation}
|V|-|E|+|F|=2-2g
\end{equation}
\end{theorem}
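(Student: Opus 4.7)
The plan is to deduce the formula from the topological invariance of the Euler characteristic of a CW complex. A 2-cell embedding of a graph $G$ into the surface $S_g$ is by definition a decomposition of $S_g$ as a CW complex whose 0-cells are the vertices of $G$, whose 1-cells are the (open) edges, and whose 2-cells are the face regions. Thus first I would verify that this really does define a regular CW structure on $S_g$ (the attaching maps of the edges use the incidence data of $G$, and the hypothesis that every face is an open disk guarantees that the 2-cells are attached by their boundary circles). Once this is in place, the chain complex of this CW structure has exactly $|V|$ generators in degree $0$, $|E|$ in degree $1$, and $|F|$ in degree $2$, so its Euler characteristic equals $|V|-|E|+|F|$.

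Next I would invoke the standard fact that the Euler characteristic depends only on the underlying space and not on the chosen CW structure (for example, because $\chi$ can be recovered as the alternating sum of Betti numbers of singular homology). This reduces the problem to computing $\chi(S_g)=2-2g$ once and for all. I would give this computation using the standard model of $S_g$ as a $4g$-gon with sides identified by the word $a_1 b_1 a_1^{-1}b_1^{-1}\cdots a_g b_g a_g^{-1}b_g^{-1}$, which has a CW structure with one $0$-cell, $2g$ one-cells, and one $2$-cell, yielding $\chi=1-2g+1=2-2g$.

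If one prefers a purely combinatorial route, the alternative I would sketch proceeds by double induction. First, establish the planar case $g=0$ by induction on $|E|$: the base case is a single vertex with $V=1,E=0,F=1$; for the inductive step, either removing a pendant edge drops both $V$ and $E$ by $1$ and leaves $F$ fixed, or removing an edge on a cycle drops $E$ and $F$ each by $1$, so $V-E+F$ is preserved. For $g\ge 1$, one argues that a 2-cell embedding must contain a non-separating simple closed curve along graph edges, and cutting along such a curve and capping the two resulting boundary circles by disks produces a 2-cell embedding on $S_{g-1}$ in which $F$ has increased by $2$; induction then gives the formula with the correct shift $2-2g$.

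The main obstacle in either route is essentially the same topological input: one has to know that cutting a handle changes $\chi$ by exactly $-2$, equivalently that $\chi(S_g)=2-2g$. In the CW-invariance approach this is packaged as a one-line computation on the polygonal model, while in the combinatorial approach it reappears as the claim that a 2-cell embedding on positive-genus surface always admits a non-separating curve along edges; I would highlight that this latter claim is the nontrivial step and is usually proved via the classification of surfaces or, more concretely, by observing that if no such edge-curve existed then the graph complement would disconnect the surface and the CW decomposition would force $g=0$.
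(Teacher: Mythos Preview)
Your proposal is correct; both the CW-complex argument via invariance of the Euler characteristic and the combinatorial induction are standard and valid proofs of this classical result. The paper itself does not give a proof at all but simply refers the reader to Chapter~3 of Gross and Tucker's \emph{Topological Graph Theory}, so you have actually supplied considerably more detail than the paper does. If anything, your first route (CW structure plus $\chi(S_g)=2-2g$ from the polygon model) is the cleanest and is essentially what one finds in the cited reference; your second, more combinatorial route is also fine, though as you correctly flag, the existence of a non-separating edge-cycle on a positive-genus surface is the one step that needs genuine topological input.
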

\begin{proof}
See \cite[Chapter~3]{TopGraph}
\end{proof}

\begin{theorem}\emph{\cite{Duke}}
A graph $G$ has a 2-cell embedding in a surface $S_g$ of genus g if and only if $\gamma(G)\leq g \leq \gamma_M(G)$.
\end{theorem}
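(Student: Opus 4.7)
The plan is to handle the two directions separately, with the forward implication being immediate and the reverse implication being Duke's classical interpolation argument. The forward direction is essentially built into the definitions: if $G$ has a 2-cell embedding in $S_g$, then $g$ is one of the integers over which $\gamma(G)$ is taken as a minimum and $\gamma_M(G)$ as a maximum, so $\gamma(G) \leq g \leq \gamma_M(G)$.

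For the reverse direction, I would use the combinatorial description of 2-cell embeddings via rotation systems. A 2-cell embedding of $G$ in an orientable surface is determined (up to orientation-preserving homeomorphism) by a rotation system $\pi$, namely a cyclic ordering of the edges incident to each vertex. Given $\pi$, the boundary cycles of the faces are read off by the standard face-tracing procedure, producing a face count $F(\pi)$, and by Euler's formula the genus of the embedding is
\begin{equation}
g(\pi) = \frac{1}{2}\bigl(2 - |V| + |E| - F(\pi)\bigr).
\end{equation}
Thus it is enough to show that as $\pi$ ranges over all rotation systems of $G$, the integer $g(\pi)$ attains every value between $\gamma(G)$ and $\gamma_M(G)$.

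The key step is the following local move lemma: if $\pi'$ is obtained from $\pi$ by transposing two consecutive edges in the cyclic order at a single vertex, then $|F(\pi) - F(\pi')| \in \{0,2\}$, equivalently $|g(\pi') - g(\pi)| \leq 1$. This is verified by case analysis on whether the two edges being swapped lie on the same face cycle or on two different face cycles; a single adjacent transposition either merges two face cycles into one, splits one face cycle into two, or leaves the face count unchanged. Granting this, I would pick rotation systems $\pi_{\min}$ and $\pi_{\max}$ realizing the minimum and maximum genus, connect them by a finite sequence $\pi_{\min} = \pi_0, \pi_1, \ldots, \pi_N = \pi_{\max}$ of adjacent transpositions (possible because the symmetric group at each vertex is generated by adjacent transpositions), and apply the discrete intermediate value theorem to the integer-valued function $i \mapsto g(\pi_i)$: since consecutive values differ by at most $1$, every integer between $g(\pi_0) = \gamma(G)$ and $g(\pi_N) = \gamma_M(G)$ is hit by some $g(\pi_i)$, producing the desired 2-cell embedding in $S_g$.

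The main obstacle is the local move lemma, which requires a careful bookkeeping of how the face-tracing permutation changes under a single transposition of adjacent darts at a vertex; everything else is formal once that lemma is in hand. Since the statement is attributed to \cite{Duke}, I would likely present the lemma and the intermediate value argument as a sketch and refer the reader to Duke's paper for the detailed face-tracing computation.
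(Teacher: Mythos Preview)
Your sketch is correct and is precisely Duke's interpolation argument via rotation systems; the paper itself does not supply a proof at all but simply refers the reader to \cite{Duke}, so you have in fact done more than the paper does. One small point: your forward direction quietly assumes that the minimum-genus embedding is itself $2$-cell (otherwise $\gamma(G)$ need not be among the values $g(\pi)$), which is a standard fact (Youngs) but worth stating if you write this out in full.
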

The last theorem tells us that if there exist 2-cell embeddings of a graph in surfaces of genera $m$ and $n$ with $m\leq n$, then for any integer $k$ with $m\leq k \leq n$, there exists a 2-cell embedding of the graph in a surface with genus $k$. A detailed explanation and proof of this theorem can be found in Richard A. Duke's original paper \cite{Duke}. 

Using these theorems, we can find a lower bound for the genus of the hypercube graph. Let a graph $G$ is embedded in a surface and $f_i$ denote the number of faces which has $i$ edges as its boundary. So we have $$
2|E| = \sum_{i} if_i
$$
For the hypercube graph, each face must have at least 4 edges as its boundary. Therefore, $$
2|E|= \sum_{i\geq4}if_i\geq \sum_{i} 4 f_i=4|F|
$$
which implies $|F|\leq\frac{|E|}{2}$. Now using Euler's formula,
\begin{align*}
g=\ & 1-\frac{|V|}{2}+\frac{|E|}{2}-\frac{|F|}{2}\\
&\geq1-\frac{|V|}{2}+\frac{|E|}{2}-\frac{|E|}{4} = 1-\frac{|V|}{2}+\frac{|E|}{4}
\end{align*}
But for the hypercube graph $Q_n$, we have $|V|=2^n, |E|=n2^{n-1}$. So using the above inequality we get a lower bound\footnote{See \cite{HararyIneq} for more detail on the inequalities involving genus of a graph} for the genus of a hypercube graph,
\begin{equation}
\gamma(Q_n)\geq 1- 2^{n-1}+n2^{n-3}= 1+(n-4)2^{n-3}
\end{equation}

To show that this lower bound can be achieved, we will use the real moment-angle complex. In fact, we prove the following theorem.
\begin{theorem}\label{main}
For $n\geq 3$, the hypercube graph can be embedded in a surface with genus $1+(n-4)2^{n-3}$. Moreover, this embedding is a 2-cell embedding.
\end{theorem}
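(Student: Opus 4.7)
The strategy is to exhibit an explicit embedding of $Q_n$ into $\M$ as its $1$-skeleton, show that $\M$ itself is a closed, compact, orientable surface of genus exactly $1+(n-4)2^{n-3}$, and observe that this embedding is by construction a $2$-cell embedding; combined with the lower bound $\gamma(Q_n)\geq 1+(n-4)2^{n-3}$ derived earlier in the section from Euler's formula, this gives the theorem.

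First I would describe the cell structure of $\M\subset (D^1)^n$ directly from the definition of the polyhedral product. Its $0$-cells are the $2^n$ points of $\{\pm 1\}^n$, which we identify with the vertex set of $Q_n$. The $1$-cells, coming from the vertex simplices $\{i\}\in\K_n$, are arcs varying a single coordinate $x_i$ while the other coordinates are fixed in $\{\pm 1\}$; these match the edges of $Q_n$ exactly. Finally, the $2$-cells are the $n\cdot 2^{n-2}$ squares $D(\{i,i+1\})$ indexed by an edge of $\K_n$ together with a sign vector for the remaining $n-2$ coordinates. I would then check that $\M$ is a closed $2$-manifold by a local computation: every $1$-cell is contained in exactly two $2$-cells (the squares from the two edges of $\K_n$ incident to the vertex $\{i\}$, with the off-coordinates forced by the sign pattern of the edge), and at every vertex the $n$ incident squares are arranged cyclically, so the link is an $n$-gon homeomorphic to $S^1$.

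For orientability I would assign to each square a product orientation $s(i,\epsilon)\,dx_i\wedge dx_{i+1}$ with a sign $s(i,\epsilon)\in\{\pm 1\}$ depending on the edge $\{i,i+1\}$ of $\K_n$ and the sign vector $\epsilon\in\{\pm 1\}^{n-2}$, and show these can be chosen so that the induced boundary orientations cancel along every shared edge. Granting orientability, the Euler characteristic can be read off directly:
$$\chi(\M)=2^n-n\cdot 2^{n-1}+n\cdot 2^{n-2}=2^{n-2}(4-n)=2-2g,$$
so $g=1+(n-4)2^{n-3}$, matching the lower bound; and the embedding is $2$-cell by construction since each face is an open square.

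The main obstacle is the orientability step: the local condition at each individual edge is trivial, but global consistency amounts to showing that the sign-propagation around every closed loop in the dual $2$-complex of $\M$ is trivial. A clean alternative that bypasses explicit sign bookkeeping is to compute $H_2(\M;\z)$ via the cellular chain complex and exhibit a nonzero fundamental cycle, which for a closed connected $2$-manifold is equivalent to orientability. The base case $n=4$ is transparent, since $\K_4=S^0\star S^0$ gives $\M\cong S^1\times S^1$ via the product formula for polyhedral products over joins, providing both a sanity check on the genus formula and a starting point for any inductive sign-propagation argument.
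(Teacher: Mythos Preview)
Your approach is correct and reaches the same conclusion, but the route differs from the paper's. You argue directly from the cell structure: verify that $\M$ is a closed $2$-manifold by checking that every $1$-cell lies in exactly two squares and every vertex link is an $n$-gon, then read off the genus from the Euler characteristic $\chi=2^n-n\cdot2^{n-1}+n\cdot2^{n-2}$. The paper instead proves that $\M$ is an orientable surface of the stated genus by induction on $n$, exhibiting an explicit connected-sum decomposition
\[
\Z_{\K_{n+1}}(D^1,S^0)\;\cong\;\Z_{\K_n}(D^1,S^0)\ \#\ \Z_{\K_n}(D^1,S^0)\ \#\ (2^{n-2}-1)\,(S^1\times S^1),
\]
starting from $\Z_{\K_3}(D^1,S^0)=\partial(D^1)^3\cong S^2$. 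This inductive construction gives orientability for free (connected sums of orientable surfaces are orientable), whereas in your argument orientability is, as you correctly flag, the one step requiring real work. Your suggested sign rule $s(i,\epsilon)$ can in fact be made explicit: orient each square by $(-1)^{\sum_{j\notin\{i,i+1\}}\epsilon_j}\,dx_i\wedge dx_{i+1}$ (the paper uses essentially this ``even/odd square'' convention later when analyzing the $\z_n$-quotient). What your direct approach buys is an immediate cell count and a clear view of the $2$-cell embedding; what the paper's inductive approach buys is a structural description of $\M$ as an iterated connected sum, which is of independent interest and sidesteps the orientation bookkeeping entirely.
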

\section{Moment-Angle Complex and Hypercube Graph}
\begin{definition}
Let $(X,A)$ be a pair of topological spaces and $\K$ be a finite simplicial complex on a set $[m]=\{1,\cdots,m\}$. For each face $\sigma\in\K$, define $$
(X,A)^\sigma = Y_1\times \cdots \times Y_m
$$
where $$
Y_i = \begin{cases}
X & \mathrm{if }\quad i\in \sigma\\
A & \mathrm{if }\quad i\notin \sigma
\end{cases}
$$
The moment-angle complex $\Z_\K(X,A)$ corresponding to pair $(X,A)$ and simplicial complex $\K$ is defines as the following subspace of the cartesian product $X^m$.
$$
\Z_\K(X,A)=\bigcup_{\sigma\in \K} (X,A)^\sigma
$$
\end{definition}
For our calculation we will use the pair $(D^1,S^0)$. This space, $\Z_\K(D^1,S^0)$ is called the real moment-angle complex corresponding to $\K$.
\begin{example}
Let $\Ll_n$ denote the simplicial complex with $n$ discrete points. Then by the above definition
\begin{gather*}
\Z_{\Ll_n}(D^1,S^0) = (D^1\times S^0\times \cdots \times S^0)\cup (S^0\times D^1\times\\
 \cdots \times S^0)\cup \cdots\cup (S^0\times S^0\times \cdots \times D^1) 
\end{gather*}
It is easy to see that $\Z_{\Ll_n}(D^1,S^0)$ is homeomorphic to of the hypercube graph $Q_n$.
\end{example}
From the definition of the moment-angle complex, we can prove the following lemma.
\begin{lemma}
Let $f:\Ll\hookrightarrow \K$ be an inclusion map of simplicial complex where $\Ll$ and $\K$ both have the same number of vertices. Then there exists an inclusion map of moment-angle complexes, $\Z_f: \Z_{\Ll}(X,A)\hookrightarrow \Z_\K(X,A)$.
\begin{proof}
We can consider $\Ll$ as a subcomplex of $\K$. So any face $\sigma$ of $\Ll$ is also a face of $\K$. From this we can conclude that $$(X,A)^\sigma\subset \bigcup_{\tau\in\K}(X,A)^\tau$$
This implies that $\Z_{\Ll}(X,A)\subset\Z_K(X,A)$.
\end{proof}
\end{lemma}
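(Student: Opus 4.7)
The plan is to realize both polyhedral products as subspaces of the common ambient product $X^m$ and then read off the inclusion directly from the defining unions. Since $\Ll$ and $\K$ share the same vertex set $[m]$, for every simplex $\sigma$ the face product $(X,A)^{\sigma} = Y_1\times\cdots\times Y_m$ is the same subspace of $X^m$ regardless of whether we view $\sigma$ as a face of $\Ll$ or of $\K$, so there is no ambiguity about which product the two spaces sit inside.

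First I would spell out what $f\colon \Ll\hookrightarrow\K$ means combinatorially: every simplex $\sigma\in\Ll$ is a simplex of $\K$. From the definition of the polyhedral product this immediately gives $(X,A)^{\sigma}\subseteq \Z_\K(X,A)$ for every such $\sigma$. Taking the union over $\sigma\in\Ll$ on both sides and using that $\Ll$ is a sub-indexing set of $\K$, one gets the set-theoretic inclusion
\[
\Z_{\Ll}(X,A) \;=\; \bigcup_{\sigma\in \Ll}(X,A)^{\sigma} \;\subseteq\; \bigcup_{\tau\in\K}(X,A)^{\tau} \;=\; \Z_{\K}(X,A),
\]
and the desired map $\Z_f$ is simply the restriction of the identity map of $X^m$ to this subspace. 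Continuity and injectivity of $\Z_f$ are then inherited from the identity of $X^m$, and $\Z_f$ is a closed embedding because each $(X,A)^\sigma$ is closed in $X^m$ when $A\subseteq X$ is closed.

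The only subtlety worth flagging is the hypothesis that $\Ll$ and $\K$ have the same number of vertices. Without it, the two polyhedral products would naturally live in different ambient products $X^{|V(\Ll)|}$ and $X^{|V(\K)|}$, and to produce a map one would have to pad the missing coordinates with basepoints, i.e.\ go through the categorical definition via the functor $D$ on $\textsc{cat}(\K)$. Under the stated hypothesis this padding step disappears entirely, and I do not expect any genuine obstacle beyond the indexing-set bookkeeping above.
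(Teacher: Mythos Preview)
Your argument is correct and is essentially identical to the paper's own proof: both identify $\Ll$ as a subcomplex of $\K$ on the same vertex set, observe that each $(X,A)^\sigma$ for $\sigma\in\Ll$ already appears in the union defining $\Z_\K(X,A)$, and conclude the containment $\Z_\Ll(X,A)\subseteq\Z_\K(X,A)$. Your added remarks on continuity, injectivity, and the role of the equal-vertex hypothesis are sound elaborations but go slightly beyond what the paper records.
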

\begin{example}
Let $\K_n$ be the boundary of an $n$-gon and $\Ll_n$ be the $n$ vertices of $\K_n$. Using the above lemma, we can conclude that $\Z_{\Ll_n}(D^1,S^0)=Q_n$ is embedded in $\Z_{K_n}(D^1,S^0)$. Also, if we consider the complement of $\Z_{\Ll_n}(D^1,S^0)$ in $\Z_{K_n}(D^1,S^0)$, we will get a collection of open discs $(D^1\times D^1)^\mathrm{o}$ which is straightforward from the definitions. So, this embedding of $Q_n$ in $\Z_{K_n}(D^1,S^0)$ is clearly a 2-cell embedding.
\end{example}
It is interesting to note that $\Z_{K_n}(D^1,S^0)$ is a closed compact surface with genus $1+(n-4)2^{n-3}$. This fact was proved by Coxeter in \cite{Cox}. We will give an inductive proof here.
\begin{proposition}
For $n\geq 3$, $\Z_{K_n}(D^1,S^0)$ is a closed, compact and orientable surface with genus $1+(n-4)2^{n-3}$.
\begin{proof}
For brevity, we write $\Z_{\K_n}$ to denote $\Z_{\K_n}(D^1,S^0)$.\\
If $n = 3$, it is straightforward that $\Z_{\K_n}=\partial(D^1\times D^1\times D^1)\approx S^2$. 
Let's assume the statement is true for an integer $n\geq 3$. So $\mathcal Z_{\K_n}$ is an orientable surface of genus $1+(n-4)2^{n-3}$. Also note that

\begin{align*}
\Z_{\K_n} = &\ \ \overbrace {D^1\times D^1\times S^0\times \cdots \cdots \cdots\times S^0}^{n\ \mathrm{ factors}} \\
      & \cup S^0\times D^1\times D^1\times S^0\times \cdots \times S^0 \\
      & \ \ \vdots\\
      & \cup S^0\times \cdots \cdots \cdots \times S^0\times   D^1\times D^1 \\
      & \cup D^1\times S^0 \cdots \cdots \cdots \cdots \times   S^0\times D^1 \\
\end{align*}

Let $B$ be the last term in the union that is $ B =  D^1\times S^0 \times \cdots \times   S^0\times D^1 \subset \Z_{\K_n}$. So $B$ is $2^{n-2}$ copies of $D^1\times D^1$ on the surface $\Z_{\K_n}$. Now note that, $$\partial (B) = (S^0\times S^0 \times\cdots \times   S^0\times D^1) \cup (D^1\times S^0 \times\cdots \times   S^0\times S^0) $$ and
$$
\Z_{\K_{n+1}} = ((\Z_{\K_n}-B)\times S^0)\cup (\partial B\times D^1) 
$$

This means that to construct $\Z_{\K_{n+1}}$, we first delete $2^{n-2}$ copies of $D^1\times D^1$ from $\Z_{\K_n}$, then take two copies of $\Z_{\K_n}-B$ and glue $2^{n-2}$ copies of 1-handle along the boundary of $B$. Therefore,
$$
\Z_{\K_{n+1}} = \Z_{\K_n}\#\Z_{\K_n}\#(2^{n-2}-1) S^1\times S^1 
$$

Here one of the $2^{n-2}$ handles is being used to construct the first connected sum $\Z_{\K_n}\#\Z_{\K_n}$ and the remaining $2^{n-2}-1$ copies of 1-handles are connected as $2^{n-2}-1$ copies of torus. So clearly $\Z_{\K_{n+1}}$ is a closed compact orientable surface with genus $$2(1+(n-4)2^{n-3})+2^{n-2}-1= 1+((n+1)-4)2^{(n+1)-3}.$$ 
\end{proof}
\end{proposition}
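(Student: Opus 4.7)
The plan is to prove this by induction on $n$, using the recursive decomposition suggested by removing one of the edges from the boundary polygon. The base case $n=3$ is direct: $\K_3$ is the boundary of a triangle (three vertices, three edges), so the polyhedral product is the union of the three 2-faces $(D^1,S^0)^\sigma$ for $\sigma$ an edge, which is precisely the topological boundary of the 3-cube $\partial(D^1\times D^1\times D^1)\cong S^2$, a genus $0$ closed orientable surface, matching $1+(3-4)2^{3-3}=0$.

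For the inductive step, assume $\Z_{\K_n}$ is a closed compact orientable surface of genus $g_n=1+(n-4)2^{n-3}$. I will decompose $\Z_{\K_{n+1}}\subset (D^1)^{n+1}$ according to the value of the last coordinate. Writing $B:=D^1\times S^0\times\cdots\times S^0\times D^1\subset\Z_{\K_n}$ for the piece of $\Z_{\K_n}$ coming from the edge $\{n,1\}$ (a disjoint union of $2^{n-2}$ copies of $D^1\times D^1$), I will show that slicing over the $(n+1)$-th factor gives
\[
\Z_{\K_{n+1}} \;=\; \bigl((\Z_{\K_n}\setminus \mathrm{int}(B))\times S^0\bigr) \;\cup\; \bigl(\partial B\times D^1\bigr).
\]
The first piece comes from simplices of $\K_{n+1}$ not containing $n+1$ (which are exactly the simplices of the path $1{-}2{-}\cdots{-}n$, i.e.\ $\K_n$ minus the edge $\{n,1\}$), and the second piece comes from the three simplices $\{n+1\}$, $\{n,n+1\}$, $\{n+1,1\}$ containing the new vertex. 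Checking that the union of these in the first $n$ coordinates is exactly $\partial B=(S^0\times\cdots\times S^0\times D^1)\cup(D^1\times S^0\times\cdots\times S^0)$ is a straightforward unpacking of the polyhedral product definition.

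Topologically, this decomposition says: take two copies of $\Z_{\K_n}$ (one for each value of the $(n+1)$-th coordinate in $S^0$), remove from each the $2^{n-2}$ open disks $\mathrm{int}(B)$, and glue the resulting boundary circles by $2^{n-2}$ cylinders $S^1\times D^1$. Using the first cylinder to attach the two copies yields $\Z_{\K_n}\#\Z_{\K_n}$ of genus $2g_n$; each of the remaining $2^{n-2}-1$ cylinders then adds a handle (a torus summand), giving
\[
\Z_{\K_{n+1}} \;\cong\; \Z_{\K_n}\#\Z_{\K_n}\#(2^{n-2}-1)(S^1\times S^1),
\]
a closed compact orientable surface of genus $2g_n+2^{n-2}-1 = 2(1+(n-4)2^{n-3})+2^{n-2}-1 = 1+((n+1)-4)2^{(n+1)-3}$, completing the induction. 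Orientability is preserved because we are attaching orientable pieces along circles and the gluings of cylinder ends to oriented boundary circles can be chosen compatibly.

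The main obstacle is justifying the handle–decomposition interpretation: one must check that $\Z_{\K_n}\setminus\mathrm{int}(B)$ really is a genus-$g_n$ surface with $2^{n-2}$ boundary circles (equivalently, that $B$ sits in $\Z_{\K_n}$ as $2^{n-2}$ disjoint embedded 2-disks whose boundaries are disjoint circles), and that attaching $k$ cylinders between two oriented surfaces with boundary produces a connected sum plus $k-1$ handles. The first point follows because each $D^1\times D^1$ component of $B$ is one of the ``square faces'' $(D^1,S^0)^{\{n,1\}}$ in the polyhedral product, which is a closed 2-disk attached to the rest of $\Z_{\K_n}$ exactly along its boundary square $\partial B$; the second point is a standard computation in surface topology that follows from Euler characteristic additivity $\chi(\Z_{\K_{n+1}})=2(\chi(\Z_{\K_n})-2^{n-2})+0$. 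Once these are in hand, the genus formula falls out by direct arithmetic.
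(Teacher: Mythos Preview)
Your proof is correct and follows essentially the same approach as the paper's: induction on $n$ with base case $\Z_{\K_3}=\partial(D^1)^3\cong S^2$, the same decomposition $\Z_{\K_{n+1}}=((\Z_{\K_n}\setminus\mathrm{int}(B))\times S^0)\cup(\partial B\times D^1)$ with $B=D^1\times S^0\times\cdots\times S^0\times D^1$, and the same handle/connected-sum interpretation $\Z_{\K_{n+1}}\cong\Z_{\K_n}\#\Z_{\K_n}\#(2^{n-2}-1)(S^1\times S^1)$. Your write-up is slightly more explicit in identifying which simplices of $\K_{n+1}$ contribute to each piece and in flagging the Euler-characteristic check, but the argument is the same.
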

In the above discussion, we have proved that the hypercube graph $Q_n$ can be embedded in the real moment-angle complex $\Z_{\K_n}(D^1,S^0)$ which is a surface of genus $1+(n-4)2^{n-3}$. Hence Theorem \ref{main} is proved. 

\subsection{Surface Embedding of Moment-Angle Complex}
Using the idea above, we can answer a natural question: what kind of moment-angle complexes can be embedded in a closed, compact and orientable surface? Let $\K$ be a finite simplicial complex on the vertex set $[n]=\{1,\cdots,n\}$. Since we want to embed $\Z_{\K}(D^1,S^0)$ in a surface, one preliminary investigation shows that $\K$ cannot have any maximal faces which contain more than two vertices. Hence, $\K$ must be a subcomplex of the boundary of an $n$-gon. Now we can have a family of moment-angle complexes which can be embedded in a surface with genus $1+(n-4)2^{n-3}$. More precisely, we can have the following proposition.
\begin{proposition}\label{prop:surface}
Let $\K$ be a subcomplex of the boundary of an $n$-gon and assume that each of the $n$ vertices are contained in $\K$.Then $\Z_{\K}(D^1,S^0)$ can be embedded in a closed, compact and orientable surface with minimal genus $1+(n-4)2^{n-3}.$
\end{proposition}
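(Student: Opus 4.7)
The plan is to sandwich $\Z_{\K}(D^1,S^0)$ between two complexes whose properties have already been established: the hypercube $Q_n = \Z_{\Ll_n}(D^1,S^0)$ on the ``small'' end, and the surface $\Z_{\K_n}(D^1,S^0)$ on the ``large'' end. Since by assumption $\Ll_n \subseteq \K \subseteq \K_n$ (all three having the same vertex set $[n]$), the inclusion lemma above gives a chain of embeddings
\[
Q_n \;=\; \Z_{\Ll_n}(D^1,S^0) \;\hookrightarrow\; \Z_{\K}(D^1,S^0) \;\hookrightarrow\; \Z_{\K_n}(D^1,S^0).
\]

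For the existence half of the statement, I would simply invoke the proposition already proved, namely that $\Z_{\K_n}(D^1,S^0)$ is itself a closed, compact, orientable surface of genus $1+(n-4)2^{n-3}$. Composing with the second inclusion above then embeds $\Z_{\K}(D^1,S^0)$ into this surface, giving an embedding of the desired genus.

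For the minimality half, suppose $\Z_{\K}(D^1,S^0)$ embeds in some closed, compact, orientable surface $S_g$ of genus $g$. Precomposing with the first inclusion gives an embedding of $Q_n$ into $S_g$, hence $g \geq \gamma(Q_n)$. Since $\gamma(Q_n) = 1+(n-4)2^{n-3}$ by the preceding theorem of this chapter, we conclude $g \geq 1+(n-4)2^{n-3}$, matching the upper bound from the first half.

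The only subtle point is the hypothesis that every vertex of $[n]$ actually lies in $\K$; without this, $\Ll_n \not\subseteq \K$ and the lower-bound argument via $Q_n$ would break down. The main ``obstacle'' is really bookkeeping: one should verify that the inclusion lemma genuinely applies (the two complexes are on the same vertex set), and that the embedding $Q_n \hookrightarrow \Z_{\K_n}(D^1,S^0)$ sitting inside the surface is the same one realizing $\gamma(Q_n)$, which is immediate from the explicit description $\Z_{\Ll_n}(D^1,S^0) = \bigcup_i (S^0 \times \cdots \times D^1 \times \cdots \times S^0)$ already used in the preceding section. No further computation is required; the proposition follows by combining the two results that were just proved.
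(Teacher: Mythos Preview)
Your proof is correct and follows essentially the same route as the paper: sandwich $\Z_{\K}(D^1,S^0)$ between $\Z_{\Ll_n}(D^1,S^0)=Q_n$ and $\Z_{\K_n}(D^1,S^0)$ via the inclusion lemma, then invoke the genus of $Q_n$ for the lower bound and the surface $\Z_{\K_n}(D^1,S^0)$ for the upper bound. Your write-up is in fact more explicit than the paper's in separating the existence and minimality halves and in flagging where the hypothesis on the vertex set is used.
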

\begin{proof}
Let $\K_n$ be the boundary of an $n$-gon and $\Ll_n$ be the set of $n$ discrete vertices of $\K_n$. For any subcomplex $K$ of $\K_n$ containing $n$-vertices, we have $\Ll_n\subset \K \subset \K_n$. Therefore, we have an embedding $\Z_{\Ll_n}(D^1,S^0)\subset \Z_{\K}(D^1,S^0)\subset \Z_{\K_n}(D^1,S^0)$. Now by theorem \ref{main}, we know that the minimal genus of $\Z_{\Ll_n}=Q_n$ is $1+(n-4)2^{n-3}$. Hence, the conclusion follows.

\end{proof}

\section{Action of \texorpdfstring{$\z_n$}{Cn} on \texorpdfstring{$Q_n$}{Qn} and \texorpdfstring{$\Z_{\K_n}(D^1,S^0)$}{Zk}}

Let $\z_n$ denote the cyclic group with $n$ elements generated by $\sigma$. Since $\K_n$ can be considered as the boundary of a regular $n$-gon, we can define an action of $\z_n$ on $\K_n$ by rotating the $n$-gon by $2\pi/n$ radians about the centre. If $(i,i+1)$ represents an edge, then this action will take this edge to $(i+1,i+2)$ (here the vertices are considered as $i\ (\mod n)$). So We can define an action of $\z_n$ on $\Z_{\K_n}(D^1,S^0)$ by
$\sigma (x_1,\cdots, x_n) = (x_{\sigma(1)},\cdots,x_{\sigma(n)})=(x_2, \cdots, x_n,x_1)$ where $(x_1,\cdots, x_n) \in  (D^1,S^0)^\tau$ for a maximal face $\tau\in \K_n$. So $\sigma$ is rotating the coordinates of a point in the moment-angle complex $\Z_{\K_n}(D^1,S^0)$. We can define a similar action of $\z_n$ on the hypercube graph $Q_n$ by rotating the coordinates of a point. It is straightforward to note that the following diagram commutes.
$$
\begin{tikzcd}
Q_n \arrow{d} \arrow[r, hook]
& \M \arrow{d} \\
Q_n/\z_n \arrow[r,hook]
& \M/\z_n
\end{tikzcd}
$$
Therefore the quotient graph $Q_n/\z_n$ is embedded in the quotient space $\M/\z_n$. We will show that the quotient space \\ $\Z_{\K_n}(D^1,S^0)/\z_n$ is also a closed connected orientable surface. Therefore, calculating the genus of the surface $\Z_{\K_n}(D^1,S^0)/\z_n$ would suffice to give an upper bound for the genus of the quotient graph $Q_n/\z_n$. First we will prove that $\Z_{\K_n}/\z_n$ is closed connected, compact and orientable manifold. Then we will calculate the genus of this surface. Indeed the following theorem can be found in Ali's thesis \cite{Ali} theorem 4.2.2.

\begin{theorem}
Let $\z_m$ be the subgroup of $\z_n$ i.e. $m|n$, then $\Z_{\K_n}(D^1,S^0)/\z_m$ is a closed surface.
\end{theorem}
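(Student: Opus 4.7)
The plan is to build on the preceding proposition, which shows that $\Z_{\K_n}(D^1,S^0)$ is a closed compact $2$-manifold, and to reduce the theorem to a purely local statement about the $\z_m$-action. Because $\z_m$ is finite and acts by self-homeomorphisms on a compact connected Hausdorff space, the orbit space $\Z_{\K_n}(D^1,S^0)/\z_m$ is automatically compact, connected, and Hausdorff (orbits are finite and closed, so standard separation arguments apply). The entire content of the theorem is therefore that every orbit has a Euclidean neighborhood of dimension $2$ in the quotient, which I would establish by classifying and locally resolving the non-free orbits.

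First I would classify the fixed-point loci of each nontrivial element of $\z_m$. If $\sigma$ generates $\z_n$, then $\z_m=\langle\sigma^{n/m}\rangle$, and an element $\sigma^k\in\z_m$ fixes $(x_1,\ldots,x_n)$ exactly when the tuple is periodic of period dividing $p=\gcd(k,n)$. For $p<n$, I would argue that the indexing set $I=\{i:x_i\notin S^0\}$ must be empty: $I$ is necessarily a face of $\K_n$, hence either empty, a singleton, or a consecutive pair, and any such nonempty set fails to be invariant under a cyclic shift by $p<n$. Consequently every nontrivially-stabilized point lies in the finite vertex set $\{\pm 1\}^n \subset Q_n\subset \Z_{\K_n}(D^1,S^0)$, giving at most $2^p$ fixed points for each proper divisor $p$ of $n$.

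Next, on the complement of these finitely many special points, the $\z_m$-action is free and the quotient map is a local homeomorphism onto a surface. Near each fixed orbit, let $\z_d\leq \z_m$ be the isotropy group; I would exhibit a $\z_d$-invariant open disk neighborhood and invoke the classical Brouwer--Ker\'ekj\'art\'o theorem, according to which a finite cyclic group acting by orientation-preserving homeomorphisms on a disk with an isolated fixed point at the origin is topologically conjugate to a group of rigid rotations. The quotient of a disk by a cyclic rotation is again a disk, giving a Euclidean chart at the singular orbit, and gluing these local models to the free-locus charts completes the manifold structure on $\Z_{\K_n}(D^1,S^0)/\z_m$.

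The main obstacle is verifying the orientation hypothesis of Brouwer--Ker\'ekj\'art\'o, namely that the $\z_m$-action preserves a chosen orientation of $\Z_{\K_n}(D^1,S^0)$. I would check this by using the natural $\z_n$-equivariant CW-decomposition: the $2$-cells of $\Z_{\K_n}(D^1,S^0)$ are the $2^{n-2}$ copies of $D^1\times D^1$ corresponding to pairs $(\tau,\epsilon)$ with $\tau$ an edge of $\K_n$ and $\epsilon\in\{\pm 1\}^{[n]\setminus \tau}$, and the cyclic shift permutes these $2$-cells according to the orientation-preserving rotational symmetry of the underlying $n$-gon while transporting the product orientation on each $D^1\times D^1$ coherently. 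Alternatively one may bypass Brouwer--Ker\'ekj\'art\'o entirely by appealing to the branched-covering description of $\Z_{\K_n}(D^1,S^0)\to\Z_{\K_n}(D^1,S^0)/\z_m$ from Ali's thesis, which already exhibits each fixed vertex as a branch point with a manifest polygonal orbit-neighborhood homeomorphic to a disk; this is the route I expect to follow to avoid the most technical appeal.
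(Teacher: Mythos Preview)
The paper does not supply its own proof of this theorem; it records the statement with a citation to Ali Al-Raisi's thesis (theorem 4.2.2 there) and moves on. What the paper \emph{does} prove immediately afterward is the orientability of the quotient, via an explicit even/odd orientation on the $2$-cells---essentially the CW-level check you sketch for the orientation-preserving hypothesis. So your primary line through Brouwer--Ker\'ekj\'art\'o is an independent argument rather than a reconstruction of anything in the paper, and it is sound: your classification of the fixed points is correct (a point with some coordinate in the open interval $(-1,1)$ cannot be fixed by a nontrivial cyclic shift, since the set of such coordinates would be a nonempty face of $\K_n$ invariant under a proper rotation of $[n]$, which is impossible for an edge or vertex), and the local quotient at each isolated fixed orbit is a disk modulo a finite rotation, hence again a disk.

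Two small remarks. First, the orientation-preserving hypothesis you flag as the main obstacle actually comes for free once you know the fixed points are isolated: a finite-order orientation-reversing self-homeomorphism of a disk is topologically a reflection and therefore has a one-dimensional fixed set, so the isotropy action at an isolated fixed point is forced to be orientation-preserving. You can skip the CW-level orientation check if you wish. Second, your fallback of ``appealing to the branched-covering description from Ali's thesis'' is circular in this context, since the theorem you are proving \emph{is} the cited result from that thesis; stay with your direct argument.
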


It is not surprising that the quotient surface $\Z_{\K}(D^1,S^0)/\z_n$ must be an orientable surface. We can prove it by giving a $\Delta$-complex structure on this surface and check that all the triangles on the surface can be given an orientation such that any two neighboring triangles' edges fit nicely. Recall that for an orientable surface, it is possible to decompose the surface into oriented triangles in such a way that any two triangles meet at a neighbouring edge with opposite direction.

\begin{lemma}
The surface $\Z_{\K_n}(D^1,S^0)/\z_n$ is an orientable surface.
\begin{proof}
First note that the action of $\z_n$ permutes the coordinate of a point in a cyclic manner. So we only need to consider the space $D^1\times D^1\times S^0\times \dots \times S^0$, which is actually $2^{n-2}$ copies of $D^1\times D^1$. We call one such square a even square (odd square) if the sum of the last $(n-2)$ coordinates is even (odd). For each of these squares, we draw a diagonal from the lower left corner to the top right, make a triangulation of the surface and give suitable orientation to each triangle. For even squares, take a counterclockwise orientation ($00\to10\to11\to01$) and for odd squares take a clockwise orientation ($00\to10\to11\to10$) as shown in \cref{fig:orientation}.  

\begin{figure}%
    \centering
    \subfloat[Even squares]{{
\begin{tikzpicture}[scale=.8]
\begin{scope}[thick,decoration={
    markings,
    mark=at position 0.55 with {\arrow[scale=1.2,>=stealth]{>}}}
    ] 
    \draw[postaction={decorate}] (-1,0)--(1,0);
    \draw[postaction={decorate}] (1,0)--(1,2);
    \draw[postaction={decorate}] (-1,2)--(1,2);
    \draw[postaction={decorate}] (-1,0)--(-1,2);
    \draw[postaction={decorate}] (-1,0)--(1,2);
    \node [below] at (-1,0) {$00$};
    \node [below] at (1,0) {$10$};
    \node [above] at (-1,2) {$01$};
    \node [above] at (1,2) {$11$};
    \draw[thin,-stealth] (.7,.5) arc (0:300:.2cm);
    \draw[thin,-stealth] (-.3,1.5) arc (0:300:.2cm);
\end{scope}
\end{tikzpicture}
    }}%
    \qquad
    \subfloat[Odd squares]{{
\begin{tikzpicture}[scale=.8]
\begin{scope}[thick,decoration={
    markings,
    mark=at position 0.55 with {\arrow[scale=1.2,>=stealth]{>}}}
    ] 
    \draw[postaction={decorate}] (-1,0)--(1,0);
    \draw[postaction={decorate}] (1,0)--(1,2);
    \draw[postaction={decorate}] (-1,2)--(1,2);
    \draw[postaction={decorate}] (-1,0)--(-1,2);
    \draw[postaction={decorate}] (-1,0)--(1,2);
    \node [below] at (-1,0) {$00$};
    \node [below] at (1,0) {$10$};
    \node [above] at (-1,2) {$01$};
    \node [above] at (1,2) {$11$};
    \draw[thin,-stealth] (.7,.5) arc (0:-300:.2cm);
    \draw[thin,-stealth] (-.3,1.5) arc (0:-300:.2cm);
\end{scope}
\end{tikzpicture}
    }}%
    \caption{Orientation on the triangles on $\M/\z_n$}%
    \label{fig:orientation}%
\end{figure}

Then we glue these squares along their boundaries under the identification generated by $\z_n$ and check that the orientation of each square is preserved. Let $\epsilon_1\epsilon_2\dots \epsilon_n$ represent the coordinate $(\epsilon_1,\dots, \epsilon_n )$ where $\epsilon_i = 0 \text{ or } 1$. For abbreviation, we write directed edges as $(000, 010)$ which represent the directed edge from $(0,0,0)$ to $(0,1,0)$.

\textbf{Case 1:}($n=3$).
We have two copies of $D^1\times D^1$ (Figure \ref{fig:my_label5}a). Under the action of $\z_n$, we have the following identification of edges on the boundary of this two squares.
$$
(000,010) \sim (000,100),\quad (001,011) \sim (010,110),
$$
$$
(100,110) \sim (001,101),\quad (101,111)  \sim  (011,111) 
$$
As shown in \cref{fig:my_label5}, we can give orientation to to each of the triangles such that the orientation of each edge fits together. 
\begin{figure}
    \centering
    \includegraphics[scale = 1.]{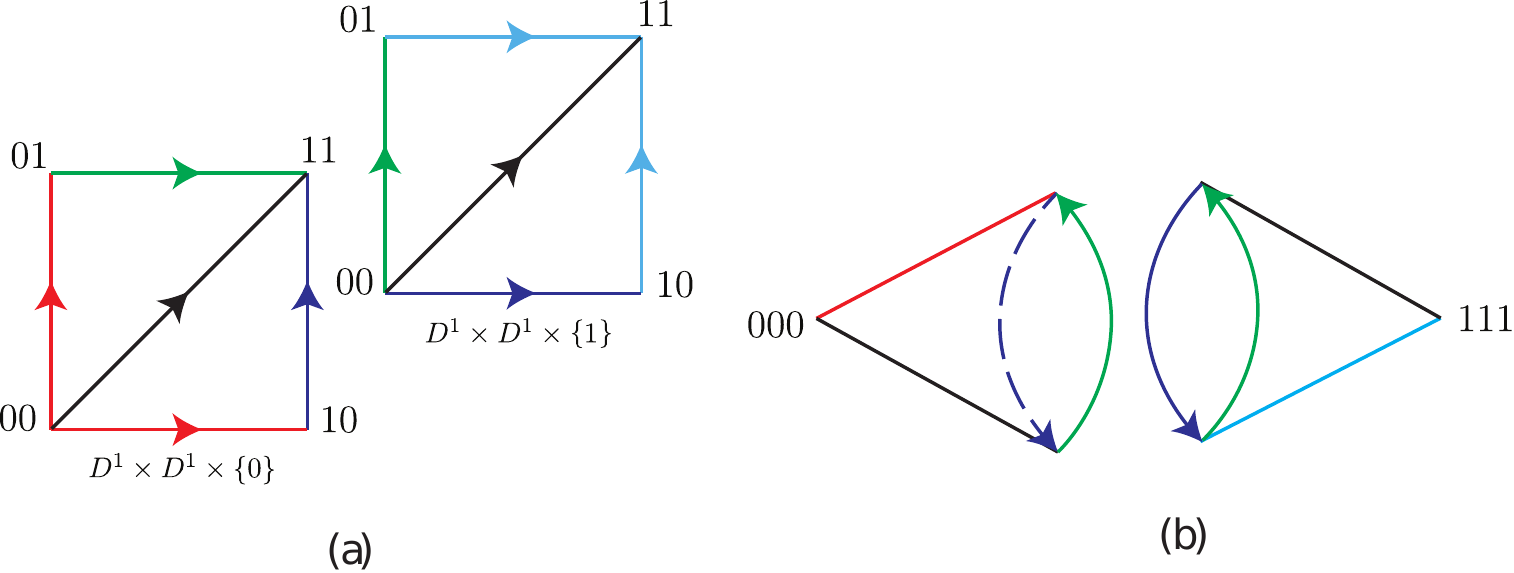}
    \caption{Quotient space $\Z_{\K_3}(D^1,S^0)/\z_3$.}
    \label{fig:my_label5}
\end{figure}

\textbf{Case 2:}($n=4$). In this case, we have four copies of $D^1\times D^1$ as shown in (Figure \ref{fig:my_label6}b). And we have the identification of edges as follows:
$$
(0000,0100) \sim (0000,1000),\quad (0010,0110) \sim (0100,1100) ,
$$
$$
(1000,1100) \sim (0001,1001),\quad (1011,1111) \sim (0111,1111) 
$$
$$
(1001,1101) \sim (0011,1011),\quad (0011,0111) \sim (0110,1110),
$$
$$
(1010,1110) \sim (0101,1101),\quad (0001,0101) \sim (0010,1010) 
$$
We can see from the figure that the orientation of each triangle is compatible to each other.
\begin{figure}
    \centering
    \includegraphics[scale = 1.0]{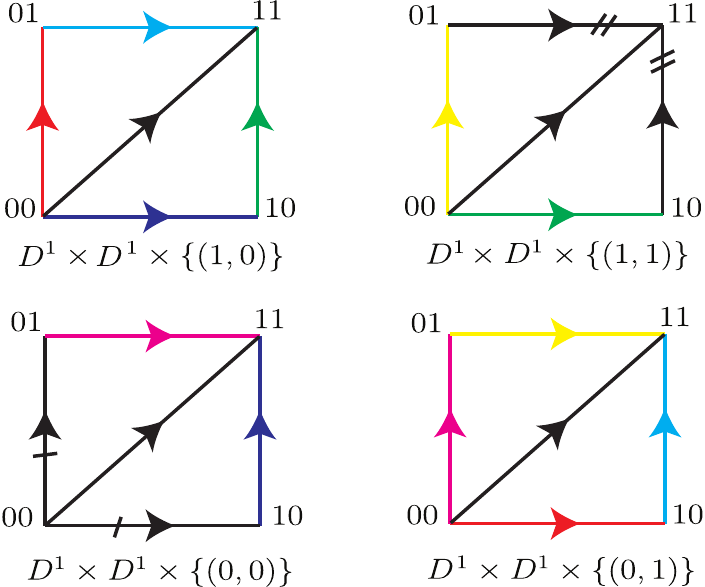}
    \caption{Quotient space $\Z_{\K_4}(D^1,S^0)/\z_4$.}
    \label{fig:my_label6}
\end{figure}

\textbf{Case 3:} ($n\geq 5$). For $n\geq 5$, we can have the identification of edges as follows:
\begin{eqnarray*}
 (0000x,1000x) \sim (000x0,000x1),\quad&(0010x,0110x) \sim (010x0,110x0),\\
(100x0,110x0) \sim (00x01,10x01),\quad &(101x1,111x1) \sim (01x11,1x111), \\
(1001x,1101x) \sim (001x1,101x1),\quad &(001x1,011x1) \sim (01x10,11x10),\\
(1010x,1110x) \sim (010x1,110x1),\quad &(000x1,010x1) \sim (00x10,10x10),\\
\end{eqnarray*}
Here, $x$ represents a string of length $(n-4)$ whose characters can be $0$ or $1$. From the above identification, one can check that any two neigbouring square has compatible orientation. Therefore, all this identifications preserve the orientation of the surface. 
Therefore, $\Z_\K(D^1,S^0)/\z_n$ is an orientable surface.
\end{proof}
\end{lemma}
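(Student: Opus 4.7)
The plan is to exhibit the $\z_n$-action on $\M$ as orientation-preserving and then invoke the standard principle that a finite group of orientation-preserving homeomorphisms on a connected orientable manifold gives an orientable quotient whenever the quotient is itself a manifold. Both hypotheses are already in hand: the preceding proposition supplies the orientability of $\M$, while the theorem cited just before the lemma guarantees that $\M/\z_n$ is a closed surface.

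First, I would label the $2$-cells of $\M$: they are squares $D^1\times D^1$ indexed by pairs $((j,j{+}1),\vec{\epsilon})$, where $(j,j{+}1)$ (indices mod $n$) is an edge of $\K_n$ and $\vec{\epsilon}=(\epsilon_i)_{i\neq j,j+1}\in\{\pm 1\}^{n-2}$ records the signs at the remaining coordinates. On each $2$-cell I use the natural local coordinates $(x_j,x_{j+1})$. A direct unpacking of $\sigma(y_1,\dots,y_n)=(y_2,\dots,y_n,y_1)$ shows that $\sigma$ carries the cell $((j,j{+}1),\vec{\epsilon})$ to the cell $((j{-}1,j),\vec{\epsilon}')$ with $\epsilon'_k=\epsilon_{k+1}$, and that on local coordinates the restriction is the identity map.

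The heart of the argument is to produce a $\z_n$-invariant global orientation. I would take
\[
\omega_{(j,j+1),\vec{\epsilon}}=\alpha(\vec{\epsilon})\,dx_j\wedge dx_{j+1},\qquad\text{where}\qquad\alpha(\vec{\epsilon})=\prod_{i\neq j,j+1}\epsilon_i.
\]
For global consistency I would check that along a shared $1$-cell at vertex $j{+}1$ with ambient signs $(\tau_i)_{i\neq j+1}$ the two adjacent $2$-cells induce opposite orientations; the standard outward-normal boundary convention reduces this to $\alpha_A\,\tau_j=\alpha_B\,\tau_{j+2}$ with $\alpha_A=\prod_{i\neq j,j+1}\tau_i$ and $\alpha_B=\prod_{i\neq j+1,j+2}\tau_i$, and both sides equal $\prod_{i\neq j+1}\tau_i$. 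For $\z_n$-invariance, the relabeling $\epsilon'_k=\epsilon_{k+1}$ bijects the index set $\{i\neq j,j+1\}$ onto $\{k\neq j{-}1,j\}$ while preserving the product of signs, so $\alpha(\vec{\epsilon}')=\alpha(\vec{\epsilon})$; combined with the identity on local coordinates, this gives $\sigma^*\omega=\omega$.

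The main obstacle is picking the sign prefactor correctly: the naive choice $\alpha\equiv+1$ fails globally as soon as two adjacent entries of $\vec{\epsilon}$ disagree in sign, so the product-of-signs factor is essential. Once $\omega$ is in place, the orientation descends to the complement of the $\z_n$-fixed-point set in $\M/\z_n$; since this fixed-point set is discrete (hence of codimension $2$ in the surface $\M/\z_n$), the orientation extends uniquely to the whole quotient.
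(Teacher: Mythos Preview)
Your proof is correct, and it takes a genuinely different route from the paper's own argument for this lemma. The paper works directly on the quotient: it takes the $2^{n-2}$ squares of $D^1\times D^1\times S^0\times\cdots\times S^0$ as a fundamental domain for the $\z_n$-action, orients each one according to the parity of the sum of the auxiliary $\{0,1\}$-coordinates, and then checks by an explicit case analysis ($n=3$, $n=4$, $n\geq 5$) that every edge identification imposed by $\z_n$ matches orientations correctly. You instead work upstairs on $\M$: you write down a global orientation, verify it is $\z_n$-invariant, and then invoke the descent principle for orientation-preserving finite group actions with codimension-$2$ branch locus. Your product-of-signs prefactor $\alpha(\vec{\epsilon})=\prod_{i\neq j,j+1}\epsilon_i$ (with $\epsilon_i\in\{\pm1\}$) is exactly the paper's even/odd parity rule in multiplicative disguise, so the orientations agree; what differs is that you replace the case-by-case edge-matching by a single uniform computation showing $\sigma^*\omega=\omega$, together with the clean adjacency check $\alpha_A\tau_j=\alpha_B\tau_{j+2}=\prod_{i\neq j+1}\tau_i$.

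Your approach buys economy and uniformity in $n$; the paper's approach buys an explicit picture of the quotient's cell structure. It is worth noting that the paper itself adopts essentially your strategy later, when it shows that $\z_n$ acts trivially on $H_2(\M)$: there it constructs the same parity-based orientation on $\M$ and argues that the cyclic shift preserves it. So your proof effectively anticipates and reuses that later argument. One small wording point: your opening sentence says the ``preceding proposition supplies the orientability of $\M$,'' but you do not actually use that proposition---you construct the orientation $\omega$ from scratch, which is necessary since mere orientability does not give $\z_n$-invariance.
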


Since the quotient of a compact and connected space is also a compact and connected space, we have proved the following theorem.
\begin{theorem}
\label{theorem_quotient}
Let $\K$ be the boundary of an $n$-gon. Then $\Z_\K(D^1,S^0)/\z_n$ is a closed, compact and orientable surface.
\end{theorem}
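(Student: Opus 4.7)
The plan is to assemble the theorem from facts already established in the excerpt rather than to prove anything substantially new. From the earlier proposition, $\Z_{\K_n}(D^1,S^0)$ is a closed, compact, connected, orientable surface of genus $1+(n-4)2^{n-3}$, so the preliminary task is to transport each of the four adjectives across the quotient map $q\colon \Z_{\K_n}(D^1,S^0)\to \Z_{\K_n}(D^1,S^0)/\z_n$.

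First I would dispatch compactness and connectedness: $q$ is continuous and surjective, and compactness and connectedness are preserved under continuous surjective images, so the quotient inherits both properties from $\Z_{\K_n}(D^1,S^0)$. Closedness (as a manifold, i.e.\ compact without boundary) then reduces to verifying that the quotient is a topological $2$-manifold. This is where the construction from the immediately preceding lemma does the real work: the explicit gluing pattern on the $2^{n-2}$ squares $D^1\times D^1$ obtained by cyclically identifying boundary edges according to the $\z_n$-action presents $\Z_{\K_n}(D^1,S^0)/\z_n$ as the underlying space of a finite $\Delta$-complex in which every edge is shared by exactly two triangles and every vertex has a cyclically ordered link, which is the standard local criterion for a (necessarily closed) $2$-manifold.

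With the manifold structure in hand, orientability is exactly the content of the preceding lemma: the case analysis for $n=3$, $n=4$, and $n\geq 5$ exhibits a coherent orientation on each triangle of the $\Delta$-complex compatible across every identification coming from the $\z_n$-action. Combining these three observations (compact, connected, closed $2$-manifold that admits a coherent orientation) gives the theorem. The main conceptual point to be careful about is that a priori the action of $\z_n$ could have fixed points producing cone-type singularities in the quotient, but the explicit triangulation in the preceding lemma rules this out by directly exhibiting Euclidean neighborhoods around every point, including around the images of any fixed points. Hence there is essentially nothing left to do beyond citing the preceding lemma and invoking the elementary preservation of compactness and connectedness under quotient maps.
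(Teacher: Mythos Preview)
Your proposal is correct and follows essentially the same route as the paper: the theorem is assembled from the preceding orientability lemma together with the elementary fact that compactness and connectedness pass to quotients. The only minor difference is that the paper cites Ali's thesis for the fact that the quotient is a closed surface, whereas you extract the manifold structure directly from the $\Delta$-complex built in the orientability lemma; this makes your version slightly more self-contained but is not a different argument.
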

In a similar manner, it is also possible to show that theorem \ref{theorem_quotient} is true for a subgroup $\z_m\subset \z_n$ where $m\mid n$. More detail can be found in Ali's thesis~\cite{Ali}. Or, one can just use the above triangulation to show that the quotient space is orientable.
\subsection{Branched coverings and the genus of \texorpdfstring{$\Z_\K(D^1,S^0)/\z_n$}{ZK}}

Next, we will prove the following lemma which gives a formula for finding the genus of the quotient space.

\begin{lemma}
\label{my_lemma1}
The genus of $\Z_\K(D^1,S^0)/\z_n$ is given by the following formula:
\begin{equation}
g(\Z_\K(D^1,S^0)/\z_n) = 1 + 2^{n-3} - \frac{1}{2n}\sum_{d|n} \phi(d)2^{n/d} 
\end{equation}
\end{lemma}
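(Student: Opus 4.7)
The plan is to compute the Euler characteristic of $\Z_{\K_n}(D^1,S^0)/\z_n$ via a Burnside-type averaging formula over the group $\z_n$, and then convert to genus using the fact (established in the preceding theorem) that the quotient is closed, compact, and orientable.

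First I would equip $\Z_{\K_n}(D^1,S^0)$ with the natural $\z_n$-equivariant CW structure coming from the polyhedral product construction: the cells are products of factors, each factor being a $0$- or $1$-cell of $(D^1, S^0)$, and $\sigma$ permutes coordinates cyclically. Since the action is cellular and $\Z_{\K_n}(D^1,S^0)$ is a finite CW complex, the standard identity
\begin{equation*}
\chi\bigl(\Z_{\K_n}(D^1,S^0)/\z_n\bigr) \;=\; \frac{1}{n}\sum_{k=0}^{n-1} \chi\bigl(\mathrm{Fix}(\sigma^k)\bigr)
\end{equation*}
applies. For $k=0$ this contributes $\chi(\Z_{\K_n}) = 2 - 2\bigl(1+(n-4)2^{n-3}\bigr) = (4-n)2^{n-2}$ by the proposition in the preceding section.

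Next I would determine $\mathrm{Fix}(\sigma^k)$ for $1\le k\le n-1$. A point $(x_1,\dots,x_n)$ is fixed by $\sigma^k$ iff $x_i = x_{i+k}$ for all $i$ (indices mod $n$); writing $e=\gcd(n,k)$, this forces the coordinates to be periodic with period $e$. The subtle point is to show that such a fixed point must lie in $(S^0)^n$: the set $T(x)=\{i:x_i\in(-1,1)\}$ must be a face of $\K_n$ (so $\emptyset$, a vertex, or two consecutive vertices), and it must be invariant under translation by $e$. A short case check rules out the nonempty possibilities for $1\le e<n$, since a singleton requires $e\equiv 0\pmod n$ and an edge $\{i,i+1\}$ requires $n\mid 2$. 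Hence $\mathrm{Fix}(\sigma^k)$ is the discrete set $\{(x_1,\dots,x_n)\in\{-1,1\}^n : x_i=x_{i+e}\}$, which has exactly $2^{e}$ elements.

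Plugging in and regrouping the sum by the value $e=\gcd(n,k)$, using the classical count $\#\{k\in\{0,\dots,n-1\}:\gcd(n,k)=e\}=\phi(n/e)$ and reindexing $d=n/e$, I obtain
\begin{equation*}
\sum_{k=0}^{n-1} 2^{\gcd(n,k)} \;=\; \sum_{d\mid n}\phi(d)\,2^{n/d}.
\end{equation*}
Subtracting the $k=0$ contribution $2^n$ and adding back $\chi(\Z_{\K_n})=(4-n)2^{n-2}$ (noting $(4-n)2^{n-2}-2^n = -n\cdot 2^{n-2}$) gives
\begin{equation*}
\chi\bigl(\Z_{\K_n}(D^1,S^0)/\z_n\bigr) \;=\; -2^{n-2} + \frac{1}{n}\sum_{d\mid n}\phi(d)\,2^{n/d}.
\end{equation*}
Finally, since the quotient is a closed orientable surface, $\chi = 2-2g$, and solving yields the claimed formula $g = 1 + 2^{n-3} - \tfrac{1}{2n}\sum_{d\mid n}\phi(d)\,2^{n/d}$.

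The main obstacle is the fixed-point analysis in the middle step: one must justify that no interior point of a $2$-cell of $\Z_{\K_n}$ can be fixed by a nontrivial rotation. Everything else is bookkeeping with the Burnside formula and the elementary number-theoretic identity. As a sanity check, the formula yields genus $0$ for $n=3,4$ (spheres from $S^2/\z_3$ and $T^2/\z_4$) and genus $1$ for $n=5$, matching a direct Riemann--Hurwitz computation.
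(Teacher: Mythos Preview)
Your argument is correct and takes a genuinely different route from the paper. The paper proves the lemma via the Riemann--Hurwitz formula for the branched covering $\Z_{\K_n}(D^1,S^0)\to\Z_{\K_n}(D^1,S^0)/\z_n$ (citing the branched-cover structure from an external source), identifies the branch locus as the set of points with all coordinates in $S^0$, groups these by isotropy type using Moreau's primitive-necklace count $M(d)$, and then solves Riemann--Hurwitz for $\chi(X/G)$. You instead use the equivariant Euler-characteristic identity $\chi(X/G)=\frac{1}{|G|}\sum_{g}\chi(X^g)$, compute each fixed set $\mathrm{Fix}(\sigma^k)$ directly, and sum via the elementary identity $\sum_{k=0}^{n-1}2^{\gcd(n,k)}=\sum_{d\mid n}\phi(d)2^{n/d}$. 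Both routes land on the same expression $\chi(X/G)=-2^{n-2}+\frac{1}{n}\sum_{d\mid n}\phi(d)2^{n/d}$.

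What each approach buys: your Burnside averaging is more self-contained---it needs neither the branched-covering lemma nor the Moreau/M\"obius machinery, and the verification that the natural product CW structure is a $\z_n$-CW structure (stabilizers of positive-dimensional cells are trivial for $n\ge3$, exactly your ``no interior fixed points'' step) is the same short case analysis you already do. The paper's Riemann--Hurwitz approach, on the other hand, makes the geometry of the quotient map explicit and ties the computation to the necklace combinatorics that recur later in the thesis; it also generalizes more readily to the subgroups $\z_m\subset\z_n$ treated nearby. Your ``main obstacle'' is not really an obstacle: the case check you sketch is complete and is essentially the same observation the paper uses to locate the branch points.
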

where $\phi$ is the Euler totient function.

To prove this lemma, we will use the Riemann-Hurwitz formula for branched covering. Note that the quotient map $\Z_\K(D^1,S^0) \to \Z_\K(D^1,S^0)/\z_n$ would be a covering map if remove a finite number of points (the corners of each $D^1\times D^1$). So this quotient map is a branched cover. We use the following definition from \cite{Turner}.

\begin{definition}
Let $X$ and $Y$ be two surfaces. A map $p: X\to Y$ is called a \textit{branched covering} if there exists a codimension 2 subset $S\subset Y$ such that $p: X\setminus p^{-1}(S) \to Y\setminus S$ is a covering map. The set S is called the branch set and the preimage $p^{-1}(S)$ is called the singular set.
\end{definition}
\begin{definition}
Let $p: X\to Y$ be a branched covering of two surfaces where $Y$ is connected. The degree of this branched covering is the number of sheets of the induced covering after removing the branch points and singular points. 
\end{definition}
In \cite{Ali}, it is proved that $\Z_\K(D^1,S^0) \to \Z_\K(D^1,S^0)/\z_n$ is a branched covering of degree $n$. Since the quotient is closed, compact and orientable we can apply the classical Riemann-Hurwitz formula for branched covering. 

\begin{theorem}[Riemann-Hurwitz Formula] Let $G$ be a finite group acting on the surface $X$, such that the map $p:X \to X/G $ be a branched covering with a branch subset $S\subset X/G$. Let $G_y$ represent the isotropy subgroup for a point $y\in X$ and $\chi(X)$ be the Euler characteristic of $X$. Then 
\begin{equation}
\chi (X) = |G|\cdot \chi(X/G) -  \sum_{x\in S} \left(|G|-\frac{|G|}{n_x}\right)
\end{equation}
with $n_x = |G_y|$ for $y\in p^{-1}(x)$ and $x\in S$. 

\end{theorem}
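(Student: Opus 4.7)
The plan is to prove the Riemann–Hurwitz formula by a direct Euler-characteristic count, comparing a well-chosen triangulation of $X/G$ with its lift to $X$. First I would pick a finite triangulation $T$ of the closed surface $X/G$ whose $0$-skeleton contains the branch set $S$. This is possible because the branch set has codimension $2$ in the surface $X/G$, hence is discrete and, by compactness of $X/G$, finite; any finite point set on a triangulable surface can be incorporated into the vertices of some triangulation by a sufficiently fine subdivision. Write $V$, $E$, $F$ for the numbers of $0$-, $1$-, and $2$-simplices of $T$, so that $\chi(X/G)=V-E+F$.

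Next I would lift $T$ to a triangulation $\widetilde T$ of $X$ and count its simplices. The restriction $p\colon X\setminus p^{-1}(S)\to X/G\setminus S$ is a regular covering of degree $|G|$ with deck group $G$. For each open simplex $\sigma^{\circ}$ of $T$ of positive dimension, $\sigma^{\circ}$ is simply connected and lies inside $X/G\setminus S$, hence admits exactly $|G|$ disjoint lifts to $X\setminus p^{-1}(S)$; each lift closes up to a genuine simplex in $X$. Consequently $\widetilde E=|G|E$ and $\widetilde F=|G|F$. On the $0$-skeleton, each non-branch vertex $x\notin S$ has fiber $p^{-1}(x)$ of size $|G|$, while each branch vertex $x\in S$ has fiber equal to a single $G$-orbit, which by orbit–stabilizer has size $|G|/n_x$. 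Hence
\begin{equation*}
\widetilde V \;=\; |G|\bigl(V-|S|\bigr)+\sum_{x\in S}\frac{|G|}{n_x} \;=\; |G|\,V-\sum_{x\in S}\Bigl(|G|-\frac{|G|}{n_x}\Bigr).
\end{equation*}
Forming $\chi(X)=\widetilde V-\widetilde E+\widetilde F$ and factoring $|G|$ out of the $(V-E+F)$ piece yields the claimed identity.

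The main obstacle is making the triangulation lift honest near branch points. For each $y\in p^{-1}(x)$ with $x\in S$, the stabilizer $G_y$ of order $n_x$ acts effectively on a neighborhood of $y$, and since the quotient is again a surface this local action is by rotation, so $G_y$ is cyclic and $p$ is locally modeled by $z\mapsto z^{n_x}$ on a small disc around $y$. Choosing $T$ fine enough that the closed star of each $x\in S$ lies inside a disc over which this local model holds, one checks that the lifts of the simplices incident to $x$ assemble into the stars of the $|G|/n_x$ preimages $y\in p^{-1}(x)$, each wrapping with multiplicity $n_x$. This is the one step that goes beyond purely combinatorial covering-space theory; once it is in place the simplex counts above become book-keeping and the formula falls out.
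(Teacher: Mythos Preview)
The paper does not actually prove the Riemann--Hurwitz formula; it is stated as a classical result and then applied as a black box to compute the genus of $\Z_{\K_n}(D^1,S^0)/\z_n$. There is therefore no proof in the paper to compare against.

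That said, your proposal is a correct and standard proof of the formula. The triangulation-counting argument---choosing a triangulation of $X/G$ with the branch locus in the $0$-skeleton, lifting simplices, and using orbit--stabilizer to count preimages of vertices---is exactly how this result is usually established. Your handling of the delicate point (that positive-dimensional open simplices lie in the unbranched part and hence lift to $|G|$ disjoint copies, while branch vertices have fibers of size $|G|/n_x$) is sound, and your remark that the local model near a branch point is $z\mapsto z^{n_x}$ justifies that the lifted cells really do assemble into a triangulation of $X$. One small comment: you implicitly assume each fiber $p^{-1}(x)$ is a single $G$-orbit, which is automatic here since the $G$-action is transitive on fibers of the quotient map $X\to X/G$; this is worth stating explicitly, as it is what makes $n_x$ well defined (all isotropy groups over a given $x$ are conjugate, hence have the same order).
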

 
 To apply this formula we need to calculate the cardinality of the isotropy subgroup for each of the singular points in $\Z_K(D^1,S^0)$. It is straightforward that the action of $\z_n$ on a point permute its coordinate in a cyclic manner. The only points in $\Z_K(D^1,S^0)$ which have a nontrivial isotropy group have coordinates 0's or 1's only. Therefore, the cardinality of the isotropy subgroup is related to the number of aperiodic necklaces with 2-coloring. We will give some necessary definitions related to aperiodic necklaces and then count the Euler characteristic of $\Z_K(D^1,S^0)/\z_n$ by using the Riemann-Hurwitz formula.

 \begin{definition}\label{aperiodic}
Let $W$ represent a word of length $n$ over an alphabet of size $k$. We define an action of the cyclic group $\z_n=\langle \sigma \rangle$  on W by rotating its characters. For example, if $W = a_1a_2\cdots a_n$ where each $a_i$ is a character from the alphabet, then $\sigma(W) = a_n a_1a_2\cdots a_{n-1}$.
 A word $W$ of length $n$ is called an \textit{aperiodic word} if $W$ has $n$ distinct rotation. 
 \end{definition} 

\begin{definition}\label{primitive_necklace}
An equivalence class of an aperiodic word under rotation is called a \textit{primitive necklace}.
\end{definition} 
The total number of primitive $n$-necklaces on an alphabet of size k, denoted by $M(k,n)$, is given by Moreau's formula \cite{moreau1872permutations},
$$
M(k,n) = \frac{1}{n} \sum_{d|n} \mu (d) k^{n/d}
$$
Note that we can deduce Moreau's formula by using M{\"o}bius inversion formula and the fact that 
$$
k^n = \sum_{d|n} d  M(k,d)
$$

\textbf{Total Number of Necklace of length $n$ with $k$-coloring}: Note that this number is the same as $\sum_{d|n} M(k,d)$ since $M(k,d)$ gives us the number of aperiodic necklaces for each divisor $d$ of $n$. So, we have
$$
\begin{aligned}
\sum_{d|n} M(k,d) &= \sum_{d|n} \frac{1}{d} \sum_{c|d} \mu (c) k^{d/c} \\
&=\frac{1}{n} \sum_{d|n} \sum_{c|d} \frac{n}{d} \mu (d/c) k^{c} \\
&= \frac{1}{n} \sum_{c|n} \sum_{\substack{b|\frac{n}{c}\\ d = bc}} \frac{n}{d} \mu (d/c) k^{c}  \\
&= \frac{1}{n}\sum_{c|n} k^c \sum_{b|\frac{n}{c}}\mu(b)\left(\frac{n}{c}\right) \left(\frac{1}{b}\right)\\
&= \frac{1}{n}\sum_{d|n} \phi(d) k^{n/d}\\
\end{aligned}
$$
The last line follows since $\sum_{b|\frac{n}{c}}\mu(b)(\frac{n}{c}) (\frac{1}{b}) = \phi(n/c)$, where $\phi$ is the Euler's totient function. 

For our calculation, $k=2$ since we are only concerned about words with 2 letters or necklaces with 2-coloring. We will denote this Moreau's formula by 
$$
M(n) = \frac{1}{n} \sum_{d|n} \mu (d) 2^{n/d}
$$
So we have, $\sum_{d|n} M(d)= \frac{1}{n}\sum_{d|n} \phi(d) 2^{n/d}$.
\begin{proof}[\textbf{Proof of lemma \ref{my_lemma1}}]

Note that $\z_n$ acts on a point of $\Z_\K(D^1,S^0)$ by cyclically permuting the coordinates. So $\z_n$ acts freely on all but finitely many points. The coordinate of those points can be only $0$ or $1$. Each point in the branch set can be considered a primitive necklace of length $d$ where $d|n$. Clearly, there are $M(d)$ points in the branch set which has isotropy group $\z_{n/d}$. So the summation in the Riemann-Hurwitz formula becomes $$\sum_{x\in S} \left(|G|-\frac{|G|}{n_x}\right) = \sum_{d|n} M(d) (n- \frac{n}{n/d})$$ 

Now using the Riemann-Hurwitz formula,

$$
\begin{aligned}
&\chi(\Z_\K(D^1,S^0)) = n .\chi(X/G) -  \sum_{d|n} M(d) (n- \frac{n}{n/d})\\
&\implies (4-n)2^{n-2} = n . \chi(X/G) - \sum_{d|n} n M(d) + \sum_{d|n} d M(d)\\
&\implies 2^n -n 2^{n-2} = n. \chi(X/G) - n \sum_{d|n} M(d) + 2^n\\
&\implies  \chi(\Z_\K(D^1,S^0)/\z_n) = \sum_{d|n} M(d) - 2^{n-2}\\ 
&\implies \chi(\Z_\K(D^1,S^0)/\z_n) = \frac{1}{n}\sum_{d|n} \phi(d)2^{n/d} - 2^{n-2}\\
&\implies g(\Z_\K(D^1,S^0)/\z_n) = 1 + 2^{n-3} - \frac{1}{2n}\sum_{d|n} \phi(d)2^{n/d}
\end{aligned}
$$
So the quotient space $Z_{K_n}(D_1,S^0)/\z_n$ has genus equal to 
$$ 1 + 2^{n-3} - \frac{1}{2}(\#\textit{ of n-length necklace with 2-coloring}) $$

\end{proof}

\begin{example}
For $n = 6$, $Z_{K_n}(D^1,S^0)$ is a surface with genus $1+ (6-4)2^{6-3} = 17$. Under the above formula, the genus of the quotient space $Z_{K_n}(D^1,S^0)/\z_n$ is 
$$
1+ 2^3 - \frac{1}{2}(\textit{\#of 6-length necklace with 2-coloring})
$$
The number of $6$ length necklace with 2-coloring is exactly
$$
\frac{1}{6}\sum_{d|6} \phi(d)2^{6/d} = \frac{1}{6}(1.2^6+1.2^3+2.2^2+2.2) = 14
$$
So $Z_{K_n}(D^1,S^0)/\z_n$ has genus $9-14/2 = 2$.

\end{example}

\subsection{ An upper bound for genus of quotient graph \texorpdfstring{$Q_n/\z_n$}{Qn/Zn}} From the above discussion, we have proved the following lemma.
\begin{lemma}
The genus of the quotient graph, $Q_n/\z_n$ has an upper bound: 
\begin{equation}
\gamma(Q_n/\z_n) \leq 1 + 2^{n-3} - \frac{1}{2n}\sum_{d|n} \phi(d)2^{n/d}
\end{equation}
\end{lemma}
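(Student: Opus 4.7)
The plan is to assemble the inequality directly from the results already in hand. By Lemma~\ref{my_lemma1}, $\Z_{\K_n}(D^1,S^0)/\z_n$ is a closed, compact, orientable surface of genus $1 + 2^{n-3} - \frac{1}{2n}\sum_{d|n}\phi(d)2^{n/d}$. So all I need to do is verify that the induced map $Q_n/\z_n \hookrightarrow \Z_{\K_n}(D^1,S^0)/\z_n$ is an embedding of the quotient graph into the quotient surface; once that is established, the genus bound is immediate from the definition $\gamma(Q_n/\z_n) = \min\{g : Q_n/\z_n \text{ embeds in } S_g\}$.

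First I would recall the equivariant inclusion $Q_n = \Z_{\Ll_n}(D^1,S^0) \hookrightarrow \Z_{\K_n}(D^1,S^0)$ coming from $\Ll_n \subset \K_n$. Since $\z_n$ acts on both spaces by the same rule (cyclic permutation of coordinates), this inclusion is $\z_n$-equivariant, which gives the commutative square displayed above the lemma and a well-defined continuous map $\overline{\iota}\colon Q_n/\z_n \to \Z_{\K_n}(D^1,S^0)/\z_n$ on the quotients. Next I would check injectivity of $\overline{\iota}$: if two points $x, y \in Q_n$ satisfy $\overline{\iota}([x]) = \overline{\iota}([y])$, then $y = \sigma^k x$ in $\Z_{\K_n}(D^1,S^0)$ for some $k$, and because the $\z_n$-action preserves the subcomplex $Q_n \subset \Z_{\K_n}(D^1,S^0)$, the element $\sigma^k$ already identifies $x$ with $y$ inside $Q_n$, so $[x]=[y]$ in $Q_n/\z_n$. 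Combined with compactness of $Q_n/\z_n$ and Hausdorffness of $\Z_{\K_n}(D^1,S^0)/\z_n$, continuity plus injectivity upgrade to a topological embedding.

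With the embedding in place, the conclusion follows: $Q_n/\z_n$ sits inside a closed orientable surface of genus $1 + 2^{n-3} - \frac{1}{2n}\sum_{d|n}\phi(d)2^{n/d}$, and hence
\begin{equation*}
\gamma(Q_n/\z_n) \;\leq\; 1 + 2^{n-3} - \frac{1}{2n}\sum_{d|n}\phi(d)2^{n/d}.
\end{equation*}
The only genuinely delicate step is the injectivity check for $\overline{\iota}$, but it reduces to the stability of $Q_n$ under the $\z_n$-action, which is transparent from the coordinate-rotation description of the action on $\Z_{\K_n}(D^1,S^0)$; no further computation is needed.
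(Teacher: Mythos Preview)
Your proposal is correct and follows exactly the same route as the paper: use the $\z_n$-equivariant inclusion $Q_n\hookrightarrow \Z_{\K_n}(D^1,S^0)$ to obtain an embedding $Q_n/\z_n\hookrightarrow \Z_{\K_n}(D^1,S^0)/\z_n$, then invoke Lemma~\ref{my_lemma1} for the genus of the target surface. The paper simply declares the lemma proved ``from the above discussion'' (the commutative square and the genus formula), whereas you additionally spell out why the induced map on quotients is a topological embedding; this extra care is welcome but does not change the underlying argument.
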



 



\chapter{Invertible Natural Transformations of \texorpdfstring{$\Z_\K(-,-)$}{Zk(-,-)}}
\label{chap:chapter-5}
{
\section*{Introduction}
Given an abstract simplicial complex $\K$, the polyhedral product $\Z_{\K}(-,-)$ defines a functor from the category of based pairs $\mathbf{Pairs_*}$ to the category of based spaces $\mathbf{Top_*}$. So it is natural to ask what the set of natural isomorphims of this functor looks like. In this chapter, we will give a description of all the invertible natural transformations of polyhedral product functor to itself. We will prove that this set of natural isomorphisms are completely determined by the underlying simplicial complex $\K$.

\section{Functoriality of \texorpdfstring{$\Z_\K(-,-)$}{Zk(-,-)}}
Let $\K$ be a simplicial complex with vertex set $[n]=\{1,\cdots,n\}$. From the definition it is straightforward to check that $\Z_\K(-,-)$ is a functor from $\mathbf{Pairs_*}$ to $\mathbf{Top_*}$. To an object $(X,A)$ in $\mathbf{Pairs_*}$, polyhedral product functor assigns an object $\Z_\K(X,A)$ in $\mathbf{Top_*}$, and to a map of pointed pairs $f:(X,A)\to (Y,B)$ it assigns a pointed map $\Z_\K(f):\Z_\K(X,A)\to \Z_\K(Y,B)$ mapping $(x_1,\cdots,x_n)$ to $(f(x_1),\cdots,f(y_n))$. Let $Aut(\K)$ denote the set of simplicial isomorphisms of $\K$ to itself. For any element $\tau \in Aut(\K)$, there is an induced isomorphism of $\Z_\K(X,A)$ denoted by $\tau(\K): \Z_\K(X,A)\to \Z_\K(X,A)$ which maps $(x_1,\cdots,x_n)$ to $(x_{\tau(1)},\cdots,x_{\tau(n)})$. This definess a natural isomorphism of $\Z_\K(-,-)$ since the following diagram commutes 
\[ \begin{tikzcd}
\Z_\K(X,A) \arrow{r}{\Z_\K(f)} \arrow[swap]{d}{\tau(\K)} & \Z_\K(Y,B) \arrow{d}{\tau(\K)} \\%
\Z_\K(X,A) \arrow{r}{\Z_\K(f)}& \Z_\K(Y,B)
\end{tikzcd}
\]
\section{Natural Isomorphisms of \texorpdfstring{$\Z_\K(-,-)$}{Zk(-,-)}}
Let $Iso(\Z_\K,\Z_\K)$ denote the set of invertible natural transformations from $\Z_\K(-,-)$ to itself. From the above discussion, we have the following proposition.
\begin{proposition}
The map 
\begin{align*}
    \Phi: Aut(\K)&\to Iso(\Z_\K, \Z_\K)\\
    \tau &\mapsto \tau(\K)
\end{align*}
is an injective group homomorphism.
\end{proposition}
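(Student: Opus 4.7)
The plan is to break the statement into three pieces: well-definedness together with naturality, the group homomorphism property, and injectivity; the first two are unwindings of the definition, and the real content is in the last. First, I would verify that $\tau(\K)$ is a self-map of $\Z_\K(X,A)$: if $(x_1,\ldots,x_n)\in D(\sigma)$ for some $\sigma\in\K$, then permuting coordinates by $\tau$ lands the point in $D(\tau^{-1}(\sigma))$, which is again a face of $\K$ because $\tau$ is a simplicial automorphism. Naturality in $(X,A)$ is automatic because $\Z_\K(f)$ and $\tau(\K)$ act on disjoint data---one applies $f$ slot by slot, the other permutes slots---so they commute coordinate-wise, and the square displayed at the end of Section~2 of the excerpt commutes.

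Next I would check the group law by a direct computation:
$$\tau_1(\K)\circ\tau_2(\K)(x_1,\ldots,x_n)=\tau_1(\K)(x_{\tau_2(1)},\ldots,x_{\tau_2(n)})=(x_{\tau_2\tau_1(1)},\ldots,x_{\tau_2\tau_1(n)}).$$
I would fix a convention at the outset (replacing $\tau$ by $\tau^{-1}$ in the definition of $\tau(\K)$ if necessary) so that this matches $\Phi(\tau_1\tau_2)=\Phi(\tau_1)\circ\Phi(\tau_2)$ covariantly; this is purely a left-vs-right action choice and can be absorbed once and for all.

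The core of the argument is injectivity, and my plan is to exhibit a single test pair on which distinct automorphisms act distinctly. Take the discrete pointed CW-pair $(X,A)=(\{\ast,1,2,\ldots,n\},\{\ast\})$ with basepoint $\ast$; since every vertex $\{i\}$ is a face of $\K$, the point $p_i$ whose $i$-th coordinate is the symbol $i$ and whose other coordinates are all $\ast$ lies in $D(\{i\})\subset\Z_\K(X,A)$. A direct inspection shows that $\tau(\K)$ carries $p_i$ to the point whose unique non-basepoint entry is $i$, now in position $\tau^{-1}(i)$. If $\tau\in\ker\Phi$, then $\tau(\K)(p_i)=p_i$ for every $i$ forces $\tau^{-1}(i)=i$ for all $i\in[n]$, so $\tau$ is the identity automorphism of $\K$. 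The main obstacle, as far as there is one, is bookkeeping: pinning down the direction of the coordinate permutation so $\Phi$ is covariant, and confirming that the chosen test pair is admissible in whatever category of pointed pairs the polyhedral product functor is defined on; with those conventions set, each of the three steps is a short verification.
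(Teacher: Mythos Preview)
Your proof is correct. The paper itself does not supply an explicit proof for this proposition; it simply records it with the phrase ``From the above discussion, we have the following proposition,'' the discussion in question being the observation that $\tau(\K)$ is a well-defined self-homeomorphism of $\Z_\K(X,A)$ and that the naturality square commutes. Your write-up makes explicit what the paper leaves implicit, including the homomorphism/anti-homomorphism bookkeeping (which the paper ignores) and, more substantively, the injectivity argument.

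Your injectivity test via the pair $(\{\ast,1,\ldots,n\},\{\ast\})$ and the points $p_i$ is a clean addition. It is worth noting that this anticipates the technique the paper uses in the \emph{next} proposition: there, to construct the inverse map $\Theta$ and establish surjectivity, the paper evaluates a natural isomorphism on the discrete pair $(W,W)$ with $W=\{\ast,p_1,\ldots,p_n\}$ and tracks where the tuple $(p_1,\ldots,p_n)$ goes. So your method is very much in the spirit of the paper's overall strategy, just deployed one proposition earlier and with a slightly different choice of test pair (you restrict the $A$-slot to $\{\ast\}$, which is fine since every singleton $\{i\}$ is a face of $\K$).
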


We will show that this map is also surjective. More precisely, we will prove the following proposition.
\begin{proposition}\label{nat_iso}
The set of invertible natural transformations $Iso(\Z_\K, \Z_\K)$ is isomorphic to the automorphism group $Aut(\K)$.
\end{proposition}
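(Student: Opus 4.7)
The plan is to show that the injective homomorphism $\Phi\colon Aut(\K)\to Iso(\Z_\K,\Z_\K)$ from the previous proposition is surjective. Given any invertible natural transformation $\eta\in Iso(\Z_\K,\Z_\K)$, I will extract a permutation $\tau\colon[n]\to[n]$, show $\tau\in Aut(\K)$, and conclude that $\eta_{(X,A)}(x_1,\dots,x_n)=(x_{\tau(1)},\dots,x_{\tau(n)})$ for every pointed pair $(X,A)$.

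The central technical input is a classification lemma: for any $a,b\geq 0$, every natural transformation $T\colon X^a\times A^b\to X$ of functors on $\mathbf{Pairs_*}$ is either constant at the basepoint or a coordinate projection $\pi_i$ for some $i\in\{1,\dots,a+b\}$. To prove this, first restrict to the subcategory of pairs of the form $(X,X)$ (equivalent to $\mathbf{Top_*}$); naturality with the map of pairs $(X,A)\to(X,X)$ recovers $T^{(X,A)}$ as the restriction of $T^{(X,X)}$. The classification on $\mathbf{Top_*}$ then follows by a Yoneda-style argument: evaluate $T$ at the discrete pointed wedge $(\bigvee_{k}S^0,*)=\{*,v_1,\dots,v_k\}$ with $k=a+b$, and use naturality with the pointed map $\phi\colon\bigvee_{k}S^0\to X$ sending $v_i\mapsto x_i$ to force $T^X(x_1,\dots,x_k)=\phi(T^{\bigvee S^0}(v_1,\dots,v_k))$. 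Since $T^{\bigvee S^0}(v_1,\dots,v_k)$ is a fixed element of $\{*,v_1,\dots,v_k\}$, the map $T^X$ is either constant at the basepoint or a single coordinate projection.

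With this lemma in hand, for each $j\in[n]$ and $\sigma\in\K$ the restriction $z_j^\sigma:=\pi_j\circ\eta_{(X,A)}|_{D(\sigma)}\colon D(\sigma)\to X$ is natural on $\mathbf{Pairs_*}$, hence is either constant at the basepoint or a projection $\pi_{\tau(\sigma,j)}$. The inclusions $D(\sigma\cap\sigma')\subseteq D(\sigma)\cap D(\sigma')$ for $\sigma,\sigma'\in\K$ and the restriction compatibility of $z_j^\sigma$ force the index $\tau(\sigma,j)=\tau(j)$ to be independent of $\sigma$, yielding a single function $\tau\colon[n]\to[n]\cup\{0\}$, where $0$ encodes the constant case.

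Finally, invertibility of $\eta$ excludes the constant case: if $\tau(j)=0$ then the $j$-th coordinate of every $\eta_{(X,A)}(\vec y)$ is the basepoint, contradicting the surjectivity of $\eta$ onto the point $(x_0,\dots,x,\dots,x_0)\in\Z_\K(X,A)$ with $x\neq x_0$ in position $j$ (which lies in $\Z_\K(X,A)$ since $\{j\}\in\K$). The same surjectivity argument applied to $\eta^{-1}$ forces $\tau$ to be a bijection of $[n]$. Taking a pair $(X,A)$ with $A\subsetneq X$, such as $(D^1,S^0)$, and a point $\vec y\in D(\sigma)$ with $y_i\notin A$ for all $i\in\sigma$, the condition $\eta(\vec y)\in\Z_\K(X,A)$ reads $\tau^{-1}(\sigma)\in\K$ for every $\sigma\in\K$; the symmetric condition coming from $\eta^{-1}$ gives $\tau(\sigma)\in\K$, so $\tau\in Aut(\K)$. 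The resulting formula $\eta_{(X,A)}(y_1,\dots,y_n)=(y_{\tau(1)},\dots,y_{\tau(n)})=\Phi(\tau)(\vec y)$ then gives $\eta=\Phi(\tau)$. The main obstacle will be the classification lemma itself, in particular verifying that the pair structure introduces no exotic natural transformations beyond coordinate projections and the constant map.
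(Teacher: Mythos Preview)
Your proof is correct and shares the same core idea as the paper's: extract a permutation by probing $\eta$ on a finite discrete pointed space and then use naturality to propagate the formula to arbitrary pairs. Where you package this as a general classification lemma for natural transformations $X^a\times A^b\to X$ and then apply it face-by-face and coordinate-by-coordinate, the paper works globally in a single stroke by evaluating $\eta$ at the pair $(W,W)$ with $W=\{*,p_1,\dots,p_n\}$: since $\Z_\K(W,W)=W^n$, one just reads off the image of the single point $(p_1,\dots,p_n)$ and uses naturality with maps $W\to X$ to get the formula on $(X,X)$, then the inclusion $(X,A)\hookrightarrow(X,X)$ for general pairs. This sidesteps your gluing step entirely, because the face decomposition collapses when $A=X$. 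Your classification lemma is a pleasant abstraction and would be reusable in other contexts; the paper's argument is shorter and more direct for this specific statement. The verification that $\sigma\in Aut(\K)$ is essentially the same in both: test on a pair with $A\subsetneq X$ and enough distinct non-$A$ points.
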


\textbf{Construction of inverse map ($\Theta$) of $\Phi$:} Pick an arbitrary natural isomorphism $\eta\in Iso(\Z_\K, \Z_\K)$ and let $W = \{*, p_1,\cdots,p_n\}$ be a discrete space with $n+1$ points, $*$ as base point. Then by definition $\Z_\K(W,W)= W^n$ and $\eta_{(W,W)}$ is an isomorphism from $W^n$ to itself. Due to the naturality of $\eta$, we have the following commuting diagram for any map $f:W\to W$.

\[
\begin{tikzcd}[column sep=large]
W^n \arrow{r} {\Z_\K(f)} \arrow[swap]{d} {\eta_{(W,W)}}
& W^n \arrow{d} {\eta_{(W,W)}} \\
W^n \arrow{r} {\Z_\K(f)} & W^n
\end{tikzcd}
\]
Let $\Sigma_n$ denote the symmetric group on $n$ letters. We prove proposition \ref{nat_iso} in three steps. First, we consider the discrete space $W$ and show that there is an $\sigma \in \Sigma_n$ which corresponds to $\eta_{(W,W)}$. Second, we show that $\eta$ is completely determined by this $\sigma$. Finally, we show that $\Phi: Iso(\Z_\K,\Z_\K)\to Aut(\K), \eta \mapsto \sigma$ gives a well defined map. 

\begin{lemma} 
There exists $\sigma \in \Sigma_n$ such that $\eta_{(W,W)}(p_1,\cdots, p_n) = (p_{\sigma(1)},\cdots, p_{\sigma(n)})$.

\begin{proof} Let $\eta_{(W,W)}(p_1,\cdots, p_n) = (u_1,\cdots, u_n)$. Now consider the set $U = \{u_1, \cdots,$ $u_n\}$. We claim that $U$ contains all the points $p_i$ for $i=1,...,n$. We will prove by contradiction. Assume that there exists some $p_i$ (say $p_1$ WLOG) which is not in $U$. Define a function $f: X\to X$ which maps $p_1$ to the base point $*$, and fixes every other points. Then $\Z_\K(f)(p_1, \cdots, p_n) = (*,p_2\cdots, p_n )$ and $\Z_\K(f)(u_1, \cdots, u_n) = (u_1, \cdots, u_n)$ (since $f$ keeps all of $u_i$'s fixed). Since $\eta_{(W,W)}$ commutes with $\Z_\K(f)$, we see that $\eta_{(W,W)}$ maps both $(p_1,\cdots,p_n)$ and $( *,p_2,\cdots, p_n )$  to $(u_1,\cdots,u_n)$, which is a contradiction as $\eta_{(W,W)}$ is an isomorphism. This implies that $(u_1,\cdots,u_n)$ is a permutation of $(p_1,\cdots,p_n)$. Therefore we can choose a permutation $\sigma \in \Sigma_n$ such that $\eta_{(W,W)}(p_1,\cdots, p_n) = (p_{\sigma(1)},\cdots, p_{\sigma(n)})$. 
\end{proof}
\end{lemma}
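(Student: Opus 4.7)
The plan is to exploit naturality of $\eta$ at the concrete pair $(W,W)$, where $\Z_\K(W,W) = W^n$, by testing against carefully chosen pointed self-maps $f: W \to W$. Write $\eta_{(W,W)}(p_1,\ldots,p_n) = (u_1,\ldots,u_n)$ with each $u_i \in W = \{*, p_1, \ldots, p_n\}$. The goal is to show that $\{u_1, \ldots, u_n\} = \{p_1, \ldots, p_n\}$ with no repetitions and no occurrence of the basepoint, so that a permutation $\sigma \in \Sigma_n$ is unambiguously defined by $u_i = p_{\sigma(i)}$.

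The first substep is to show that every $p_j$ occurs in some coordinate of $(u_1,\ldots,u_n)$. I would argue by contradiction: if some $p_j$ never appears among the $u_i$, consider the pointed map $f_j: W \to W$ sending $p_j \mapsto *$ and fixing the remaining points. Then $\Z_\K(f_j)$ fixes $(u_1,\ldots,u_n)$ coordinatewise, while it sends $(p_1,\ldots,p_n)$ to the distinct tuple $(p_1,\ldots,p_{j-1},*,p_{j+1},\ldots,p_n)$. The naturality square for $\eta$ applied to $f_j$ then forces $\eta_{(W,W)}$ to send both of these distinct tuples to the same image $(u_1,\ldots,u_n)$, contradicting the bijectivity of $\eta_{(W,W)}$.

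Once every $p_j$ appears in the $n$-tuple $(u_1,\ldots,u_n)$, a pigeonhole count immediately forces each to appear exactly once (and leaves no room for $*$), so the desired $\sigma \in \Sigma_n$ is well-defined.

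The main point to verify before running this argument is that $\Z_\K(W,W) = W^n$ for every $\K$ (so $\eta_{(W,W)}$ really is a self-bijection of $W^n$) and that $\Z_\K(f)$ acts coordinatewise by $f$; both are immediate from the definition because $X = A = W$ collapses the piecewise rule for $D(\sigma)$. With those observations, the argument becomes a clean formal manipulation and I do not anticipate a serious obstacle in this lemma itself — the subtler work is reserved for the subsequent steps of \cref{nat_iso}, where one must show that the $\sigma$ extracted here actually lies in $Aut(\K)$ (by testing naturality at pairs that distinguish faces of $\K$ from non-faces) and that $\eta$ is globally determined by this $\sigma$.
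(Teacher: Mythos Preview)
Your proposal is correct and follows essentially the same argument as the paper: both set $\eta_{(W,W)}(p_1,\ldots,p_n)=(u_1,\ldots,u_n)$, assume some $p_j$ is missing from the $u_i$, use the pointed self-map $f_j$ collapsing $p_j$ to $*$ to produce two distinct tuples with the same $\eta$-image via naturality, and conclude by pigeonhole. Your write-up is slightly more explicit about the pigeonhole step and the preliminary observation that $\Z_\K(W,W)=W^n$, but the core idea is identical.
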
 
Now fix the above permutation $\sigma\in \Sigma_n$ for $\eta$. In the following lemma we will show that $\eta$ is completely determined by this $\sigma$.

\begin{lemma}
Let $(X,A)$ be any arbitrary pair and $(x_1,\cdots,x_n)\in \Z_\K(X,A)$. Then $\eta_{(X,A)}(x_1,\cdots, x_n) = (x_{\sigma(1)},\cdots, x_{\sigma(n)})$.
\end{lemma}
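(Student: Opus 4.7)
The plan is to reduce the general statement to the already-established discrete case by constructing, for each pair $(X,A)$ and each point $(x_1,\ldots,x_n) \in \Z_\K(X,A)$, a map of pairs out of a small discrete pair that hits that point, then invoking naturality of $\eta$ twice. Given $(x_1,\ldots,x_n) \in \Z_\K(X,A)$, choose a face $\sigma_0 \in \K$ such that $x_i \in X$ for $i \in \sigma_0$ and $x_i \in A$ for $i \notin \sigma_0$. Define the discrete subspace $V' = \{*\} \cup \{p_i : i \notin \sigma_0\} \subseteq W$. Then $(p_1,\ldots,p_n) \in \Z_\K(W, V')$, witnessed by $\sigma_0$, and the pointed map $f\colon W \to X$ with $f(p_i) = x_i$ is a map of pairs $f\colon (W, V') \to (X, A)$ because $f(V') = \{x_i : i \notin \sigma_0\} \cup \{*\} \subseteq A$. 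By functoriality $\Z_\K(f)(p_1,\ldots,p_n) = (x_1,\ldots,x_n)$.

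Next I would show $\eta_{(W,V')}(p_1,\ldots,p_n) = (p_{\sigma(1)},\ldots,p_{\sigma(n)})$. For this, the identity on $W$ gives an inclusion of pairs $\iota\colon (W, V') \hookrightarrow (W, W)$, and $\Z_\K(\iota)\colon \Z_\K(W,V') \hookrightarrow \Z_\K(W,W) = W^n$ is just the canonical subspace inclusion, sending $(p_1,\ldots,p_n)$ to itself. Naturality of $\eta$ along $\iota$ gives
\[
\Z_\K(\iota)\circ \eta_{(W,V')} \;=\; \eta_{(W,W)}\circ \Z_\K(\iota),
\]
and evaluating at $(p_1,\ldots,p_n)$ and using the previous lemma on the right yields $\Z_\K(\iota)\,\eta_{(W,V')}(p_1,\ldots,p_n) = (p_{\sigma(1)},\ldots,p_{\sigma(n)})$. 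Since $\Z_\K(\iota)$ is injective, we conclude $\eta_{(W,V')}(p_1,\ldots,p_n) = (p_{\sigma(1)},\ldots,p_{\sigma(n)})$. Finally, naturality of $\eta$ along $f$ gives
\[
\eta_{(X,A)}(x_1,\ldots,x_n) \;=\; \Z_\K(f)\,\eta_{(W,V')}(p_1,\ldots,p_n) \;=\; (x_{\sigma(1)},\ldots, x_{\sigma(n)}),
\]
as desired.

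The main subtlety is in the middle step: the output $(p_{\sigma(1)},\ldots,p_{\sigma(n)})$ must actually live in the subspace $\Z_\K(W,V')$ in order for the injectivity argument to yield an equality there. This is automatic because $\eta_{(W,V')}$ is an endomorphism of $\Z_\K(W,V')$, but it carries real content: it forces $\sigma^{-1}(\sigma_0)$ to be contained in some face of $\K$ for every face $\sigma_0$, so $\sigma^{-1}$ (and hence $\sigma$) respects the simplicial structure. This is exactly the information needed for the third step of the proposition, namely that $\Theta(\eta) := \sigma$ defines a well-defined map into $\mathrm{Aut}(\K)$ and thus inverts $\Phi$.
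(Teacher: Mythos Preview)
Your argument is correct. Both proofs anchor everything to the previously established computation of $\eta_{(W,W)}$ and then transfer it by two applications of naturality, but you and the paper factor through different intermediate pairs. The paper first pushes along $f\colon (W,W)\to (X,X)$ to get $\eta_{(X,X)}(x_1,\ldots,x_n)=(x_{\sigma(1)},\ldots,x_{\sigma(n)})$ for \emph{all} points of $X^n$, and only then restricts via the inclusion $(X,A)\hookrightarrow (X,X)$; no face $\sigma_0$ or auxiliary subspace $V'$ is ever chosen. You instead restrict first on the discrete side, building $(W,V')$ tailored to a witnessing face, and then push forward along $f\colon (W,V')\to (X,A)$. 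The paper's route is a bit cleaner because it avoids the point-dependent construction of $V'$, while your route has the pleasant side effect you note: the fact that $\eta_{(W,V')}$ lands back in $\Z_\K(W,V')$ already forces $\sigma^{-1}(\sigma_0)$ to lie in a face of $\K$, which is precisely the content of the next lemma in the paper (there proved by a separate contradiction argument).
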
 
\begin{proof} First we will consider the pair $(X,X)$. Let $W$ be defined above (containing $n+1$ discrete points with a base point). Define $f:W\to X$ as $f(p_i)=x_i$ for $i=1,\cdots, n.$ Since the following diagram commutes we have $\eta_{(X,X)}(x_1,\cdots,x_n) = \eta_{(X,X)}(f(p_1),\cdots,f(p_n)) = \eta_{(X,X)}\circ \Z_\K(f)(p_1,\cdots,p_n) =\Z_\K(f)\circ \eta_{(W,W)}(p_1, \cdots, p_n)=\Z_\K(f)(p_{\sigma(1)},\cdots, p_{\sigma(n)})=(f(p_{\sigma(1)}), \cdots, f(p_{\sigma_n}))= (x_{\sigma(1)},\cdots, x_{\sigma(n)})$.
\[
\begin{tikzcd}[ column sep=large]
W^n \arrow{r} {\Z_\K(f)} \arrow[swap]{d} {\eta_{(W,W)}}
& X^n \arrow{d} {\eta_{(X,X)}} \\
W^n \arrow{r} {\Z_\K(f)} & X^n
\end{tikzcd}
\]
Now for any pair $(X,A)$, we have an inclusion map $\Z_\K(X,A)\xhookrightarrow{} \Z_\K(X,X)$ which satisfies the following commuting diagram.
\begin{equation*}
\begin{tikzcd}
\Z_\K(X,A)\arrow[swap]{d}{\eta_{(X,A)}} \arrow[r,hook]
& \Z_\K(X,X) \arrow{d}{\eta_{(X,X)}} \\
\Z_\K(X,A) \arrow[r, hook]
&  \Z_\K(X,X)
\end{tikzcd}
\end{equation*}
This commutative diagram implies that for any $(x_1,\cdots,x_n)\in \Z_\K(X,A)$, we have $\eta_{(X,A)}(x_1,\cdots,x_n)= (x_{\sigma(1)},\cdots,x_{\sigma(n)})$. 
\end{proof}
We have proved that $\eta$ is completely determined by $\sigma\in \Sigma_n$. Now define $\Theta(\eta)= \sigma$. In the next lemma, we will show that this gives us a well defined map $\Theta: Iso(\Z_\K,\Z_\K)\to Aut(\K)$.

\begin{lemma}
$\sigma$ is an element in $Aut(\K)$.

\begin{proof}
Prove by contradiction. If $\sigma\notin Aut(\K)$, then there exists a maximal face $F\subset \K$ such that $\sigma(F)$ is not a maximal face of $\K$. Let $F = \{i_1,\cdots,i_k\}$ and $\sigma(i_s) = j_s$ for $s = 1,\cdots, k$. So $\sigma(F) = \{j_1,\cdots,j_k\}$ is not a maximal face of $\K$. Now consider a pair $(X,A)$ where $X = \{*, p_1,\cdots,p_n, q_1,\cdots, q_n\}$ and $A = \{*,q_1, \cdots, q_n\}$. Pick a point $(y_1, \cdots, y_n)\in \Z_\K(X,A)$ where 
$$
y_l = \left\{\begin{array}{lr}
        p_l, & \text{if } l\in F\\
        q_l, & \text{if } l\notin F\\
        \end{array}\right.
$$
Now note that $\eta_{(X,A)}(y_1,\cdots, y_n) =(y_{\sigma(1)},\cdots, y_{\sigma(n)})= (z_1, \cdots, z_n)$ where
$$
z_l = \left\{\begin{array}{lr}
        p_l & \text{if } l\in \sigma(F)\\
        q_l  & \text{if } l\notin \sigma(F)\\
        \end{array}\right.
$$
But this is not possible since $\sigma(F)$ is not a maximal face and hence $(z_1,\cdots, z_n)\notin \Z_{\K}(X,A)$. This implies $\sigma$ must be an element in $Aut(\K)$.
\end{proof}
\end{lemma}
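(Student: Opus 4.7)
The plan is to argue by contradiction, producing a pair $(X,A)$ and a point of $\Z_\K(X,A)$ whose image under $\eta_{(X,A)}$ cannot possibly lie in $\Z_\K(X,A)$, contradicting that $\eta_{(X,A)}$ is a self-map of $\Z_\K(X,A)$. The key leverage is the previous lemma, which forces $\eta_{(X,A)}$ to act on coordinates by the single permutation $\sigma \in \Sigma_n$ extracted from the discrete case, so naturality propagates any defect in face-preservation of $\sigma$ into an honest set-theoretic obstruction.

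Suppose, for contradiction, that $\sigma \notin Aut(\K)$. Since $\sigma$ is a bijection of $[n]$ and the set of faces of $\K$ is finite, failure to be a simplicial automorphism produces a face $F \in \K$ whose preimage $\sigma^{-1}(F) \subset [n]$ is not contained in any face of $\K$ (equivalently, choose a maximal face $F$ whose image under $\sigma$ fails to be a face, and pass to the appropriate preimage). I would next construct the test pair $(X,A)$ with $X = \{*,\, p_1,\ldots,p_n,\, q_1,\ldots,q_n\}$ and $A = \{*,\, q_1,\ldots,q_n\}$, so that a point of $\Z_\K(X,A)$ is exactly an $n$-tuple whose set of coordinates lying in $X\setminus A$ (the ``support'') is contained in some face of $\K$. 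Define the point $(y_1,\ldots,y_n)$ by $y_l = p_l$ if $l \in F$ and $y_l = q_l$ if $l \notin F$; its support is $F$, so this tuple indeed belongs to $\Z_\K(X,A)$.

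Applying the previous lemma gives $\eta_{(X,A)}(y_1,\ldots,y_n) = (y_{\sigma(1)},\ldots,y_{\sigma(n)})$. The $l$-th coordinate of this image lies in $X \setminus A$ precisely when $\sigma(l) \in F$, i.e.\ when $l \in \sigma^{-1}(F)$, so the support of the image tuple equals $\sigma^{-1}(F)$. By the choice of $F$, this support is not contained in any face of $\K$, and hence the image fails to lie in $\Z_\K(X,A)$. This contradicts the fact that $\eta_{(X,A)}$ takes values in $\Z_\K(X,A)$, so $\sigma \in Aut(\K)$.

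The only delicate point I anticipate is the bookkeeping of $\sigma$ versus $\sigma^{-1}$ when translating ``$\sigma$ permutes coordinates'' into ``$\sigma$ permutes the support''; choosing $F$ so that $\sigma^{-1}(F)$ (rather than $\sigma(F)$) is the offending set makes the contradiction appear directly in the support computation and avoids having to invoke the inverse $\eta^{-1}$. Otherwise the argument is a pure naturality-and-support manipulation with no topological input beyond the discreteness of the test spaces.
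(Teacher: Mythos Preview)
Your proof is correct and takes essentially the same approach as the paper: test against the discrete pair $(X,A)=(\{*,p_i,q_i\}_{i},\{*,q_i\}_{i})$ and track the ``support'' of a carefully chosen point to obtain a contradiction. Your attention to the $\sigma$ versus $\sigma^{-1}$ bookkeeping is warranted and in fact sharper than the paper's own write-up, which asserts the image has support $\sigma(F)$ when---under its convention $\eta(x_1,\ldots,x_n)=(x_{\sigma(1)},\ldots,x_{\sigma(n)})$---the support is actually $\sigma^{-1}(F)$; your choice to arrange $\sigma^{-1}(F)\notin\K$ from the outset is exactly the clean fix.
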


Hence, proposition \ref{nat_iso} is proved. 

}

{ \newcommand{\zn}{\mathbb{Z}_n}
  \newcommand{\zd}{\mathbb{Z}_d}
  \newcommand{\ZZ}{\mathbb{Z}}
  
\chapter{The Homology of \texorpdfstring{$\M$}{Zk} as a \texorpdfstring{$\mathbb{Z}[\zn]$}{Z[Zn]}-module}
\label{chap:chapter-6}

\section*{Introduction}

In this chapter we will give a complete description of the homology of $\M$ as a $\mathbb{Z}[\zn]$-module where $\K_n$ is the boundary of an $n$-gon ($n\geq 3$). The result is mainly based on the work of Ali Al-Raisi (\cite{Ali}) who showed that  that $Aut(\K_n)$ acts in a natural way on the stable splitting of $\Z_{\K_n}(D^1,S^0)$. We consider the $\zn$-action on the homology of $\M$ and describe the action as a representation of the cyclic group $\zn$. This computation is related to the classical counting problem of Lyndon words and necklace 2-coloring.

\section{Notations and Preliminaries}
 From this chapter on we always assume that $n\geq3$. Let $\K$ be an abstract simplicial complex on vertex set $[n]=\{1,\cdots,n\}$.  Given a sequence $I= (i_1,\cdots,i_k)$ with $1\leq i_1< \cdots< i_k\leq n$, define $\K_I = \{\tau\cap I|\tau \in \K \}$. The next proposition follows from the corollary 2.24 of \cite{BBCG}.
 
 \begin{proposition}\label{prop1}
 Let $\K$ be an abstract simplicial complex. Then there exists homotopy equivalences
 \[
 \Sigma\Z_{\K}(D^{m+1},S^m) \to \Sigma(\bigvee_{I\notin \K} \abs{\K}* S^{m\abs{I}})\to \bigvee_{I\notin \K} \Sigma^{2+m\abs{I}}\abs{\K_I}
 \]
 \end{proposition}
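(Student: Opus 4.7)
The plan is to specialize the BBCG stable splitting (Corollary~2.24 of \cite{BBCG}, which is the theorem quoted earlier in the excerpt) to the pair $(D^{m+1},S^m)$ and then reduce each smash summand by recognizing it as an iterated suspension.

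First I would apply the BBCG stable splitting directly to obtain
\[
\Sigma\,\Z_{\K}(D^{m+1},S^m) \;\simeq\; \Sigma\bigvee_{I\subset [n]} \widehat{\Z}_{\K_I}(D^{m+1},S^m).
\]
Next, I would identify each smash summand. Since $D^{m+1}\cong CS^m$ is the reduced cone on $S^m$, an inductive cofibration argument indexed by the faces of $\K_I$ (using the coordinate-wise cofibration $S^m\hookrightarrow D^{m+1}\to S^{m+1}$), or equivalently the BBCG identification for the cone pair, yields
\[
\widehat{\Z}_{\K_I}(D^{m+1},S^m) \;\simeq\; \abs{\K_I}\ast S^{m\abs{I}}.
\]
Then I would observe that when $I\in\K$ the subcomplex $\K_I$ is the full simplex on $I$, so $\abs{\K_I}$ is contractible and the join $\abs{\K_I}\ast S^{m\abs{I}}$ is contractible. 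These summands vanish up to homotopy, and the wedge may be restricted to $I\notin\K$; this produces the first displayed equivalence.

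Finally, the second equivalence is suspension bookkeeping. From the standard identity $X\ast Y\simeq \Sigma(X\wedge Y)$ together with $S^{m\abs{I}}\wedge \abs{\K_I}=\Sigma^{m\abs{I}}\abs{\K_I}$, one obtains
\[
\Sigma\bigl(\abs{\K_I}\ast S^{m\abs{I}}\bigr) \;\simeq\; \Sigma^{2}\bigl(\abs{\K_I}\wedge S^{m\abs{I}}\bigr) \;=\; \Sigma^{2+m\abs{I}}\abs{\K_I},
\]
and suspension distributes across the wedge, giving the second equivalence.

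The only nontrivial step is the cone-pair identification $\widehat{\Z}_{\K_I}(CA,A)\simeq \abs{\K_I}\ast A^{\wedge \abs{I}}$ used in the second step; this is essentially the content of BBCG's Corollary~2.24 for the cone case and can be verified either by induction on $\abs{I}$ via the coordinate cofibration or by writing down an explicit homeomorphism from the cell structure of the polyhedral join. Everything else is formal manipulation of wedges and suspensions.
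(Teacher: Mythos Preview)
Your proposal is correct and matches the paper's approach exactly: the paper does not give an independent proof but simply states that the proposition ``follows from the corollary~2.24 of \cite{BBCG},'' and your outline is precisely the content of that corollary --- apply the BBCG wedge decomposition, identify the smash polyhedral product for the cone pair $(CS^m,S^m)$ as $\abs{\K_I}\ast S^{m\abs{I}}$, discard the contractible summands with $I\in\K$, and unwind the join as an iterated suspension. There is nothing to add.
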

 
 Let $\K_n$ denote the boundary of an $n$-gon. The following corollary is an immediate consequence of \cref{prop1}.
 
 \begin{corollary}
 There exists a homotopy equivalence $$H: \Sigma \Z_\K(D^1,S^0) \to \bigvee_{I\notin \K} \Sigma^{2}\abs{\K_I}.$$
 \end{corollary}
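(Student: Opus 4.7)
The statement follows immediately from Proposition~\ref{prop1} by specialization. My plan is to set $m=0$ in the proposition. Then the pair $(D^{m+1},S^m)$ becomes $(D^1,S^0)$, so the left-hand side $\Sigma\Z_\K(D^{m+1},S^m)$ of the equivalence in Proposition~\ref{prop1} matches the domain of $H$ in the corollary, and the suspension exponent $2+m\abs{I}$ on the right-hand side collapses uniformly to $2$ for every $I$, giving the wedge $\bigvee_{I\notin\K}\Sigma^2\abs{\K_I}$.

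Concretely, $H$ is defined as the composition of the two homotopy equivalences supplied by Proposition~\ref{prop1}:
\[
\Sigma\Z_\K(D^1,S^0) \xrightarrow{\ \simeq\ } \Sigma\Bigl(\bigvee_{I\notin\K} \abs{\K}*S^0\Bigr) \xrightarrow{\ \simeq\ } \bigvee_{I\notin\K} \Sigma^2\abs{\K_I},
\]
using the join identity $A*S^0\simeq\Sigma A$ together with the outer suspension to account for the $\Sigma^2$ on each wedge summand. The indexing set on the right is the collection of non-faces $I\subset[n]$ of $\K$, which for $\K=\K_n$ consists of all subsets of cardinality $\geq 2$ that are not edges of the $n$-gon; this combinatorial description will be needed later in the chapter when extracting bases for the homology groups.

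There is essentially no obstacle at this step: the corollary is a direct specialization of Proposition~\ref{prop1}, and the substantive content (the BBCG stable splitting and its identification of the smash polyhedral products in terms of the full subcomplexes $\K_I$) is already encoded in that proposition. The real work of the chapter will be the representation-theoretic analysis of the induced $\z_n$-action on these wedge summands, not the decomposition itself.
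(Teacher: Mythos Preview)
Your proof is correct and takes exactly the same approach as the paper, which simply states that the corollary ``is an immediate consequence of'' Proposition~\ref{prop1}; you have just made the substitution $m=0$ explicit. One small typographical point: in your displayed composition the middle wedge should read $\abs{\K_I}*S^0$ rather than $\abs{\K}*S^0$ (the paper's statement of Proposition~\ref{prop1} contains the same slip), since it is the full subcomplex $\K_I$ that appears in each summand.
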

 
 \begin{example}
 Let $\K$ be the boundary of a 4-gon. Then $\Sigma\Z_\K(D^1,S^0)= \Sigma(\partial (D^1\times D^1)\times \partial (D^1\times D^1)) = \Sigma(S^1\times S^1) = \Sigma S^1 \vee \Sigma S^1 \vee \Sigma (S^1\wedge S^1) \approx S^2 \vee S^2 \vee S^3$.
 There are three sequences $I$ for which $\K_I$ is non-trivial. We get $\Sigma^2 \abs{\K_I} = S^2$ for $I=\{1,3\} \mbox{ or } \{2,4\}$, and $\Sigma^2 \abs{\K_I} = S^3$ for $I=\{1,2,3,4\}$. This gives $\bigvee_{I\notin \K} \Sigma^{2}\abs{\K_I} = S^2\vee S^2 \vee S^3$. 
 
 \end{example}

 It is natural to ask whether this suspension admits an $Aut(\K)$-equivariant homotopy equivalence to the stable decomposition. Ali Al-Raisi gave an explicit formula of this $Aut(\K)$-equivariant map in ~\cite{Ali} from which one can get the following proposition.
 \begin{proposition}\label{ali_prop}
 There exists an $Aut(\K)$-equivariant homotopy equivalence 
 \begin{equation}\label{eq:ali_prop}
 H: \Sigma \Z_\K(D^1,S^0) \to \bigvee_{I\notin \K} \Sigma^{2}\abs{\K_I}.
 \end{equation}

 \end{proposition}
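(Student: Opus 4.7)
The plan is to appeal to the explicit description of the stable splitting map from \cite{BBCG,Ali} and verify that each of its building blocks is natural with respect to coordinate permutations coming from $Aut(\K)$. First I would recall the non-equivariant BBCG splitting $\Sigma\Z_{\K}(\underline{X},\underline{A}) \simeq \Sigma \bigvee_{I\subset[n]} \widehat{\Z}_{\K_I}(X_I,A_I)$, which in the case $(\underline{X},\underline{A})=(D^1,S^0)$ identifies the right hand side with $\bigvee_{I\notin \K}\Sigma^2|\K_I|$ using the contractibility of $D^1$ and the formula $\widehat{\Z}_{\K_I}(D^1,S^0)\simeq \Sigma|\K_I|$.

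Next, I would make the two $Aut(\K)$-actions explicit. On the left, $\tau\in Aut(\K)$ acts by $(x_1,\dots,x_n)\mapsto(x_{\tau(1)},\dots,x_{\tau(n)})$, and this extends to $\Sigma\Z_{\K}(D^1,S^0)$ by acting trivially on the suspension coordinate. On the right, $\tau$ permutes the indexing set by $I\mapsto \tau(I)$, and since $\tau$ is a simplicial automorphism one has an induced isomorphism of full subcomplexes $\tau_I\colon \K_I\xrightarrow{\cong}\K_{\tau(I)}$, inducing a homeomorphism $\Sigma^2|\K_I|\to \Sigma^2|\K_{\tau(I)}|$. The action on the wedge sends the $I$-th summand to the $\tau(I)$-th summand via $\Sigma^2|\tau_I|$. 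In particular, the indexing condition $I\notin \K$ is preserved by $\tau$, so the wedge as a whole is preserved.

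With these two actions in place, the core of the argument is to verify that Al-Raisi's formula for $H$ commutes with the actions on the nose (or at least up to pointed homotopy). This reduces to checking, for each $I\notin \K$, that the composite of the inclusion $\Sigma\widehat{\Z}_{\K_I}(D^1,S^0)\hookrightarrow \Sigma\Z_\K(D^1,S^0)$ and the $I$-th projection is intertwined, via $\tau$, with the analogous composite for $\tau(I)$. Because the inclusion is literally given by inserting basepoints in the coordinates outside $I$ and the projection to $\widehat{\Z}_{\K_I}$ is coordinate-wise smash, the verification is essentially a diagram chase on coordinates, using that $\tau$ sends the subproduct indexed by $I$ to the subproduct indexed by $\tau(I)$.

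The main obstacle is purely bookkeeping: the BBCG splitting map, while conceptually canonical, is built from iterated James splittings that a priori depend on choices of ordering, and a careless choice would only yield $\Sigma_n$-equivariance after averaging. The point of Al-Raisi's construction is that the choices are made functorially in $\K$ (rather than in the ambient $[n]$), so that simplicial automorphisms of $\K$ automatically commute with the splitting. Once one writes down his formula and tracks a general $\tau\in Aut(\K)$ through each factor, the $Aut(\K)$-equivariance follows from the naturality of each step in his construction, and the homotopy equivalence claim is inherited from the underlying BBCG result.
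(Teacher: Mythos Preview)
The paper does not prove this proposition; it simply attributes the result to Al-Raisi's thesis \cite{Ali}, writing that ``Ali Al-Raisi gave an explicit formula of this $Aut(\K)$-equivariant map in~\cite{Ali} from which one can get the following proposition.'' Your outline---recalling the BBCG splitting, making both $Aut(\K)$-actions explicit, and then verifying that Al-Raisi's explicit splitting map intertwines them via a coordinate-wise diagram chase---is exactly the shape such a proof takes, and is consistent with what the paper invokes.
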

 
 Let $\zn=\langle \sigma \rangle$ be the cyclic group of order $n$.  From Ali's formula it can be showed that the action of $\zn$ on the right hand side of equation \ref{ali_prop} is induced by $\sigma(\K_I): \K_I\to K_{\sigma(I)}$ which sends vertex $i$ to $\sigma(i)$. This gives a $\zn$-invariant decomposition of the homology of $\M$. First we prove that $\zn$ acts trivially on $H_0$ and $H_2$.
\begin{lemma}
The $\z_n$-action on $H_0(\M)$ and $H_2(\M)$ is trivial.
\end{lemma}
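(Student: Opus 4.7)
The plan is to leverage the global structure of $\M$ as a closed, connected, orientable surface established in Chapter~\ref{chap:chapter-2}, rather than going through the stable decomposition. Recall that we proved $\M$ is a closed, compact, orientable surface of genus $1+(n-4)2^{n-3}$, and also that the quotient $\M/\z_n$ is a closed, compact, orientable surface. Both of these facts, together with the observation that $\z_n$ acts on $\M$ via self-homeomorphisms, give the whole lemma almost for free.

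First I would handle $H_0(\M)$. Since $\M$ is path-connected for $n\geq 3$, $H_0(\M)\cong \mathbb{Z}$ generated by the class of any point. Any self-homeomorphism of a connected space permutes components trivially, hence induces the identity on $H_0$. Applying this to each generator $\sigma^k$ of $\zn$, the action on $H_0(\M)$ is trivial.

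Next I would handle $H_2(\M)$. Since $\M$ is a closed orientable surface, $H_2(\M)\cong \mathbb{Z}$ is generated by the fundamental class $[\M]$. The key point is that the generator $\sigma\in \z_n$ acts by an \emph{orientation-preserving} homeomorphism. To see this, note that we constructed an explicit orientation on $\M$ in the proof that $\M/\z_n$ is orientable (using the triangulation of the $2^{n-2}$ squares $D^1\times D^1\times S^0\times\cdots\times S^0$ into triangles with a consistent orientation on even versus odd squares), and that orientation was produced precisely so as to descend to the quotient; equivalently, the $\z_n$-action permutes the triangulation while preserving the prescribed orientation. Therefore $\sigma_*[\M]=[\M]$, so the action on $H_2(\M)$ is trivial.

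The only subtlety is the orientation-preserving claim for the $H_2$ case; the rest is essentially formal. If one prefers a more structural argument, it follows from the fact that a finite group of homeomorphisms of a closed orientable surface whose quotient is also orientable must act by orientation-preserving homeomorphisms, which is exactly the situation here by Theorem~\ref{theorem_quotient}. Either route yields the lemma.
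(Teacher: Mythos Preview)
Your proposal is correct and follows essentially the same approach as the paper: path-connectedness of $\M$ for $H_0$, and the explicit even/odd-square orientation (which the paper re-describes in its proof of this very lemma) to check that $\sigma$ is orientation-preserving for $H_2$. Your alternative route via Theorem~\ref{theorem_quotient} is a nice extra observation but is not used in the paper's argument.
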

\begin{proof}
First note that $\Z_{\K_n}(D^1,S^0)$ is a closed, compact and orientable surface. 
Let $\z_n=\langle \sigma \rangle$ and $f:\Delta^0\to \M,\ *\mapsto p$ be a generator of $H_0(\M)$. Since $\M$ is path-connected, one can show that $f$ and $\sigma(f)$ are homotopy equivalent. Hence $\sigma_*: H_0\to H_0$ is the identity map.
 Now we show the triviality of $\z_n$-action on $H_2$. It is enough to show that the $\z_n$-action preserves the orientation of the surface $\M$. Recall that $\M=\bigcup\limits_{\tau\subset \K_n} (D^1,S^0)^{\tau}$, where $\tau$ ranges over the maximal faces of $\K_n$. Each of the terms in this union corresponds to $2^{n-2}$ copies of squares. Pick a square of the form $\epsilon_1\times \cdots\epsilon_{i-1}\times \underbrace{D^1}_i\times \underbrace{D^1}_{i+1}\times\epsilon_{i+1}\cdots\times\epsilon_n$ where auxiliary coordinates $\epsilon$'s are 0 or 1 ( and note that active coordinates $i,i+1$ are taken modulo $n$). Now give an counterclockwise (clockwise) orientation to this square if the sum of the auxiliary coordinates is even (odd) as shown in \cref{fig:triangulation}. Now note that any two neighbouring squares' orientations are compatible which induces an orientation for $\M$.
 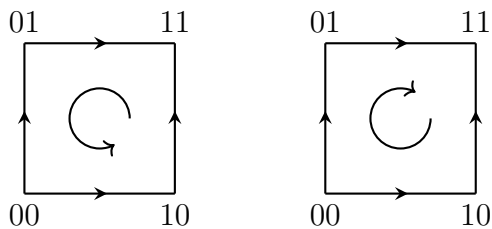
\begin{figure}[t]
  \begin{center}
\begin{tikzpicture}
\begin{scope}[thick,decoration={
    markings,
    mark=at position 0.55 with {\arrow[scale=1.2,>=stealth]{>}}}
    ] 
    \draw[postaction={decorate}] (-1,0)--(1,0);
    \draw[postaction={decorate}] (1,0)--(1,2);
    \draw[postaction={decorate}] (-1,2)--(1,2);
    \draw[postaction={decorate}] (-1,0)--(-1,2);
    \draw[postaction={decorate}] (3,0)--(5,0);
    \draw[postaction={decorate}] (5,0)--(5,2);
    \draw[postaction={decorate}] (3,2)--(5,2);
    \draw[postaction={decorate}] (3,0)--(3,2);
    \node [below] at (-1,0) {$00$};
    \node [below] at (1,0) {$10$};
    \node [above] at (-1,2) {$01$};
    \node [above] at (1,2) {$11$};
    
    \node [below] at (3,0) {$00$};
    \node [below] at (5,0) {$10$};
    \node [above] at (3,2) {$01$};
    \node [above] at (5,2) {$11$};
    \draw[thick, ->] (0.4,1) arc (0:300:.4cm);
    \draw[thick, ->] (4.4,1) arc (0:-300:.4cm);
\end{scope}
\end{tikzpicture}
  \end{center}
  \caption[Orientation of a 2-cell in $\M$]{Orientation of a 2-cell in $\M$. Left: even squares, right: odd squares. The coordinates of four vertices of the square are shown as $00,10,11,01$. These correspond to the $i$-th and $(i+1)\mod n$-th coordinate.}\label{fig:triangulation}
\end{figure}

 One can check that this orientation is preserved by the $\z_n$-action on $\M$, since the action is rotating the coordinates one by one. Hence the conclusion follows.
\end{proof}
Since $\z_n$ acts trivially on $H_0$ and $H_2$, we study the action of $\zn$ on the one dimensional homology $H_1(\M)$.  
 
 \section{Lyndon Words and Gap Number of Primitive Necklaces}
 
\begin{definition}
 A $k$-ary Lyndon word of length $n$ in an alphabet of size $k$ is an $n$-length word which is the smallest element in the lexicographical ordering of all its circular rotations (\cite{lothaire}). We define a rotation of a word $w=a_1a_2\cdots a_n$ as $r(w)= a_na_1a_2\cdots a_{n-1}$.
\end{definition}
Let $R(w)$ denote the set of all rotations of a word $w$. Clearly, if $w$ is a Lyndon word of length $n$, then $\abs{R(w)}=n$.  Let $\mathfrak{L}_n$ denote the set of $n$-length Lyndon words in alphabet $\{0,1\}$. Since each Lyndon word of length $n$ has exactly $n$ distinct rotations, it follows that
 $$
 2^n = \sum_{d|n}\ d \ \abs{\mathfrak{L}_d}
 $$
By M\"obius inversion formula one can calculate the number of Lyndon words of length $n$ as shown in \cref{eq:witts}. This is called the Witt's formula \cite{jacobson1979lie,bourbaki1994lie} or  Moreau's necklace counting function \cite{moreau1872permutations},
 \begin{equation}\label{eq:witts}
 \abs{\mathfrak{L}_n} = \frac{1}{n}\sum_{d|n}\mu(d)2^{n/d}
 \end{equation}
 Here, $\mu$ denotes the M\"obius function. 
 
 \begin{definition}
 Given a Lyndon word $w\in \mathfrak{L}_n$, define the gap number $\iota(w)$ as the number of times 01 appears as a substring of $w$. 
 \end{definition}
  We describe a correspondence between the stable summands of eq. \ref{eq:ali_prop} in \cref{ali_prop} and Lyndon words of length $d$ with $d\mid n$. If $w\in\mathfrak{L}_d$ where $d|n$, we can create a corresponding sequence $I\subset [n]$ by repeating the Lyndon word $n/d$ times and then take a vertex at $0$'s positions. For example, let $w= 00101 \in \mathfrak{L}_5$ and $n=10$. We take the sequence $I=(1,2,4,6,7,9)$ which corresponds to the positions of 0's in $ww$. Also, note that $\iota(w)=2$, so there are $\iota(w) n/d-1=3$ basis elements (namely (1,4), (4,6), (6, 9)) corresponding to $\K_I$ in the stable splitting (~\cref{fig:necklace}). 
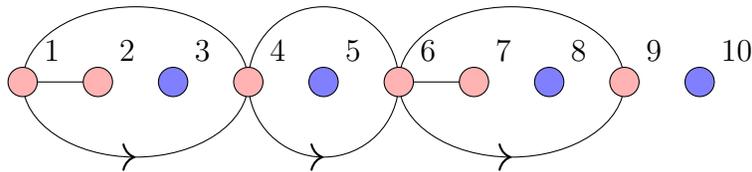
\begin{figure}
    \centering
    \begin{tikzpicture}
\draw[
        decoration={markings, mark=at position 0.75 with {\arrow[scale=2]{>}}},
        postaction={decorate}
        ]
        (4,0) circle (1)
        ;
\draw[
        decoration={markings, mark=at position 0.75 with {\arrow[scale=2]{>}}},
        postaction={decorate}
        ]
        (1.5,0) ellipse [x radius=1.5cm,y radius=1cm]
        ;
\draw[
        decoration={markings, mark=at position 0.75 with {\arrow[scale=2]{>}}},
        postaction={decorate}
        ]
        (6.5,0) ellipse [x radius=1.5cm,y radius=1cm]
        ;        
\draw (0,0)-- (1,0);
\draw (5,0)-- (6,0);

\draw (0,0) node[circle, draw, fill=red!30, label=north east:1] {};
\draw (1,0) node[circle, draw, fill=red!30, label=north east:2] {};
\draw (2,0) node[circle, draw, fill=blue!50, label=north east:3] {};
\draw (3,0) node[circle, draw, fill=red!30, label=north east:4] {};
\draw (4,0) node[circle, draw, fill=blue!50, label=north east:5] {};
\draw (5,0) node[circle, draw, fill=red!30, label=north east:6] {};
\draw (6,0) node[circle, draw, fill=red!30, label=north east:7] {};
\draw (7,0) node[circle, draw, fill=blue!50, label=north east:8] {};
\draw (8,0) node[circle, draw, fill=red!30, label=north east:9] {};
\draw (9,0) node[circle, draw, fill=blue!50, label=north east:10] {};

\end{tikzpicture}
    \caption{Gap number and basis elements}
    \label{fig:necklace}
\end{figure}


  It is evident from this example that the gap number $\iota(w)$ gives us the number of basis element in the corresponding stable summand in $H_1(\Z_{\K_n}(D^1,S^0))$.
  From proposition \ref{ali_prop}, we have the following quick lemma:
  \begin{lemma}
  Let $w=w_1\dots w_d$ be a Lyndon word with $d\mid n$ and $1<d\leq n$. Let $I$ be the sequence corresponding $w^{n/d}$. Then the stable summand $\Sigma^2\abs{\K_I}$ is equivalent to $\bigvee\limits_{i=1}^{\iota(w)\frac{n}{d}-1} S^2$.
  \end{lemma}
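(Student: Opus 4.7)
The plan is to identify the full subcomplex $\K_I$ explicitly, count its path components using the structure of Lyndon words, and then compute the double suspension.

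First, I would describe $\K_I$ concretely. Since $\K_n$ is the boundary of an $n$-gon, its edges are exactly the pairs $\{i, i+1 \bmod n\}$, so the full subcomplex $\K_I$ on the vertex set $I$ consists of the vertices in $I$ together with those edges $\{i, i+1 \bmod n\}$ whose endpoints are both $0$-positions of $w^{n/d}$. In particular, $\K_I$ is a $1$-dimensional subcomplex of the $n$-cycle.

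Next, I would invoke the standard fact that every binary Lyndon word $w$ of length $d > 1$ begins with $0$ and ends with $1$. The first is immediate from lexicographic minimality. For the second, if the last letter were $0$, comparing $w = w_1\cdots w_{d-1} 0$ with its rotation $0 w_1 \cdots w_{d-1}$ forces $w_2 = 0$, then $w_3 = 0$, and so on, contradicting primitivity. Consequently, in the cyclic word $w^{n/d}$ the junction between each pair of adjacent copies of $w$ has the pattern $\cdots 1 \mid 0 \cdots$, so no block of $0$'s bridges that boundary. It follows that the $0$-positions of $w^{n/d}$, viewed cyclically on $[n]$, partition into exactly $\iota(w)\cdot n/d$ maximal blocks of consecutive indices.

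Each such block is a contiguous arc in the $n$-cycle, so it contributes a connected path subgraph to $\K_I$; distinct blocks contribute vertex-disjoint components since they are separated by at least one $1$-position. Therefore $|\K_I|$ is homeomorphic to a disjoint union of $\iota(w)\cdot n/d$ paths, each of which is contractible. As a pointed space, this is homotopy equivalent to a wedge of $\iota(w)\cdot n/d - 1$ copies of $S^0$. Taking the double reduced suspension yields
\[
\Sigma^2 |\K_I| \;\simeq\; \bigvee_{i=1}^{\iota(w)\,n/d - 1} S^2,
\]
which is the desired conclusion.

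The only real subtlety is the cyclic boundary: one must rule out that a block of $0$'s at the end of one copy of $w$ in $w^{n/d}$ merges with a block at the start of the next copy. The Lyndon condition $d>1$ (guaranteeing $w$ ends in $1$) handles this cleanly, and the remainder of the argument is a direct computation.
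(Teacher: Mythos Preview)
Your proof is correct and follows essentially the same approach as the paper: identify the components of $\K_I$ with the maximal blocks of $0$'s in $w^{n/d}$, observe each is contractible, and suspend. The paper's proof is a terse two-sentence sketch of this same idea; you have added the one detail it glosses over, namely the Lyndon-word argument that $w$ ends in $1$, which guarantees no block of $0$'s bridges adjacent copies of $w$ in the cyclic word.
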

  \begin{proof}
  First note that $w^{n/d} = w\dots w$ has $\iota(w)\frac{n}{d}-1$ many blocks of $0$'s. Since $\K_n$ is an $n$-gon, any two consecutive vertices are path connected. Hence the conclusion follows.
  \end{proof}
   We first give a formula for calculating this gap number $\iota(w)$ and then prove our main results.
 
 Let $L(n,k)$ denote the number of $n$-length binary Lyndon words where $01$ appears exactly $k$ times. Then we have the following formula.\footnote{Thanks to online community member `joriki'  who has answered my question at~\cite{joriki} and helped me figure out this formula.}
 \begin{lemma}
 \[
L(n,k)=\frac2n\sum_{d\mid\gcd(n,k)}\mu(s)\binom{n/d}{2k/d}\;.
 \]
 \end{lemma}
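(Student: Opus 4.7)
The plan is to count the total number of binary cyclic sequences of length $n$ with exactly $k$ cyclic occurrences of $01$, decompose that count by primitive period, and then invert by M\"obius. The key preliminary observation is that for a binary Lyndon word $w$ of length $n>1$, the word begins with $0$ and ends with $1$, so $w_nw_1=10$; hence the linear gap number $\iota(w)$ agrees with the \emph{cyclic} count of $01$ in $w$. This lets me freely translate between $\iota$ and the cyclic notion that is natural for necklace counting.

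The first main step is to establish the identity $T(n,k)=2\binom{n}{2k}$, where $T(n,k)$ is the number of length-$n$ binary strings whose cyclic count of $01$ equals $k\geq 1$. I would argue by alternation: on a cyclic binary sequence, transitions of type $01$ and type $10$ must strictly alternate around the cycle, so the total number of transitions is $2k$, and these occur in some subset of the $n$ cyclic gaps between adjacent positions. Choosing the $2k$ gaps contributes $\binom{n}{2k}$, and an additional factor of $2$ records whether a prescribed transition is $01$ or $10$ (after which the alternation determines all the others).

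Next I would decompose $T(n,k)$ by primitive period. Every length-$n$ binary word is uniquely of the form $u^{n/d}$ for an aperiodic $u$ of length $d\mid n$, and its cyclic $01$-count equals $(n/d)$ times that of $u$. Setting $e=n/d$ forces $e\mid \gcd(n,k)$; using that each Lyndon word of length $n/e$ corresponds to a primitive necklace contributing exactly $n/e$ aperiodic linear rotations, I would obtain
\[
2\binom{n}{2k} \;=\; \sum_{e\mid \gcd(n,k)} \frac{n}{e}\, L(n/e,\,k/e).
\]
Applying the same identity at the scaled pairs $(n/m,k/m)$ for each $m\mid \gcd(n,k)$ expresses $2\binom{n/m}{2k/m}$ as a sum over those $e$ that are divisible by $m$, so M\"obius inversion over the divisor lattice of $\gcd(n,k)$ yields
\[
n\,L(n,k) \;=\; \sum_{d\mid \gcd(n,k)} \mu(d)\, 2\binom{n/d}{2k/d},
\]
which is the stated formula (correcting the evident typo $\mu(s)\mapsto \mu(d)$). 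The one step I expect to require genuine care is the combinatorial identity $T(n,k)=2\binom{n}{2k}$, including the verification that the $k=0$ and constant-word edge cases are handled consistently with the Lyndon/necklace correspondence; once this is in hand, the M\"obius inversion is routine.
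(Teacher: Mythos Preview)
Your proposal is correct and follows essentially the same approach as the paper's proof: establish the identity $2\binom{n}{2k}=\sum_{d\mid\gcd(n,k)}\frac{n}{d}\,L(n/d,k/d)$ by counting cyclic binary sequences with $k$ blocks of each color, then apply M\"obius inversion over the divisors of $\gcd(n,k)$. Your exposition is somewhat more careful about the transition-counting bijection and the agreement of linear and cyclic $01$-counts for Lyndon words, but the core argument is identical.
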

\begin{proof}
 An $n$-length binary Lyndon word is equivalent to a primitive necklace of length $n$ with 2-coloring (see  \cref{primitive_necklace}, \cref{aperiodic}). If the count of 01-substring in $w$ is $k$, there are $k$ blocks of $0$'s and $k$ blocks of 1's. Therefore for an $n$-beads necklace we have to choose $2k$ positions as the boundary of those blocks ($k$ positions for $01$ and $k$ positions for $10$) . So we have the following identity 
 $$
 2{\binom{n}{2k}} = \sum\limits_{d\mid gcd(n,k)} \frac{n}{d} \ L(n/d,k/d) 
 $$
 This identity follows from the fact that we can construct a 2-colored arbitrary necklace with $2k$ blocks by first choosing one of the $L(n/d,k/d)$ words of length $n/d$ with $k/d$ blocks, rotating it in $n/d$ ways and then repeating it $d$ times. We can rewrite the equation by setting $n= ux,\ k = vx$ where $u,v$ are coprimes.
 $$
 2 {\binom{ux}{2vx}} = \sum_{d\mid x} \ \frac {ux}d \ L(\frac{ux}{d},\frac{vx}{d}) = \sum_{d\mid x}\ ud\ L(ud,vd)
 $$
Apply M\"{o}bius inversion formula to get
 \begin{eqnarray*}
  ux\ L(ux,vx) &=& 2\sum_{d\mid x} \mu(d) {\binom{ux/d}{2vx/d} }\\
  \implies L(n,k)    &=& \frac2n\sum_{d\mid gcd(n,k)} {\binom{n/d}{2k/d} }
 \end{eqnarray*}
\end{proof}
 
\begin{example}
Consider the binary Lyndon words with length 6. We have $\mathfrak{L}_6= \{000001, 000011, 000101, 000111, 001011,001101,001111,010111,011111\}$. \\Clearly, we have $L(6,2)=4$. By the above formula we also get,
\begin{eqnarray*}
L(6,2)
&=&
\frac26\sum_{d\mid2}\mu(d)\binom{6/d}{4/d}
\\
&=&
\frac13\left(\binom64-\binom32\right)
\\
&=&
\frac13(15-3)
\\
&=&
4\;.
\end{eqnarray*}

\end{example}
\begin{example}[Lyndon Word and $\ZZ_n$ action on Stable splitting]\label{example4}
Let $k_6$ be the boundary of a $6$-gon. Pick $w = 001011\in\mathfrak{L}_6$. This corresponds to a sequence $I=(1,2,4)$. Under the action of $\ZZ_n$, this $I$ obviously has an orbit of size 6: $\{(1,2,4), (2,3,5), (3,4,6)$, $(4,5,1), (5,6,2), (6,1,3)\}$. Since $\iota(w)=2$, we can take one basis element $(1,4)$ for $\abs{\K_I}$ in the stable splitting of \ref{ali_prop}. Hence, we get $\bigoplus\limits_{i=0}^{6} \ZZ$ as a $\ZZ_n$-invariant summand of $H_1(\Z_{\K_6}(D^1,S^0))$. 
\end{example}

\textbf{Notation:} Let $\mathbf{A_d}$ denote the \textit{standard cycle matrix} in $GL(d,\mathbb{Z})$. So $\mathbf{A_d}$ is a $d\times d$ matrix with each entry 0 or 1.
 
 $$\mathbf{A_d} = \left(\begin{array}{ccccc}
0 &   & & \cdots & 1 \\
1 & 0 & & \cdots & 0\\
 & \ddots &\ddots & &\\
\vdots & & \ddots &\ddots &\\
0 & & & 1 & 0 
\end{array}\right)$$
 
\section{\texorpdfstring{$\mathbb{Z}[\zn]$}{Z[Zn]}-module Structure of \texorpdfstring{$H_1(\M)$}{Zk}}
 Recall the definition of induced representation. Let $H$ is a subgroup of $G$ and $M$ be a left $\ZZ[H]$-module. Then $\Ind_{H}^G M$ is defined as the tensor product $\ZZ[G]\bigotimes_{\ZZ[H]}M$, where $\ZZ[G]$ is considered as a right $\ZZ[H]$-module.  The following proposition found in \cite{brown_cohomology} (proposition 5.3, p67) will be very handy in our calculation. 
 \begin{proposition}\label{brown2}
 Let $G$ be a finite group and $N$ be a $\mathbb{Z}[G]$-module such that $N=\bigoplus_{i\in I} M_i$ as an abelian group. Suppose the action of $G$ transitively permutes the summands. Choose $M$ to be one of the summands and let $H\subset G$ be the subgroup fixing $M$. Then $M$ is a $\mathbb{Z}[H]$-module and $N\cong \Ind_{H}^G\ M$.
 \end{proposition}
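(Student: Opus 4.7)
The plan is to construct an explicit $\ZZ[G]$-module isomorphism $\Phi: \Ind_{H}^{G} M \to N$ directly from the data. First I would exploit the hypothesis: since $G$ transitively permutes the summands $\{M_i\}_{i\in I}$ with stabilizer $H = \mathrm{Stab}(M)$, the orbit--stabilizer theorem gives a $G$-equivariant bijection $G/H \xrightarrow{\sim} I$. I would fix coset representatives $\{g_i\}_{i \in I}$ with $g_1 = e$ corresponding to $M$, so that $M_i = g_i M$ as abelian subgroups of $N$.

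Next I would define $\Phi(g \otimes m) := g \cdot m$ and verify it descends to the $\ZZ[H]$-balanced tensor product: for $h \in H$ and $m \in M$, $\Phi(gh \otimes m) = (gh)m = g(hm) = \Phi(g \otimes hm)$, which uses exactly that $H$ preserves $M$ setwise (the precise fact that makes $M$ a $\ZZ[H]$-module via restriction of the $G$-action). Since $\Phi$ intertwines left multiplication by $G$ on the nose, it is automatically a homomorphism of $\ZZ[G]$-modules.

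For bijectivity, I would use that as an abelian group $\Ind_H^G M = \bigoplus_i (g_i \otimes M)$ via the chosen coset representatives, and that $\Phi$ carries the summand $g_i \otimes M$ into the summand $M_i = g_i M$ of $N$. The restriction $g_i \otimes m \mapsto g_i m$ is an isomorphism of abelian groups $M \to M_i$, with inverse given by left multiplication by $g_i^{-1}$ (which carries $M_i$ back to $M$ because $g_i^{-1}$ is the inverse permutation). Matching the two direct sum decompositions over $I$ summand-by-summand then shows that $\Phi$ is a bijection, and combining with the previous paragraph yields the desired $\ZZ[G]$-module isomorphism.

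There is no serious obstacle here: the main conceptual content is already exhausted by the identification $I \cong G/H$, after which the isomorphism is essentially forced by the universal property of induction. The only mild subtlety is being careful that ``$H$ fixes $M$'' is interpreted as $hM = M$ setwise (not merely that $H$ acts on $M$ up to some further choice), since this is what allows the restriction of the $G$-action to endow $M$ with a genuine $\ZZ[H]$-module structure compatible with the tensor product.
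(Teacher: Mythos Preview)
Your proof is correct and is essentially the standard argument. The paper does not actually supply its own proof of this proposition; it simply cites it as Proposition~5.3 on p.~67 of Brown's \emph{Cohomology of Groups}, and the argument there is the same construction you give: define $\Phi(g\otimes m)=g\cdot m$, check it is $\mathbb{Z}[H]$-balanced and $G$-equivariant, and verify bijectivity by matching the coset decomposition $\Ind_H^G M=\bigoplus_i g_i\otimes M$ with the given decomposition $N=\bigoplus_i M_i$.
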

 
 Let $\mathfrak{L}= \left(\bigcup\limits_{\substack{d|n\\ 1<d< n}}\mathfrak{L}_d\right)\bigcup \{w\in\mathfrak{L}_n|\iota(w)>1\}$. Note that this $\mathfrak{L}$ gives a complete set of representatives of the orbits of $\zn$-action on $\{I\mid I\notin\K_n \mathrm{\ and \ } \abs{K_I} \mathrm{\ nontrivial\ }\}$.
 
 Motivated by the example in \ref{example4} we first proof the following lemma.
\begin{lemma}\label{brown1}
Let $w\in \mathfrak{L}$ with length $\abs{w}=d$. Then we have a $\zn$-invariant submodule $M_w$ which is a direct summand of $H_1(\M)$ and has $\ZZ$-rank equal to $d(\iota(w)n/d-1)=\iota(w)n-d$. As a $\ZZ$-module we have,
\begin{equation}
  M_w\cong \bigoplus_{i=1}^{d} \ZZ^{\iota(w)n/d-1}  
\end{equation}
\end{lemma}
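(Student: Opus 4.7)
The plan is to use the $\zn$-equivariant stable splitting of Proposition~\ref{ali_prop} to decompose $H_1(\M)$ into orbit summands, then isolate the block corresponding to $w$. Concretely, applying $\tilde H_2$ to the equivariant homotopy equivalence in Proposition~\ref{ali_prop} produces a $\zn$-equivariant isomorphism
\[
H_1(\M) \;\cong\; \bigoplus_{I\notin \K_n} \tilde H_0(\abs{\K_I}),
\]
where $\sigma \in \zn$ sends the summand at $I$ isomorphically onto the summand at $\sigma(I)$. This expression splits as a $\zn$-module into one direct summand for each $\zn$-orbit of sequences $I$.

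Next I would identify the orbit through $I_w$. Because $w$ is a Lyndon word of length $d$ it is primitive, so the repeated word $w^{n/d}$ has cyclic period exactly $d$ when read around $[n]$. Consequently the stabilizer of $I_w$ in $\zn=\langle \sigma\rangle$ is $\langle \sigma^d \rangle$, and the orbit $\{I_w,\sigma I_w,\dots,\sigma^{d-1}I_w\}$ has size exactly $d$. Setting
\[
M_w \;:=\; \bigoplus_{k=0}^{d-1} \tilde H_0\bigl(\abs{\K_{\sigma^k I_w}}\bigr)
\]
then gives a $\zn$-invariant direct summand of $H_1(\M)$ by the orbit decomposition of the previous paragraph.

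Finally I would tabulate the ranks. For each $k$, the sequence $\sigma^k I_w$ is a rotation of $I_w$, so $\K_{\sigma^k I_w}=\sigma^k(\K_{I_w})$ is simplicially isomorphic to $\K_{I_w}$ and has the same reduced homology. By the lemma immediately preceding this one, $\tilde H_0(\abs{\K_{I_w}})\cong \ZZ^{\iota(w) n/d - 1}$. Summing over the $d$ rotations yields $M_w\cong \bigoplus_{i=1}^{d}\ZZ^{\iota(w) n/d - 1}$ of total $\ZZ$-rank $d(\iota(w) n/d - 1)=\iota(w)n - d$, as required. The only delicate step is the primitivity argument that pins down the orbit size, and that follows immediately from the Lyndon-word definition, so I do not anticipate a real obstacle.
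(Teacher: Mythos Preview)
Your proposal is correct and follows essentially the same approach as the paper: both arguments pass to the equivariant splitting, identify the $\zn$-orbit of $I_w$ as having size $d$ via primitivity of the Lyndon word, and invoke the preceding lemma to read off $\iota(w)n/d-1$ free generators per orbit element. The paper's version is simply terser, omitting the explicit stabilizer computation you spell out.
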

\begin{proof} Create the corresponding sequence $I\subset [n]$ by repeating the Lyndon word $n/d$ times and taking a vertex at $0$'s position. Then $\mathrm{orbit}(I)$ has $d$ distinct elements under the action of $\zn$. Each sequence in $\mathrm{orbit}(I)$ correspond $\iota(w)n/d-1$ copies of $\ZZ$ in $H_1$. Hence the conclusion follows.
\end{proof}

  Now we describe the $\ZZ[\zn]$-module structure of $H_1(\M$).

\begin{theorem}\label{module_structure}
 Let $\K_n$ be the boundary of an $n$-gon and $\zn=\langle\sigma \rangle$ be the cyclic group of order $n$. Then as a $\mathbb{Z}[\zn]$-module the homology group $H_1(\M)$ is isomorphic to a direct sum of induced representations $\bigoplus\limits_{w\in \mathfrak{L}}\Ind_{\ZZ_{n/d}}^{\zn}\ N_w$, where $N_w$ is a direct summand of $H_1(\M)$ as an abelian group, $d>1$ is a divisor of $n$ and $\ZZ_{n/d} = \langle \sigma^{d}\rangle\subset \z_n$. 
 
 Moreover, if $1<d<n$, $N_w$ is isomorphic to $\z^{\iota(w){n/d}-1}$ as a $\z$-module and the action of $\ZZ_{n/d} = \langle \sigma^{d}\rangle$ on $N_w$ has matrix representation\ (with respect to standard basis) as
 
\[
 \left(\begin{array}{c|c|c}
           \begin{array}{cccc}
    \mathbf{A}_{n/d}& & &\mathbf{0}\\
     &\mathbf{A}_{n/d}& &          \\
     &&\ddots&\\
     \mathbf{0}& & & \mathbf{A}_{n/d} 
    \end{array}    & \mathbf{0} & \mathbf{-1} \\
    \hline
    \mathbf 0 & \begin{array}{c}
                  \mathrm{0}\\
                  \mathbf{I}_{n/d-2} 
                   \end{array}  &   \begin{array}{c}-1\\ \mathbf{-1}  \end{array}  
 \end{array}\right)
 \]
 which is a  square matrix of dimension $\iota(w)n/d-1$, with $\iota(w)-1$ copies of standard cycle matrix $\mathbf{A}_{n/d}$ on the upper left block diagonal terms. If $d=n$, then $N_w$ is isomorphic to $ (\ZZ[\zn])^{\iota(w)-1}$.
 
\end{theorem}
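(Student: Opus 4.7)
The strategy combines Al-Raisi's $\z_n$-equivariant stable splitting (\cref{ali_prop}) with the induction formula for group actions that permute direct summands (\cref{brown2}), after which a careful choice of basis for $\tilde{H}_0(|\K_I|)$ reads off the matrix of $\sigma^d$ directly.

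First, taking $H_2$ of the homotopy equivalence in \cref{ali_prop} gives a $\z[\z_n]$-module isomorphism
$$H_1(\M) \;\cong\; \bigoplus_{I\notin\K_n} H_2\bigl(\Sigma^2 |\K_I|\bigr) \;\cong\; \bigoplus_{I\notin\K_n} \tilde H_0(|\K_I|),$$
on which $\sigma\in\z_n$ permutes summands through the simplicial maps $\sigma\colon\K_I\to\K_{\sigma(I)}$. The bijection between $\z_n$-orbits of admissible $I$ and Lyndon words used in \cref{brown1} shows that for each $w\in\mathfrak{L}$ of length $d$, the sequence $I=I(w^{n/d})$ has stabilizer exactly $\langle\sigma^d\rangle\cong\z_{n/d}$. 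Applying \cref{brown2} to this orbit gives
$$\bigoplus_{I'\in\mathrm{orbit}(I)} \tilde H_0(|\K_{I'}|) \;\cong\; \mathrm{Ind}_{\z_{n/d}}^{\z_n} N_w, \qquad N_w := \tilde H_0(|\K_I|),$$
and summing over $w\in\mathfrak{L}$ establishes the first assertion. In the $d=n$ case the stabilizer is trivial, so $\mathrm{Ind}_{\{1\}}^{\z_n} N_w \cong \z[\z_n]\otimes_\z N_w \cong (\z[\z_n])^{\iota(w)-1}$, matching the final claim once we establish that $N_w\cong \z^{\iota(w)-1}$.

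For the matrix assertion in the range $1<d<n$, note that an edge of $\K_n$ survives in $\K_I$ iff both of its endpoints lie in the same maximal run of $0$'s in $w^{n/d}$; hence $|\K_I|$ is a disjoint union of $\iota(w)\cdot(n/d)$ contractible arcs and $\tilde H_0(|\K_I|)\cong \z^{\iota(w)n/d-1}$. Label the components $c_{i,j}$ with $i\in\{1,\dots,\iota(w)\}$ enumerating the $\sigma^d$-cycles and $j\in\{1,\dots,n/d\}$ the position within each cycle, so that $\sigma^d(c_{i,j})=c_{i,\,(j \bmod n/d)+1}$. Model $\tilde H_0$ as the quotient of the free abelian group on the $c_{i,j}$ by $\sum_{i,j} c_{i,j}=0$, eliminate $c_{1,n/d}$, and take the ordered basis
$$\bigl(c_{2,1},\dots,c_{2,n/d},\,c_{3,1},\dots,c_{\iota(w),n/d},\,c_{1,1},\dots,c_{1,n/d-1}\bigr).$$
The first $(\iota(w)-1)(n/d)$ basis vectors are permuted cyclically by $\sigma^d$ within each of the cycles $i\ge 2$, producing the $\iota(w)-1$ diagonal copies of $\mathbf{A}_{n/d}$. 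For $j=1,\dots,n/d-2$, the image $\sigma^d(c_{1,j})=c_{1,j+1}$ is again a basis element, contributing the block $\bigl(\begin{smallmatrix}0\\ \mathbf{I}_{n/d-2}\end{smallmatrix}\bigr)$. Finally, $\sigma^d(c_{1,n/d-1})=c_{1,n/d}=-\sum_{(i,j)\neq(1,n/d)} c_{i,j}$ contributes $-1$ in every entry of the last column, producing simultaneously the $\mathbf{-1}$ in the top-right block and the column $\bigl(\begin{smallmatrix}-1\\ \mathbf{-1}\end{smallmatrix}\bigr)$ in the bottom right.

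The only delicate point is bookkeeping: the quotient relation $\sum c_{i,j}=0$ is invoked exactly once, in the final column, and one must check that every other column expresses $\sigma^d$-images directly in the chosen basis. This is precisely what forces $\iota(w)-1$ (rather than $\iota(w)$) copies of $\mathbf{A}_{n/d}$ on the block diagonal and produces the single mixed column that spans both block rows.
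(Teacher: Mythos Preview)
Your proof is correct and follows the same overall strategy as the paper: reduce to the summands $\tilde H_0(|\K_I|)$ via the equivariant stable splitting (\cref{ali_prop}), apply \cref{brown2} orbit by orbit, and then read off the matrix from an explicit basis of $N_w$.

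The one point on which you differ from the paper is the choice of basis. The paper works directly in $\tilde H_0(|\K_I|)=\ker\epsilon$ using telescoping differences $e_r=[v_a]-[v_b]$ along the cyclic ordering of components, whereas you work in the quotient $\z\{c_{i,j}\}/(\sum c_{i,j})=\operatorname{coker}\Delta$. These are not literally the same $\z[\z_{n/d}]$-module by definition, so the sentence ``model $\tilde H_0$ as the quotient \ldots'' deserves one more word of justification. The cleanest fix is to observe that the map sending your $k$-th basis vector to the paper's $k$-th telescoping difference is a $\z$-isomorphism that intertwines the $\sigma^d$-actions (both sides have $\iota(w)-1$ full $\sigma^d$-cycles of length $n/d$ among the first $(\iota(w)-1)n/d$ basis vectors, and the last $n/d-1$ vectors behave identically, with the final one mapping to minus the sum of all); hence $\ker\epsilon\cong\operatorname{coker}\Delta$ as $\z[\z_{n/d}]$-modules and your matrix computation transports verbatim. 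With that remark added, the argument is complete and arguably more transparent than the paper's, since your column-by-column verification of the block structure is explicit while the paper simply asserts ``with respect to this basis, we get the desired matrix representation.''
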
 
 
 \begin{proof}
  The first part of the theorem follows from lemma \ref{brown1} and proposition \ref{brown2}. Note that for each $w\in \mathfrak{L}$, there exists a $\zn$-invariant summand $M_w$ which has rank $\iota(w)n-d$ as a $\ZZ$-module. Since $w$ is a Lyndon word of length $d$, we can decompose $M_w$ as $d$ copies of $\ZZ^{\iota(w)n/d-1}$ which are fixed by $\ZZ_{n/d}=\langle \sigma^{d}\rangle$. Then by proposition \ref{brown2}, $M_w= \Ind_{\z_{n/d}}^{\zn} N_w$ where $N_w$ is isomorphic to $\iota(w)n/d-1$ copies of $\ZZ$. 
  
  Now we will describe the matrix representation of $\z_{n/d}$-action on $N_w$. First assume that $1<d<n$. Since $\iota(w)$ represents the number of blocks of 0's. Create a $n$-length word by repeating $n/d$ times. So there are $\iota(w)n/d$ many blocks of $0$'s and this $n$-length word gives us a $I\notin \K_I$.  Pick one vertex from each of these blocks: $(v_1^1,\cdots,v_{\iota(w)}^1, v_1^2,\cdots,v_{\iota(w)}^2,v_1^{n/d},\cdots,v_{\iota(w)}^{n/d})$. Choose a basis for $N_w$ as 
 \begin{flalign*}
 e_1 = (v_1^1,v_2^1),\   e_{n/d+1} &= (v_2^1,v_3^1),\ \cdots, \  e_{(\iota(w)-1)n/d+1} = (v_{\iota(w)}^1,v_1^2)\\
 e_2 = (v_1^2,v_2^2),\  e_{n/d+2} &= (v_2^2,v_3^2),\ \cdots, \  e_{(\iota(w)-1)n/d+2} = (v_{\iota(w)}^2,v_1^3)\\
 \vdots&\\
 e_{n/d} = (v_1^{n/d}, v_2^{n/d}),\ & e_{2n/d} = (v_2^{n/d},v_3^{n/d}),\ \cdots, \  e_{\iota(w)n/d-1} = (v_{\iota(w)-1}^{n/d},v_{\iota(w)}^{n/d})\\
 \end{flalign*} 
 With respect to this basis, we get the desired matrix representation.\\
 Now assume $d=n$. By proposition \ref{brown2}, we have $N_w \cong \Ind_{\{id\}}^{\zn} \ZZ^{\iota(w)-1}= \ZZ[\zn]\bigotimes\limits_{\ZZ} \ZZ^{\iota(w)-1} = \bigoplus\limits_{i=1}^{\iota(w)-1} \ZZ[\zn]$
 \end{proof}

\begin{corollary}
 Let $p\geq5$ be a prime and $\K_p$ be the boundary of a $p$-gon. Then as a $\mathbb{Z}[C_p]$-module the homology group $H_1(Z_{\K_p}(D^1,S^0))$ is isomorphic to a direct sum of $\frac{2(1+(p-4)2^{p-3})}{p}$ copies of $\mathbb{Z}[\z_p]$. 
\end{corollary}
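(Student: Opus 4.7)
The plan is to specialize Theorem~\ref{module_structure} to $n=p$ prime and then count the free summands by a rank argument. Since $p$ has no divisors $d$ with $1<d<p$, the indexing set $\mathfrak{L} = \bigl(\bigcup_{1<d<n}\mathfrak{L}_d\bigr)\cup\{w\in\mathfrak{L}_n : \iota(w)>1\}$ collapses to $\{w\in\mathfrak{L}_p : \iota(w)>1\}$. Every such $w$ falls into the $d=n$ case of the theorem, where the subgroup $\ZZ_{n/d}$ is trivial and the summand attached to $w$ is $\bigl(\ZZ[\zn]\bigr)^{\iota(w)-1}$. This immediately gives
\[
H_1(\Z_{\K_p}(D^1,S^0)) \;\cong\; \bigoplus_{\substack{w\in\mathfrak{L}_p\\ \iota(w)>1}} \bigl(\ZZ[\zn]\bigr)^{\iota(w)-1}
\]
as $\ZZ[C_p]$-modules, so the only thing left is to determine the total number of $\ZZ[\zn]$ factors.

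Rather than computing $\sum_w(\iota(w)-1)$ directly via Witt's formula together with the identity for $L(p,k)$, I would read the count off from the $\ZZ$-rank. Each copy of $\ZZ[\zn]$ is free abelian of rank $p$, so the number of copies equals $\mathrm{rank}_{\ZZ}\,H_1 / p$. By the proposition in Chapter~\ref{chap:chapter-2}, $\Z_{\K_p}(D^1,S^0)$ is a closed, compact, orientable surface of genus $g=1+(p-4)2^{p-3}$, hence $\mathrm{rank}_{\ZZ}\,H_1 = 2g$. Dividing yields exactly $2(1+(p-4)2^{p-3})/p$ copies of $\ZZ[\zn]$, which is the claim.

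The only point worth checking is that the Lyndon words of length $p$ with $\iota(w)=1$ may genuinely be discarded from the indexing set. Algebraically this is automatic since $(\ZZ[\zn])^{\iota(w)-1}=0$ when $\iota(w)=1$, and geometrically it matches the fact that such a $w$ corresponds to a sequence $I$ of consecutive vertices on the $p$-gon, making $\K_I$ a path and hence contractible, so $\Sigma^2|\K_I|$ contributes nothing to $H_1$. No genuinely hard computation is needed; the crux is simply that primality of $p$ forces every orbit of the $\zn$-action on Lyndon words of length $p$ to have full length $p$, which is precisely what makes every $M_w$ a free $\ZZ[\zn]$-module and lets the rank count finish the job.
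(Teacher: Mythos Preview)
Your proposal is correct and is precisely the intended derivation: the paper states the result as an immediate corollary of Theorem~\ref{module_structure} with no separate proof, and your argument---specializing to $n=p$ prime so that only the $d=n$ case survives, then reading off the number of $\ZZ[\zn]$-summands from the $\ZZ$-rank $2g$ of $H_1$---is exactly how the corollary follows.
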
 

\section{Examples}
\subsection{The Homology of $\Z_{\K_5}(D^1,S^0)$ as a $\mathbb{Z}[\z_5]$-module}
{\newcommand{\X}{\Z_{K_5}(D^1,S^0)}

$\X$ is a closed surface with genus $g = 1 + (5-4)2^{5-3}=5$, therefore we have, 
$$
H_*(\X) = \begin{cases}
    \mathbb{Z} &\mbox{if } * = 0,2 \\
    \bigoplus\limits_{i=1}^{10} \ \mathbb{Z} & \mbox{if } * = 1 
\end{cases}
$$
Now we will describe the $\z_5$-action on $H_1(\X)$. From \cref{ali_prop} we have 

$$
H_1(\X)\cong \bigoplus\limits_{\substack{I\notin \K_5 \\ 1<\abs{I}< 5}} H_1(\Sigma\abs{\K_I})
$$
Now note that the set of 5-length Lyndon words on $\{0,1\}$ contains five words, $\mathfrak{L}_5 =\{00001, 00011, 00101, 00111, 01011, 01111\}$. Among these only 00101, 01011 has gap number greater that 1. So, $\mathfrak{L}=\{w\in \mathfrak{L}_5\mid \iota(w)>1\} = \{00101,01011\}$. We get the following summands in $H_1$ as $\z[\z_5]$-module
\newpage
\begin{table}[h]
    \centering
    \caption{$\z[\z_5]$-module structure of $H_1(\Z_{\K_5}(D^1,S^0))$}
    \label{tab:table1}
\end{table}
\begin{center}
\begin{tabular}{ |c|c|c|c| } 
 \hline
      Lyndon word, $w$ & Orbits & \begin{tabular}{c}
      $\mathbb{Z}[\z_5]$-invariant  \\
      summand in $H_1$
 \end{tabular}\\
 
\hhline{|=|=|=|=|} 
 $w=00101$ &\begin{tabular}{c}
      (1 2 4)\\(2 3 5)\\(3 4 1)\\(4 5 2)\\(5 1 3)
 \end{tabular} &  $\bigoplus\limits_{i=1}^{5}\mathbb{Z}\cong \z[\z_5]$  \\

 \hline
 $w=01011$ &
\begin{tabular}{c}
  (1 3 )\\(2 4)\\(3 5)\\(4 1)\\(5 2)
\end{tabular}
 & $\bigoplus\limits_{i=1}^{5}\mathbb{Z}\cong \z[\z_5]$    \\ 
 
 \hline
\end{tabular}
\end{center}

Hence we have the following $\z[\z_5]$-module isomorphism.
$$
H_1(\X)\cong \bigoplus\limits_{i=1}^{2}\z[\z_5]
$$

}
\subsection{The Homology of $\Z_{\K_6}(D^1,S^0)$ as a $\mathbb{Z}[\z_6]$-module}
{
\newcommand{\X}{Z_{\K_6}(D^1,S^0)}
\newcommand{\zsix}{\mathbb{Z}[\z_6]}
We will describe the case for $n=6$. Let $\K_6$ denote the boundary of a 6-gon. Then we know that $\Z_{K_6}(D^1,S^0)$ is a closed compact and oriented surface of genus $g=1+(6-4)2^{6-3}=17$. Therefore we have,
\[
H_*(Z_{\K_6}(D^1,S^0))=
\begin{cases} 
    \mathbb{Z} &\mbox{if } * = 0,2 \\
    \bigoplus\limits_{i=1}^{34} \ \mathbb{Z} & \mbox{if } * = 1 
\end{cases} 
\]
Then by \cref{ali_prop} $\zn$ acts on $\Z_{\K_6}(D^1,S^0)$ by rotating the coordinates cyclically which induces a $\zn$-equivariant map on $H_*(\X)$. Note that $\zn$ acts trivially on $H_{0}$ and $H_2$.  The action of $\zn$ on $H_1$ is the interesting part. By proposition \ref{ali_prop}, we have
\begin{align*}
H_1(\X) & \cong H_2(\Sigma \X) \\
& \cong H_{2}(\bigvee_{\substack{I\notin \K\\ 1<\abs{I}< n}} \Sigma^2 \abs{\K_I})\\
& \cong \bigoplus\limits_{\substack{I\notin \K\\ 1<\abs{I}< n}} H_2(\Sigma^2\abs{\K_I})\\
& \cong \bigoplus\limits_{\substack{I\notin \K\\ 1<\abs{I}< n}} H_1(\Sigma\abs{\K_I})
\end{align*}

We have the following correspondece between $\{I|I\notin \K, 1<\abs{I}< 6\}$ and $\left(\bigcup\limits_{d|6,d>1}\mathfrak{L}_d\right)\bigcup \{w\in\mathfrak{L}_6|\iota(w)>1\}$. Recall that $\iota(w)$ denotes the number of 01 in a word $w$, from which we can get the rank of the corresponding summand in $H_1$ as an abelian group.
\newpage
\begin{table}[h]
    \centering
    \caption{$\z[\z_6]$-module structure of $H_1(\Z_{\K_6}(D^1,S^0))$}
    \label{tab:table2}
\end{table}
\begin{center}
\begin{tabular}{ |c|c|c|c| } 
 \hline
 \begin{tabular}{c}
      Word length, $d$ \\ Lyndon word, $w$\\ number of `01', $\iota(w)$
       
 \end{tabular} & $I\notin \K$ & Orbits & \begin{tabular}{c}
      $\mathbb{Z}[\zn]$-invariant  \\
      summand in $H_1$
 \end{tabular}\\
\hhline{|=|=|=|=|}
 $d=2,\ w=01,\ \iota(w)=1$ & (1 3 5) & \begin{tabular}{c}
      (1 3 5)\\(2 4 6)
 \end{tabular} &  $\bigoplus\limits_{i=1}^{2}\mathbb{Z}^2$  \\ 
 
 \hline
 $d=3,\ w=001,\ \iota(w)=1$ & (1 2 4 5) & \begin{tabular}{c}
      (1 2 4 5)\\(2 3 5 6)\\(3 4 6 1)
 \end{tabular} &    $\bigoplus\limits_{i=1}^{3}\mathbb{Z}$\\ 
 
 \hline
 $d=3,\ w=011,\ \iota(w)=1$ & (1 4) & \begin{tabular}{c}
      (1 4)\\(2 5)\\(3 6)
 \end{tabular} & $\bigoplus\limits_{i=1}^{3}\mathbb{Z}$    \\ 
 
 \hline
 $d=6,\ w=010111,\ \iota(w)=2$ & (1 3) &\begin{tabular}{c}
      (1 3)\\(2 4)\\(3 5)\\(4 6)\\(5 1)\\(6 2)
      
 \end{tabular}  &  $\bigoplus\limits_{i=1}^{6}\mathbb{Z}$  \\ 

\hline
\end{tabular}
\end{center}

\begin{center}
\begin{tabular}{ |c|c|c|c| } 
 \hline
 \begin{tabular}{c}
      Word length, $d$ \\ Lyndon word, $w$\\ number of `01', $\iota(w)$
       
 \end{tabular} & $I\notin \K$ & Orbits & \begin{tabular}{c}
      $\mathbb{Z}[\zn]$-invariant  \\
      summand in $H_1$
 \end{tabular}\\
 
\hhline{|=|=|=|=|} 
 $d=6,\ w=001101,\ \iota(w)=2$ & (1 2 5) & 
 \begin{tabular}{c}
 (1 2 5)\\(2 3 6)\\(3 4 1)\\(4 5 2)\\(5 6 3)\\(6 1 4)
 \end{tabular}
 & $\bigoplus\limits_{i=1}^{6}\mathbb{Z}$ \\
 \hline
 $d=6,\ w=001011,\ \iota(w)=2$ & (1 2 4) & \begin{tabular}{c}
      (1 2 4)\\(2 3 5)\\(3 4 6)\\(4 5 1)\\(5 6 2)\\(6 1 3)
 \end{tabular} &  $\bigoplus\limits_{i=1}^{6}\mathbb{Z}$  \\

 \hline
 $d=6,\ w=000101,\ \iota(w)=2$ & (1 2 3 5) & 
\begin{tabular}{c}
  (1 2 3 5)\\(2 3 4 6)\\(3 4 5 1)\\(4 5 6 2)\\(5 6 1 3)\\(6 1 2 4)
\end{tabular}
 & $\bigoplus\limits_{i=1}^{6}\mathbb{Z}$    \\ 
 
 \hline
\end{tabular}
\end{center}

Note that in the fourth column, we have listed the $\mathbb{Z}[\zn]$-invariant subspaces in $H_1$. The $\mathbb{Z}$-rank of one such summand is equal to $\iota(w) n -d$, where $w$ is the corresponding Lyndon word. For each of the summand we get the following $\mathbb{Z}[\zn]$-module structure.

\begin{center}
\begin{tabular}{ |c|c|c|c| } 
 \hline
 \begin{tabular}{c}
 Lyndon word, $w$\\
 \end{tabular} &  \begin{tabular}{c}
 $\mathbb{Z}[\zn]$-module\\ structure
 \end{tabular} & \begin{tabular}{c}
      Matrix representation\\
      $\z_{n/d}$ action
 \end{tabular} \\
 
\hhline{|=|=|=|=|} 
 $w=01$ &  $\bigoplus\limits_{i=1}^{2}\mathbb{Z}^2 \cong \Ind_{\z_3}^{\z_6}\  \mathbb{Z}^2$& $\begin{pmatrix}
 0&-1\\1&-1
 \end{pmatrix}$\\ 
 
 \hline
 $w=001$ &    $\bigoplus\limits_{i=1}^{3}\ \mathbb{Z}\cong \Ind_{\z_2}^{\z_6}\ \mathbb{Z}$& sign rep.\\ 
 
 \hline
 $w=011$ &  $\bigoplus\limits_{i=1}^{3}\mathbb{Z}\ \cong \Ind_{\z_2}^{\z_6}\ \mathbb{Z}$ & ''   \\ 
 
 \hline
 $w=010111$ & $\bigoplus\limits_{i=1}^{6}\mathbb{Z} \cong \zsix$& regular rep  \\ 

\hline
 $w=001101$ & $\bigoplus\limits_{i=1}^{6}\mathbb{Z}\cong \zsix$& '' \\

 \hline
 $w=001011$ &  $\bigoplus\limits_{i=1}^{6}\mathbb{Z}  \cong \zsix$&''\\

 \hline
 $w=000101$ &  $\bigoplus\limits_{i=1}^{6}\mathbb{Z}\cong \zsix$  &''  \\ 
 
 \hline
\end{tabular}
\end{center}
Therefore we get the following $\zsix$-module isomorphism
$$
H_1(\X)\cong (\Ind_{\z_3}^{\z_6}\  \mathbb{Z}^2)\bigoplus (\oplus_{i=1}^{2}\Ind_{\z_2}^{\z_6}\ \mathbb{Z_{\mathrm{sign}}})\bigoplus 
(\oplus_{i=1}^{4} \zsix)
$$
}

\subsection{Calculations with $n=7, 8,9,10$}
\subsubsection*{$n=7:$}
\begin{align*}
    H_1(\Z_{\K_7}(D^1,S^0)) &\ \cong \bigoplus_{i=1}^{14}\ZZ[\ZZ_7]
\end{align*}
\subsubsection*{$n=8:$}
\begin{align*}
H_1(\Z_{\K_8}(D^1,S^0)) \cong 
\bigoplus_{i=1}^{3} \Ind_{\ZZ_2}^{\ZZ_8}\ \ZZ_{\mathrm{sign}}
\
\bigoplus_{}^{} \Ind_{\ZZ_4}^{\ZZ_8}\ \ZZ^3
\
\bigoplus_{i=1}^{30} \ZZ[{\ZZ_8}]\\
\end{align*}

\subsubsection*{$n=9:$}
\begin{align*}
H_1(\Z_{\K_{9}}(D^1,S^0)) \cong 
\bigoplus_{i=1}^{2} \Ind^{\ZZ_{9}}_{\ZZ_3}\ \z^2
\
\bigoplus_{i=1}^{70} \z[\z_9]\\
\end{align*}

\subsubsection*{$n=10:$}
\begin{align*}
H_1(\Z_{\K_{10}}(D^1,S^0)) \cong 
\bigoplus_{i=1}^{4} \Ind^{\ZZ_{10}}_{\ZZ_2}\ \ZZ_{\mathrm{sign}}
\
\bigoplus_{i=1}^{2} \Ind^{\ZZ_{10}}_{\ZZ_2}\ \ZZ^3
\
\bigoplus_{i=1}^{1} \Ind^{\ZZ_{10}}_{\ZZ_5}\ \ZZ^4
\
\bigoplus_{i=1}^{148} \ZZ[{\ZZ_{10}}]\\
\end{align*}

   

}

\chapter{Homotopy Orbit Space of \texorpdfstring{$\M$}{ZKn}}
\label{chap:chapter-7}
{
\newcommand{\Zn}{{\mathbb{Z}_n}}
\newcommand{\triv}{\operatorname{triv}}
\newcommand{\coker}{\operatorname{coker}}

\section{Introduction}
Recall from the previous chapters (\cref{chap:chapter-2,chap:chapter-5}), $\M$ admits a $\z_n$-action which induces a action on the homology $H_*(\M)$. In this chapter, we consider the homotopy orbit space $EG\times_{G}X$ where $G=\z_n, X=\M$ and $EG$ is the universal bundle of the classifying space of $G$. We calculate the homological spectral sequence of the fibration $X\to EG\times_G X \to BG$. We use the $\mathbb{Z}[\z_n]$-module structure of $H_*(\M)$ to show that the spectral sequence collapses at the $E^2$-page. 

\section{Homotopy Orbit Space}
Let $G$ be a finite dicrete group and $X$ be a topological space on which $G$ acts from the left. We call such space a left $G$-space. Recall that associated to $G$ there exists a universal bundle $G\hookrightarrow EG\to BG$, where $EG$ is a contractible space and $BG$ is the classifying space. Here $G$ acts on $EG$ on the right. Recall that $\pi_1(BG)\cong G$.
\begin{definition}[Borel Construction] Given a principal $G$-bundle $p:E\to B$ and a left $G$-space $F$, define the \textit{Borel construction} 
$$
E\times_{G}F
$$
to be the quotient space $E\times F/\sim$ where $(xg,f)\sim (x,gf)$.
\end{definition}
Let $[e,f]\in E\times_G F$ denote the equivalence class of $(e,f)$. Now define a map $q: E\times_G F\to B, [e,f]\mapsto p(e)$. One can show that the following diagram commutes. 

\begin{center}
    \begin{tikzcd}
 & G \arrow{d} & G\dar  \\
F\rar & E\times F \rar \dar & E  \dar {p}\\
F\rar & E\times_G F \arrow{r}{q}
& B 
\end{tikzcd}
\end{center}
We can conclude that $q:E\times_G F \to B$ is a fiber bundle. We say that $q:E\times_GF\to B$ is the fiber bundle over $B$ with fiber $F$ associated to the principal bundle $p:E\to B$ via the action of $G$ on $F$ (see~\cite[chapter~4]{davkir} ).

\begin{definition}[Homotopy Orbit Space]
Let $EG\to BG$ be the universal bundle of a group $G$ and $X$ be a left $G$-space. We define the homotopy orbit space of $X$ to be the Borel construction $EG\times_GX$. We write $X_{hG}:= EG\times_GX$.
\end{definition}
From the discussion above, it is clear that $X_{hG}\to BG, [e,x]\mapsto [e]$ is a fiber bundle over $BG$ with fiber $X$.  

\section{Homotopy Orbit Space of \texorpdfstring{$\M$}{ZKn}}
In this section, we set $X=\M$, the polyhedral product space corresponding to pair $(D^1,S^0)$,  the boundary of an $n$-gon $\K_n$ and $G=\z_n$,  the cyclic group of order $n$. In previous chapter, we have seen that $X$ is a closed surface and G has an action on $X$. This action gives us an branched covering where the quotient space $X/G$ is again a closed surface. 

Let $p:EG\to BG$ be the universal bundle of the cyclic group $G$. We can construct the homotopy orbit space of $X$ by taking the Borel construction $EG\times_G X$, where $G=\z_n$ and $X=\M$. Although the action of $G$ on $X$ is not free (it has fixed points, see chapter \ref{chap:chapter-2}), $G$ acts freely on $EG\times X$. So we have a fiber bundle 

\begin{center}
    \begin{tikzcd}
 X \rar & EG\times_GX  \dar\\
 & BG 
\end{tikzcd}
\end{center}
where $X=\M$ and $G=\z_n$. Recall that $X$ is a closed, compact and orientated surface with genus $1+(n-4)2^{n-3}$ and in \cref{chap:chapter-5} we have described the $\z[\pi_1(BG)]= \z[\z_n]$-module structure of $H_*(X)$.  Therefore we can use the homology of $X$ and $BG$ to get information about the homology of the homotopy orbit space $EG\times_G X$.
We describe some well known theorems which we used heavily in our calculation.

\begin{theorem}[\textbf{Leray-Serre spectral sequence}]
Let $R$ be an abelian group. Suppose $F\hookrightarrow E \to B$ is a fibration and $F$ connected. 
Then there is a first quadrant spectral sequence, $\{E^r_{*,*},d^r\}$, converging to $H_*(E;R)$, with 
$$
E^2_{p,q}\cong H_p(B; \mathcal{H}_q(F;R))
$$
the homology of the space $B$ with local coefficients in the homology of the fiber fiber of $p$.
\end{theorem}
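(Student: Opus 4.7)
The Leray–Serre spectral sequence is a classical theorem, so the plan is to follow the standard skeletal-filtration approach rather than invent anything new. First I would assume, without loss of generality, that $B$ is a CW complex (any space of the homotopy type of a CW complex suffices, and one can always replace $B$ by a CW approximation and pull back the fibration). Pulling back the skeletal filtration $\varnothing \subset B^{(0)} \subset B^{(1)} \subset \cdots$ along $\pi: E \to B$ yields a filtration $F_p E := \pi^{-1}(B^{(p)})$ of $E$ which is exhaustive and bounded below. The associated long exact sequences of the pairs $(F_p E, F_{p-1} E)$ assemble into an exact couple, which by the standard exact-couple machinery produces a convergent first-quadrant spectral sequence with $E^1_{p,q} = H_{p+q}(F_p E, F_{p-1} E)$ and target $H_{p+q}(E;R)$.

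The main computational step is identifying $E^1$ and the differential $d^1$. For each $p$-cell $e_\alpha^p$ of $B$ with characteristic map $\Phi_\alpha: (D^p, S^{p-1}) \to (B^{(p)}, B^{(p-1)})$, the pullback $\Phi_\alpha^* E$ is a fibration over a disk, hence trivializable up to fiber homotopy equivalence as $D^p \times F$. Using excision and the long exact sequence of the pair together with the Künneth theorem, one obtains
$$
E^1_{p,q} \;\cong\; \bigoplus_{\alpha}\, H_q(F_\alpha;R),
$$
where $F_\alpha$ denotes the fiber over a chosen basepoint of $e_\alpha^p$. The monodromy of the fibration around loops in $B$ furnishes a $\pi_1(B)$-action on $H_q(F;R)$, turning $\mathcal{H}_q(F;R)$ into a local coefficient system, and the above identification is precisely $C_p(B;\mathcal{H}_q(F;R))$, the cellular chains with local coefficients.

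The subtle step — and the place I expect to spend the most effort — is verifying that $d^1$ coincides with the cellular differential of $B$ twisted by the local system. This requires tracking how the attaching map of a $(p{+}1)$-cell of $B$ lifts to $E$ and how the induced boundary map on $H_q$ of the fiber incorporates the monodromy along the attaching map. Once this identification is made, it follows directly from the definition of homology with local coefficients that
$$
E^2_{p,q} \;\cong\; H_p\bigl(B;\mathcal{H}_q(F;R)\bigr).
$$
Convergence is then routine: the filtration is bounded below (trivially) and exhaustive (since $B = \bigcup_p B^{(p)}$ and $\pi$ preserves colimits over the skeletal filtration for a Serre fibration), so the spectral sequence converges to $H_*(E;R)$ with the filtration induced by $\{F_p E\}$. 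The connectedness hypothesis on $F$ is what makes the local coefficient system well-defined via $\pi_1(B)$-action on a single fixed fiber.
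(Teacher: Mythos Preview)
Your outline is the standard skeletal-filtration construction of the Leray--Serre spectral sequence and is correct as a proof sketch; the identification of $d^1$ with the twisted cellular differential is indeed the crux, and you flag it appropriately. The paper, however, does not prove this theorem at all: it simply cites McCleary's \emph{A User's Guide to Spectral Sequences}, chapter~5, and moves on. So there is nothing to compare---your proposal supplies strictly more than the paper does, and what you supply is exactly the argument one finds in the cited reference.
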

\begin{proof}
See \cite{mcleary} chapter 5.
\end{proof}

\begin{proposition}\label{long_exact}
For any exact sequence $0\to M' \to M\to M''\to 0$ of $Z[G]$-module, there is a long exact sequence 
\begin{align*}
    \cdots \to H_1(G;M')& \to H_1(G;M)\to H_1(G;M'')\to&  \\
    &H_0(G;M')\to H_0(G;M)\to H_0(G;M'')\to 0. & \\
\end{align*}
\begin{proof}
See \cite{kennethbrown} chapter I, section 6.
\end{proof}    
\end{proposition}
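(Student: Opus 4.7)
The plan is to prove this classical result of group homology by identifying $H_*(G;-)$ with the derived functors $\mathrm{Tor}^{\mathbb{Z}[G]}_*(\mathbb{Z},-)$ and then applying the standard long exact sequence of $\mathrm{Tor}$. First I would pick a free (or merely projective) resolution $F_\bullet \twoheadrightarrow \mathbb{Z}$ of the trivial $\mathbb{Z}[G]$-module $\mathbb{Z}$; for concreteness, one can use the bar resolution, which provides a canonical explicit choice. By the definition of group homology, $H_n(G;N)$ is the $n$-th homology of the chain complex $F_\bullet \otimes_{\mathbb{Z}[G]} N$ for any left $\mathbb{Z}[G]$-module $N$.

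Next, I would apply the functor $F_\bullet \otimes_{\mathbb{Z}[G]} -$ to the given short exact sequence $0 \to M' \to M \to M'' \to 0$. Since each $F_n$ is projective and hence flat over $\mathbb{Z}[G]$, tensoring preserves exactness degree by degree, producing a short exact sequence of chain complexes
\[
0 \to F_\bullet \otimes_{\mathbb{Z}[G]} M' \to F_\bullet \otimes_{\mathbb{Z}[G]} M \to F_\bullet \otimes_{\mathbb{Z}[G]} M'' \to 0.
\]
The classical snake-lemma zig-zag construction then yields a long exact sequence in homology, and since those homology groups are precisely $H_n(G;M')$, $H_n(G;M)$, and $H_n(G;M'')$ respectively, we obtain the long exact sequence asserted in the proposition. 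The sequence terminates on the right with $H_0(G;M'') \to 0$ because $F_\bullet$ is concentrated in non-negative degrees, so $H_{-1}$ vanishes.

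No substantive obstacle arises, as the argument is pure homological algebra. The only input beyond the definitions is the flatness of each $F_n$, which is immediate from projectivity. If one prefers to avoid invoking general derived-functor machinery, one can instead work directly with the bar resolution and construct the connecting homomorphism by hand via the explicit cocycle formulas, which is the route taken in \cite{kennethbrown}.
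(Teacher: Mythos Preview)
Your proposal is correct and is precisely the standard argument one finds in the reference the paper cites; the paper itself does not give a proof but simply points to \cite{kennethbrown}, Chapter~III (the section numbering in the paper appears to be a typo). There is nothing to add: the projective-resolution-plus-snake-lemma route you outline is the canonical proof.
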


\begin{lemma}{\textbf{(Shapiro's lemma)}}
Let $H$ be a subgroup of $G$ and $A$ an $H$-module. Then 
$$
H_*(G; \Ind_H^G(A))\cong H_*(H;A).
$$
\begin{proof}
See \cite{weibel} section 6.3.
\end{proof}
\end{lemma}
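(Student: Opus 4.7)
The plan is to derive Shapiro's lemma from the Tor interpretation of group homology, using the crucial fact that $\z[G]$ is free as a $\z[H]$-module. Recall the identification $H_*(G;M) = \operatorname{Tor}_*^{\z[G]}(\z, M)$, where $\z$ denotes the trivial (right) $\z[G]$-module, and that by definition $\Ind_H^G A = \z[G] \otimes_{\z[H]} A$.

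First I would pick a set $\{g_i\}_{i\in I}$ of left coset representatives for $H$ in $G$. The decomposition $G = \bigsqcup_i g_i H$ yields $\z[G] = \bigoplus_i g_i\,\z[H]$ as a right $\z[H]$-module, so $\z[G]$ is \emph{free} over $\z[H]$. Consequently, any free right $\z[G]$-module restricts to a free right $\z[H]$-module, and any projective right $\z[G]$-module (being a direct summand of a free one) restricts to a projective right $\z[H]$-module. Thus, taking any projective resolution $P_\bullet \twoheadrightarrow \z$ in right $\z[G]$-modules, the same complex, viewed via restriction of scalars, is a projective resolution of $\z$ over $\z[H]$.

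The computation then follows from associativity of the tensor product: since $\z[G]$ is naturally a $(\z[G], \z[H])$-bimodule,
\[
P_\bullet \otimes_{\z[G]} \bigl(\z[G] \otimes_{\z[H]} A\bigr) \;\cong\; \bigl(P_\bullet \otimes_{\z[G]} \z[G]\bigr) \otimes_{\z[H]} A \;\cong\; P_\bullet \otimes_{\z[H]} A.
\]
Taking homology of both sides gives
\[
H_*(G;\Ind_H^G A) = \operatorname{Tor}_*^{\z[G]}(\z, \Ind_H^G A) \;\cong\; \operatorname{Tor}_*^{\z[H]}(\z, A) = H_*(H; A),
\]
which is the claimed isomorphism.

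The only point that requires real care is the preservation of projectivity under restriction of scalars from $\z[G]$ to $\z[H]$; this is the main (mild) obstacle and rests entirely on the freeness of $\z[G]$ over $\z[H]$ established in the second paragraph. Everything else is a formal manipulation of derived tensor products that works uniformly in all degrees.
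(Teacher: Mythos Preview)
Your argument is correct and is precisely the standard proof of Shapiro's lemma: this is the argument given in Weibel, \S6.3, which is exactly what the paper cites in lieu of a proof. There is nothing to add or compare---you have simply supplied the details that the paper delegates to the reference.
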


\section{Homology of Cyclic Group}\label{cyclic_group_homology}
Most of the topics we discuss in this section is about calculating the homology of a cyclic group $G$ with coefficients in a $\z[G]$-module $M$. A rigorous treatment of group (co)homology can be found in \cite{brown_cohomology,weibel}. 
\begin{definition}
Given a group $G$, let $P$ be a projective resolution of the trivial module $\z$ over $\z[G]$ and $M$ be a $\z[G]$-module. Then the homology of $G$ with coefficients in $M$ is defined as 
$$
H_*(G;M) = H_*(P\otimes_{\z[G]}M).
$$
\end{definition}
\begin{example}\label{example_cyclic_grp}
Let $G$ be the cylcic group $\z_n=\langle \sigma \rangle$ of order $n$. We will calculate $H_*(\z_n;\z_{\mathrm{triv}})$ and $H_*(\z_n;\z[z_n])$. One can have a periodic projective resolution of the trivial $\z[\z_n]$-module $\z_{\triv}$ as 
\begin{eqnarray}\label{resolution_zn}
\dots\to\z[\z_n]\xrightarrow{\sigma-1} \z[\z_n]\xrightarrow{1+\cdots+\sigma^{n-1}} \z[\z_n]\xrightarrow{\sigma-1} \z[\z_n]\xrightarrow{\epsilon} \z_{\mathrm{triv}}
\end{eqnarray}
where $\epsilon$ is the augmentation map which sends $\sigma\in \z_n$ to $1\in\z_{\mathrm{triv}}$. Let $N = 1+\sigma +\cdots+\sigma^{n-1}$ be the norm element of $\z[\z_n]$. It can be showed that $\ker \epsilon = (\sigma-1). \z[\z_n]$ and $\ker(\sigma-1) = N.\z[\z_n]$. Hence we get
\begin{eqnarray*}
H_*(\z_n;\ \z[\z_n]) = \begin{cases}
                                        \z  & * = 0 \\
                                         0  & * \neq 0 \\
                                \end{cases}
\end{eqnarray*}
\end{example} 
From the above resolution, we can calculate the homology of $\z_n$ with coefficients in the trivial $\z[\z_n]$-module $\z_{\mathrm{triv}}$. Apply $-\otimes_{\z[\z_n]}\z$ on the resolution part to get
\begin{eqnarray*}
\cdots\to \z \xrightarrow{0} \z \xrightarrow{\times n} \z \xrightarrow{0}\z\to 0
\end{eqnarray*}
Therefore we have 
\begin{eqnarray*}
H_*(\z_n;\ \z_{\mathrm{triv}}) = \begin{cases}
                                        \z     & * = 0 \\
                                         \z_n  & *= 1,3,\cdots \\
                                         0     & *= 2,4,\cdots \\
                                \end{cases}
\end{eqnarray*}

In general, we can use the same technique to calculate the homology of $\z_n$ with coefficients in an arbitrary $\z[\z_n]$-module $M$. Applying $-\bigotimes_{\z[\z_n]}M$ on the resolution part of equation \ref{resolution_zn}, we have
\begin{eqnarray*}
\dots\to \z[\z_n]\otimes_{\z[\z_n]}M \xrightarrow{(\sigma-1)\otimes Id} \z[\z_n]\otimes_{\z[\z_n]}M & \xrightarrow{(1+\cdots+\sigma^{n-1})\otimes Id} \z[\z_n]\otimes_{\z[\z_n]}M\\
& \xrightarrow{(\sigma-1)\otimes Id} \z[\z_n]\bigotimes_{\z[\z_n]}M\to 0 .\\
\end{eqnarray*}
which is equivalent to 
\begin{eqnarray*}
\dots\to M \xrightarrow{1-\sigma}M \xrightarrow{1+\cdots+\sigma^{n-1}} M \xrightarrow{1-\sigma}M\to 0 .
\end{eqnarray*}
From this we have the homology of the cyclic group $\z_n$ with coefficients in $M$ as follows,
\begin{eqnarray*}
H_*(\z_n; M) = \begin{cases} M/(1-\sigma)M & * = 0 \\
                \{x\in M\mid \sigma x = x\}/NM &  * = 1,3,\cdots \\
                \{x\in M\mid Nx = 0\}/(1-\sigma)M & * = 2,4,\cdots\\
                \end{cases}
\end{eqnarray*}
where $N = 1+\sigma+\cdots+\sigma^{n-1} \in \z[\z_n]$. Note that for an arbitrary $\z[G]$-module $M$, the sets $M_G = M/\{gx-x\mid g\in G, x\in M\}$ and $M^G= \{x\in M\mid gx= x\ \forall g\in G\}$ are called \textit{coinvariants} and \textit{invariant subgroup} of $M$ (See \cite[chapter~6]{weibel} for details).

\section{Calculating the Leray-Serre Spectral Sequence}
{
\newcommand*\zz{|[draw,circle]| \z_2}
\newcommand{\X}{Z_{\K_6}(D^1,S^0)}
We calculate the $E^2$-page of the Leray-Serre spectral sequence of the fibration:
\begin{equation}\label{fibration}
\begin{tikzcd}
\Z_{\K_n}(D^1,S^0) \arrow{r}{}  & E\z_n\times_{\z_n}\Z_{\K_n}(D^1,S^0)\arrow{d} \\
& B\z_n
\end{tikzcd}
\end{equation}

for $n=6, 8$. In the previous chapter, we have described the $\z[\z_n]$-module structure of the homology of fiber $\M$. Now we compute $$E^2_{*,*}=H_*(B\z_n; H_*(\M))\cong H_*(\z_n; H_*(\M)).$$
\subsection*{Calculation with \texorpdfstring{$n=6$}{n=6}}
  We computed earlier that as a $\z[\z_6]$-module $$H_1(\Z_{K_6}(D^1,S^0)) \cong (\Ind_{\z_3}^{\z_6}\        
  \mathbb{Z}^2)\bigoplus_{i=1}^{2}\Ind_{\z_2}^{\z_6}\  \mathbb{\z_{\mathrm{sign}}} 
(\bigoplus_{i=1}^{4} \z[\z_6]).$$
From this we get, 
\begin{eqnarray*}
H_*(B\z_6; H_1(\Z_{K_6}(D^1,S^0)))\cong & H_*(\z_6;\Ind_{\z_3}^{\z_6}\  \mathbb{Z}^2)\bigoplus\limits_{i=1}^{2} H_*(\z_6;\Ind_{\z_2}^{\z_6}\ \mathbb{\z_{\mathrm{sign}}})\\
&\bigoplus\limits_{i=1}^{4} H_*(\z_6; \z[\z_6])
\end{eqnarray*}
In previous example (see example \ref{example_cyclic_grp}), we already worked out 
$$
H_*(\z_6;\z[\z_6])\cong \begin{cases}
                                        \z  & * = 0 \\
                                         0  & * \neq 0 \\
                                \end{cases}
$$
                                
By Shapiro's lemma, one can show that $H_*(\z_6; \ \Ind_{\z_2}^{\z_6}\z_{\mathrm{sign}})\cong H_*(\z_2;\ \z_{\mathrm{sign}})$. Applying $-\otimes_{\z[\z_2]}\z_\mathrm{sign}$ on the free resolution (see~\cref{resolution_zn}) of $\z_{\mathrm{triv}}$ one can get
\begin{eqnarray*}
\cdots\to \z \xrightarrow{\times 2}\z \xrightarrow{\times 0}\z\xrightarrow{\times 2}\z\to 0
\end{eqnarray*}
Hence, we get $$H_*(\z_6; \Ind_{\z_2}^{\z_3}\ \z_{\mathrm{sign}})\cong \begin{cases}
                                        \z_2  & * = 0,2,\cdots \\
                                         0  &   * = 1,3,\cdots \\
                                \end{cases}
$$

Similiarly, $H_*(\z_6; \ \Ind_{\z_3}^{\z_6}\z^2)\cong H_1(\z_3;\ \z^2)$.  We get the periodic chain complex
\begin{eqnarray*}
\cdots\to \z\oplus\z \xrightarrow{A-I}\z\oplus\z\xrightarrow{I+A+A^2}\z\oplus\z\xrightarrow{A-I}\z\oplus\z\to 0
\end{eqnarray*}
where $A=\begin{pmatrix}0&-1\\ 1&-1\end{pmatrix}$. By direct calculation we get, $A-I = \begin{pmatrix}-1 &-1\\1&-2\end{pmatrix},\ I+A+A^2 = 0$, which implies $\ker(A-I)=0,\ \ker(I+A+A^2)=\z\oplus\z,\ \coker(A-I)\cong\z_3$. Hence we get, $H_*(\z_3; \z\oplus \z)\cong\begin{cases}
                                        \z_3  & * = 0,2,\cdots \\
                                         0  &   * = 1,3,\cdots \\
                                \end{cases} $ 

Using all these calculations, we get the $E^2$ page as shown in \cref{fig:n6}. Now we show that the spectral sequence collapses at the $E^2$ page. First note that all the differential $d^2_{p,q}\colon E^2_{p,q}\to E^2_{p-2,q+1}$ are zero since the terms on each horizontal line are periodic and $E^2_{1,1}=0$. One can check that there is a cross-section of the fibration (\cref{fibration}) since the $\Zn$-action on $\M$ has two fixed points. Therefore the differentials out of $E^2_{*,0}$ are all zero. Also, the only possible differentials on the $E^3$ page is $d^3:E^3_{3,0}\to E^3_{0,2}$, but this must be zero since $E^3_{0,2}\cong \z$ is torsion-free and $E^3_{3,0}\cong \z_6$ is a torsion group. 
\begin{figure}[t]
    \centering
    \begin{tikzpicture}
    \matrix (m) [matrix of math nodes,
    nodes in empty cells,nodes={minimum width=2ex,
    minimum height=8ex,outer sep=-0pt},
    column sep=0ex,row sep=1ex]{
                &      &     &     &   &   &   &          \\
          2     &  \z  &  \z_6 & 0   &  \z_6  &  0 & \z_6 & \cdots \\
          1     &  \z^4\oplus (\z_2)^2\oplus \z_3  &  0    & (\z_2)^2\oplus \z_3  & 0     & (\z_2)^2\oplus \z_3 & 0   & \cdots        \\
          0     &  \z  & \z_6  & 0   &  \z_6  & 0  & \z_6 & \cdots \\
        \strut &   0  &  1    &  2  &   3   &  4  & 5  &\cdots\strut \\};
    \draw[thick] (m-1-1.east) -- (m-5-1.east) ;
    \draw[thick] (m-5-1.north) -- (m-5-8.north) ;
    \end{tikzpicture}
    \caption{$E^2$-page for $n=6$}  \label{fig:n6}
\end{figure}

\subsection*{Calculation with \texorpdfstring{$n=8$}{n=6}:}
For $n=8$, we have $H_1(\Z_{\K_8}(D^1,S^0)) \cong \bigoplus\limits_{i=1}^{3} \Ind_{\z_2}^{\z_8}\ \z_\mathrm{sign}\
\bigoplus \Ind_{\z_4}^{\z_8}\ \z^3\ \bigoplus\limits_{i=1}^{30} \z[{\z_8}]$ as a $\z[\z_n]$-module.
We only need to calculate $H_*(\z_8; \Ind_{\z_4}^{\z_8}\ \z^3)\cong H_*(\z_4;\z^3)$ where the $\z_4$-action has matrix representation $A = \begin{pmatrix} 0 & 0  &-1\\
                                                        1 & 0  &-1\\
                                                        0 & 1  &-1\\
                                        \end{pmatrix}$. 
Applying $-\otimes_{\z[\z_4]} \z^3$ on the free resolution of $\z_{\mathrm{triv}}$, one can get 
\begin{eqnarray*}
\cdots\to \z\oplus\z\oplus\z \xrightarrow{A-I}\z\oplus\z\oplus\z\xrightarrow{I+\cdots+A^3}\z\oplus\z\oplus\z\xrightarrow{A-I}\z\oplus\z\oplus\z\to 0
\end{eqnarray*}

By direct computation, $A-I = \left(\begin{matrix}
-1 & 0 & -1 \\
1 & -1 & -1 \\
0 & 1 & -2
\end{matrix}\right), \ I+\cdots+A^3= 0$, which implies $\coker(A-I)\cong \z_4,\ \ker(A-I)=0\ \ker(I+\cdots+A^3)=\z^3$. Hence we get, 
$$H_*(\z_4; \z\oplus \z\oplus\z)\cong\begin{cases}
                                        \z_4  & * = 0,2,\cdots \\
                                         0  &   * = 1,3,\cdots \\
                                \end{cases} $$

We get the $E^2$-page as shown in \cref{fig:n8}. Now with a similar argument as before, one can also conclude that the spectral sequence collapses at the $E^2$-page.
\begin{figure}[t]
    \begin{center}
    \begin{tikzpicture}
    \matrix (m) [matrix of math nodes,
    nodes in empty cells,nodes={minimum width=2ex,
    minimum height=8ex,outer sep=-0pt},
    column sep=0ex,row sep=1ex]{
                &      &     &     &   &   &   &          \\
          2     &  \z  &  \z_8 & 0   &  \z_8  &  0 & \z_8 & \cdots \\
          1     &  \z^{30}\oplus (\z_2)^3\oplus \z_4  &  0    & (\z_2)^3\oplus \z_4  & 0     & (\z_2)^3\oplus \z_4 & 0   & \cdots        \\
          0     &  \z  & \z_8  & 0   &  \z_8  & 0  & \z_8 & \cdots \\
        \strut &   0  &  1    &  2  &   3   &  4  & 5  &\cdots\strut \\};
    \draw[thick] (m-1-1.east) -- (m-5-1.east) ;
    \draw[thick] (m-5-1.north) -- (m-5-8.north) ;
    \end{tikzpicture}
    \end{center}
    \caption{$E^2$-page for $n=8$}  
    \label{fig:n8}
\end{figure}

\section{General Case}
In this section, we will give a description of the $E^2$-page for any $n\geq 3$. Let $X=\M,\ G= \z_n$. We calculate the $E^2$-page of the fiber bundle $X\to EG\times_G X\to BG$. We will prove the following proposition:
\begin{proposition}\label{e2page}
The Leray-Serre spectral sequence of the fibration $X\to EG\times_G X\to BG$ collapses at the $E^2$-page.
\end{proposition}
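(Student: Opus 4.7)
The plan is to show that each potential differential vanishes. Since the fiber $X = \M$ is a closed oriented surface the $E^2$-page lies in rows $q = 0, 1, 2$; and since $\z_n$ acts trivially on $H_0(X)$ and $H_2(X)$ (by the orientation-preservation argument from Chapter~\ref{chap:chapter-6}) the top and bottom rows are both $H_p(\z_n; \z)$. Only $d^2 \colon E^2_{p, q} \to E^2_{p-2, q+1}$ and the single family $d^3 \colon E^3_{p, 0} \to E^3_{p-3, 2}$ from row $0$ to row $2$ can be nonzero.

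The first step would be to kill all differentials emanating from the base row. The $\z_n$-action on $X$ fixes $(0, \dots, 0)$ and $(1, \dots, 1)$, so there is a $\z_n$-equivariant map $\ast \to X$, giving a section $BG \to EG \times_{\z_n} X$ of the fibration. This section splits the edge homomorphism, so $d^r_{p, 0} = 0$ for every $p$ and every $r \geq 2$; in particular both the $d^3$ and the $d^2$ leaving row $0$ vanish.

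What remains is $d^2 \colon E^2_{p, 1} \to E^2_{p-2, 2}$. Since positive-degree homology of a finite group is torsion, $E^2_{p, 1}$ is torsion for all $p \geq 1$; while the target $E^2_{p-2, 2}$ is $\z$ at $p = 2$, is zero for even $p \geq 4$, and is $\z_n$ for odd $p \geq 3$. The first two cases vanish automatically (torsion into $\z$ or into $0$). For the remaining odd case, the $2$-periodicity of $H_*(\z_n; -)$ in positive degrees reduces the vanishing for every odd $p \geq 3$ to the single claim $E^2_{1, 1} = H_1(\z_n; H_1(X)) = 0$.

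For this key computation I would use the decomposition $H_1(X) = \bigoplus_w \Ind_{\z_{n/d_w}}^{\z_n} N_w$ from Theorem~\ref{module_structure}, together with Shapiro's lemma, to reduce to showing $H_1(\z_{n/d}; N_w) = 0$ for each summand. The case $d = n$ is immediate because $N_w$ is free over $\z[\z_n]$. For $1 < d < n$ I would identify $N_w$ with $\tilde H_0(|\K_I|)$ (the factor of the BBCG stable splitting associated to $w$) and run the long exact sequence coming from
\[
0 \longrightarrow \tilde H_0(|\K_I|) \longrightarrow \z[C(\K_I)] \longrightarrow \z \longrightarrow 0,
\]
where $C(\K_I)$ is the set of path components of $\K_I$. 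The main obstacle will be checking that $\z_{n/d}$ acts freely on $C(\K_I)$: this is precisely where primitivity of the Lyndon word $w$ enters, since two blocks of zeros in the cyclic word $w^{n/d}$ can coincide only when their starting positions agree modulo $n$, ruling out any nontrivial stabilizer. Granting freeness, $\z[C(\K_I)]$ is free over $\z[\z_{n/d}]$, so $H_{\geq 1}(\z_{n/d}; \z[C(\K_I)]) = 0$; combined with $H_2(\z_{n/d}; \z) = 0$, the long exact sequence forces $H_1(\z_{n/d}; N_w) = 0$, closing the argument.
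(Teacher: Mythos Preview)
Your proposal is correct and follows essentially the same route as the paper: section argument to kill differentials out of row $0$, torsion/periodicity to reduce the remaining $d^2$'s to the single vanishing $E^2_{1,1}=H_1(\z_n;H_1(X))=0$, and then Shapiro's lemma together with a short exact sequence to compute $H_1(\z_{n/d};N_w)$. The only cosmetic difference is that you write the key short exact sequence geometrically as $0\to\tilde H_0(|\K_I|)\to\z[C(\K_I)]\to\z\to 0$ and invoke freeness of the $\z_{n/d}$-action on components, whereas the paper writes it algebraically as $0\to\ker\epsilon\to(\z[\z_{n/d}])^{\iota(w)}\xrightarrow{\epsilon}\z\to 0$ and identifies $\ker\epsilon\cong N_w$ via the explicit matrix; these are the same sequence once one observes that the $\iota(w)\cdot n/d$ components of $\K_I$ form $\iota(w)$ free $\z_{n/d}$-orbits.
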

As we have seen in section \ref{cyclic_group_homology}, the homology of cyclic group is periodic. To prove the above proposition we only need the check that $E^2_{1,1}\cong 0.$ Now recall the notation of the previous chapter. 

\begin{tabular}{ccl}
     $\mathfrak{L}_n$&=& set of $n$-length binary Lyndon words  \\
     $\iota(w)$&=& number of blocks of 0's in a Lyndon word $w$\\
     $L(n,k)$ &=& number of $n$-length Lyndon words with $k$ blocks of $0$
\end{tabular}
\newline
From theorem \ref{module_structure} in the previous chapter, we know that $$H_1(X)\cong \left(\bigoplus\limits_{\substack{w\in \mathfrak{L}_d\\ d\mid n,  1<d<n}} \Ind_{\z_{n/d}}^{\z_n} N_w\right) \bigoplus_{w\in \mathfrak{L}_n}(\z[\z_n])^{\iota(w)-1}$$

where $N_w$ is isomorphic to $\z^{\iota(w)n/d-1}$ with the $\z_{n/d}$-action represented by the following matrix
$$
\mathbf{M}=
 \left(\begin{array}{c|c|c}
           \begin{array}{cccc}
    \mathbf{A}_{n/d}& & &\mathbf{0}\\
     &\mathbf{A}_{n/d}& &          \\
     &&\ddots&\\
     \mathbf{0}& & & \mathbf{A}_{n/d} 
    \end{array}    & \mathbf{0} & \mathbf{-1} \\
    \hline
    \mathbf 0 & \begin{array}{c}
                  \mathrm{0}\\
                  \mathbf{I}_{n/d-2} 
                   \end{array}  &   \begin{array}{c}-1\\ \mathbf{-1}  \end{array}  
 \end{array}\right)
$$
where $\mathbf{A}_{n/d}$ is the standard cycle matrix of dimension $n/d$.
Therefore it suffices to prove the following lemma.
\begin{lemma}
$H_*(\z_n; \ \Ind_{\z_{n/d}}^{\z_n} N_w )\cong \begin{cases}
                                    \z^{\iota(w)-1}\times \z_{n/d} & *=0\\
                                    \z_{n/d}  &   * = 2,4,\cdots \\
                                     0        &   * = 1,3,\cdots \\
                            \end{cases}$
\end{lemma}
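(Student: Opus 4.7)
The plan is to apply Shapiro's lemma and then compute the homology via a short exact sequence. Write $m := n/d$, so that $\z_{n/d} = \langle \sigma^d \rangle$ has order $m$. By Shapiro's lemma,
\[
H_*(\z_n;\, \Ind_{\z_{n/d}}^{\z_n} N_w) \;\cong\; H_*(\z_{n/d};\, N_w),
\]
so it suffices to compute the right--hand side.

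The key step is to realize $N_w$ as the quotient of a free $\z[\z_{n/d}]$-module by a trivial copy of $\z$. Set
\[
\widetilde{N}_w \;:=\; \bigoplus_{j=1}^{\iota(w)} \z[\z_{n/d}],
\]
viewed as the free $\z$-module on the enlarged basis $\{e_{(j-1)m+b} : 1 \le j \le \iota(w),\ 1 \le b \le m\}$ of $\iota(w)\, m$ generators, with $\sigma^d$ acting cyclically within each row: $\sigma^d \cdot e_{(j-1)m+b} = e_{(j-1)m + (b+1 \bmod m)}$. Geometrically this corresponds to adjoining the ``wrap--around'' edge $e_{\iota(w) m} = (v_{\iota(w)}^m, v_1^1)$ to the basis of $N_w$. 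Inspecting the block form of $\mathbf{M}$ in Theorem~\ref{module_structure} (in particular the last column of $-\mathbf{1}$'s), one checks that the rule $e_i \mapsto e_i$ for $i < \iota(w) m$, together with $e_{\iota(w) m} \mapsto -\sum_{i<\iota(w) m} e_i$, defines a surjective $\z[\z_{n/d}]$-equivariant map $\widetilde{N}_w \twoheadrightarrow N_w$ whose kernel is the trivial submodule $\z \cdot \sum_{i=1}^{\iota(w) m} e_i$. We therefore obtain a short exact sequence of $\z[\z_{n/d}]$-modules
\[
0 \to \z_{\triv} \to \widetilde{N}_w \to N_w \to 0.
\]

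Now apply the long exact sequence in group homology (extending Proposition~\ref{long_exact}). Using $H_0(\z_{n/d};\z[\z_{n/d}]) = \z$ and $H_i(\z_{n/d};\z[\z_{n/d}]) = 0$ for $i \ge 1$, we get $H_0(\z_{n/d};\widetilde{N}_w) = \z^{\iota(w)}$ and $H_i(\z_{n/d};\widetilde{N}_w) = 0$ for $i \ge 1$. Combined with the standard values $H_0(\z_{n/d};\z_{\triv}) = \z$, $H_{2k+1}(\z_{n/d};\z_{\triv}) = \z_{n/d}$ for $k \ge 0$, and $H_{2k}(\z_{n/d};\z_{\triv}) = 0$ for $k \ge 1$, one reads off for $k \ge 1$:
\[
H_{2k+1}(\z_{n/d}; N_w) \cong H_{2k}(\z_{n/d};\z_{\triv}) = 0, \qquad H_{2k}(\z_{n/d}; N_w) \cong H_{2k-1}(\z_{n/d};\z_{\triv}) = \z_{n/d}.
\]
The low--degree piece becomes
\[
0 \to H_1(\z_{n/d}; N_w) \to \z \xrightarrow{\delta} \z^{\iota(w)} \to H_0(\z_{n/d}; N_w) \to 0,
\]
where $\delta$ is induced by the inclusion $\z_{\triv} \hookrightarrow \widetilde{N}_w$. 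Since the coinvariants of each free summand $\z[\z_{n/d}]$ are $\z$ (identifying all basis elements of that summand), the class of $\sum_{i=1}^{\iota(w) m} e_i$ in $H_0(\z_{n/d}; \widetilde{N}_w) = \z^{\iota(w)}$ is $(m, m, \ldots, m)$, so $\delta(1) = (m, m, \ldots, m)$. Because $(1,1,\ldots,1)$ extends to a $\z$-basis of $\z^{\iota(w)}$, this immediately yields $\ker \delta = 0$ and $\coker \delta \cong \z_{n/d} \oplus \z^{\iota(w) - 1}$, finishing all cases (including $d = n$, where $m = 1$ and both $\widetilde{N}_w = \z^{\iota(w)}$ and $N_w = \z^{\iota(w)-1}$ carry the trivial action).

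The main technical obstacle is verifying the $\z[\z_{n/d}]$-equivariance of $\widetilde{N}_w \twoheadrightarrow N_w$, which amounts to checking that the last column of $\mathbf{M}$ (encoding $\sigma^d \cdot e_{\iota(w) m - 1}$) is compatible with the cyclic relation $\sum_{i=1}^{\iota(w) m} e_i = 0$; this follows from the geometric interpretation of the basis elements as consecutive--block difference classes in the reduced $H_0$ of the disconnected subcomplex $|\K_I|$. Once equivariance is in place, the remainder of the argument is a formal manipulation of the long exact sequence.
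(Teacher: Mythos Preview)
Your proof is correct and follows the same overall strategy as the paper: apply Shapiro's lemma, then fit $N_w$ into a short exact sequence with the free module $\bigl(\z[\z_{n/d}]\bigr)^{\iota(w)}$ and a trivial copy of $\z$, and read off the answer from the long exact sequence.

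The one genuine difference is in the orientation of the short exact sequence. The paper identifies $N_w$ with the \emph{kernel} of the augmentation
\[
0 \to N_w \to \bigl(\z[\z_{n/d}]\bigr)^{\iota(w)} \xrightarrow{\epsilon} \z \to 0,
\]
asserting that the action on $\ker\epsilon$ has matrix $\mathbf{M}^T$; it then handles $H_0$ by a separate Smith normal form computation of $\mathbf{M}-\mathbf{I}$. You instead realize $N_w$ as the \emph{quotient}
\[
0 \to \z_{\triv} \to \bigl(\z[\z_{n/d}]\bigr)^{\iota(w)} \to N_w \to 0,
\]
checking equivariance directly against the last column of $\mathbf{M}$. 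This buys you a cleaner low-degree computation: $H_0$ and $H_1$ fall out of the explicit connecting map $\delta(1)=(m,\ldots,m)$ without any matrix normal-form work, and the identification of the module structure is tied directly to $\mathbf{M}$ rather than to its transpose. The two arguments are dual dimension-shifts of one another, so the difference is organizational rather than conceptual, but your version is tidier.
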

\begin{proof}
By Shapiro's lemma, we have $H_*(\z_n;\ \Ind_{\z_{n/d}}^{\z_n} N_w) \cong H_*(\z_{n/d};\ N_w)$. One can have the following short exact sequence, 
$$
0\to \ker \epsilon \to \overbrace{{\z[\z_{n/d}]\oplus \cdots \oplus \z[\z_{n/d}]}}^{\iota(w)\ \mbox{ copies of } \z[\z_{n/d}]} \xrightarrow{\epsilon} \z \to 0 
$$
where $\epsilon$ is the augmentation map. Note that $\ker \epsilon$ is isomorphic to $N_w$ as a $\z[\z_{n/d}]$-module, since one can show that the matrix representation of the $\z_{n/d}$-action on $\ker \epsilon$ is exactly $\mathbf M^T$ with respect to the standard $\z$-basis. Now by proposition \ref{long_exact}, we have the associated long exact sequence
\begin{eqnarray*}
\cdots \to H_2(\z_{n/d};\z)\to H_1(\z_{n/d};N_w)\to H_1(\z_{n/d};\ (\z[\z_{n/d}])^{\iota(w)}))\to& H_1(\z_{n/d};\z)\\
\to H_0(\z_{n/d};N_w)\to H_0(\z_{n/d};(\z[\z_{n/d}])^{\iota(w)})\to& H_0(\z_{n/d};\z) \\
\end{eqnarray*}
This implies that for $i>0$, we have 
$$
H_i(\z_{n/d};\ N_w) \cong   \begin{cases}
                                    \z_{n/d}  &   i = 2,4,\cdots \\
                                     0        &   i = 1,3,\cdots \\
                            \end{cases}
$$
To calculate the zeroth homology, we will directly compute the coinvariants of $N_w$. More precisely, we calculate the $\coker(\mathbf M-\mathbf I)$. After careful calculation, one can show that the Smith normal form of $\mathbf M- \mathbf I$ is equal to $\mathrm{Diag}({\ 1,\cdots,1},{n/d},{0,\cdots,0})$ where $0$ appears $\iota(w)-1$ many times. Hence we get, $\coker(\mathbf M- \mathbf I) \cong H_0(\z_{n/d};\ N_w) \cong \z^{\iota(w)-1}\times \z_{n/d}$. 
\end{proof}

From this lemma, we can conclude that $E^2_{1,1} = H_1(\z_n;\ H_1(X))=0$. Hence the proof of proposition \ref{e2page} follows.

Moreover, we can give a complete description of each term of the $E^2$-page of the sprectral sequence. Let $r_n= \sum\limits_{k=1}^{\floor{\frac{n}{2}}} (k-1) L(n,k)$ and note that the $\z$-rank of the torsion-free part of $E^2_{0,1}$ is equal to $R_n =\sum\limits_{\substack{d\mid n\\d>1}} r_d$. Similarly, the torsion part of $E^2_{0,1}$ is equal to $\bigoplus\limits_{\substack{d\mid n\\1<d<n}} (\z_{n/d})^{r_d}$. Hence, one can have the $E^2$-page of the Leray-Serre spectral sequence as shown in \cref{fig:e2page}. 
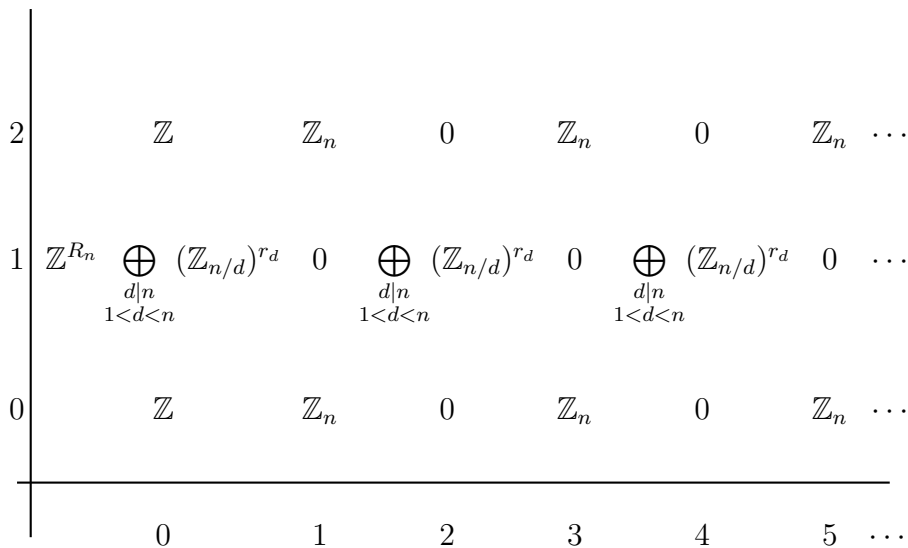
\begin{figure}[t]
    \centering
    \begin{tikzpicture}
    \matrix (m) [matrix of math nodes,
    nodes in empty cells,nodes={minimum width=2ex,
    minimum height=8ex,outer sep=-0pt},
    column sep=0ex,row sep=1ex]{
                &      &     &     &   &   &   &          \\
          2     &  \z  &  \z_n & 0   &  \z_n  &  0 & \z_n & \cdots \\
          1     &  \z^{R_n}\bigoplus\limits_{\substack{d\mid n\\1<d<n}} (\z_{n/d})^{r_d}  &  0    & \bigoplus\limits_{\substack{d\mid n\\1<d<n}} (\z_{n/d})^{r_d} & 0     & \bigoplus\limits_{\substack{d\mid n\\1<d<n}} (\z_{n/d})^{r_d} & 0   & \cdots        \\
          0     &  \z  & \z_n  & 0   &  \z_n  & 0  & \z_n & \cdots \\
        \strut &   0  &  1    &  2  &   3   &  4  & 5  &\cdots\strut \\};
    \draw[thick] (m-1-1.east) -- (m-5-1.east) ;
    \draw[thick] (m-5-1.north) -- (m-5-8.north) ;
    \end{tikzpicture}
    \caption{$E^2$-page for the Leray-Serre spectral sequence}  \label{fig:e2page}
\end{figure}

}

\section{Hilbert-Poincare Series of \texorpdfstring{$E\z_n\times_{\z_n} \M$}{Homotopy Orbit Space}}

Let $k$ be a field of charateristic $0$ or $p$ with $p\nmid n$. Similar to the previous section, we can calculate the $E^2$-page of the Leray-Serray spectral sequence with coefficients in $k$. We get $E^2_{p,q} = H_p(B\z_n;\ H_q(X; k)) = 0$ for $p>0, q>=0$. In the $E^2$-page, only nonzero terms are 
\begin{eqnarray*}
E^2_{0,q}=H_0(BG;\ H_q(X;\ k))\cong (H_q(X;\ k))_{\z_n} &\mbox{\quad for } q=0,1,2
\end{eqnarray*}
the set of coinvariants of $H_q(X;\ k)$ under the $\z_n$-action. Since $k$ is a field of characterstic $0$ or prime to $n$, we have the $E^2$-page as shown in \cref{fig:e2page_field}.

\begin{figure}[t]
    \begin{center}
    \begin{tikzpicture}
    \matrix (m) [matrix of math nodes,
    nodes in empty cells,nodes={minimum width=10ex,
    minimum height=8ex,outer sep=-0pt},
    column sep=0ex,row sep=1ex]{
                &      &     &     &        &          \\
          2     &  k  &  0 & 0   &  0    & \cdots \\
          1     &  \bigoplus\limits_{i=1}^{R_n} k  &  0    & 0   & 0   & \cdots        \\
          0     &  k  & 0  & 0   &  0  &  \cdots \\
        \strut &   0  &  1    &  2  &   3    &\cdots\strut \\};
    \draw[thick] (m-1-1.east) -- (m-5-1.east) ;
    \draw[thick] (m-5-1.north) -- (m-5-6.north) ;
    \end{tikzpicture}
    \end{center}
    \caption{$E^2$-page for the Leray-Serre spectral sequence over field}  \label{fig:e2page_field}
\end{figure}

Here $R_n$ denotes the $\z$-rank of $(H_1(X;\ \z)_{\z_n}$. 
Let $P(Y,t)$ denote the Hibert Poincare series of a finite CW complex $Y$. From the above discussion we have proved the following lemma. 
\begin{lemma}
Let $Y=E\z_n\times _{\z_n}\M$, then the Hilbert-Poincare series of $Y$ is given by
\begin{equation}P(Y , t) = 1 + R_n \ t + t^2
\end{equation}
where 
\begin{equation}R_n = \sum\limits_{\substack{d\mid n\\ d>1}}\sum\limits_{1\leq k \leq \floor{\frac{n}{2}}} (k-1)\ L(n,k).
\end{equation}
\end{lemma}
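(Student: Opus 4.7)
The plan is to exploit Maschke's theorem: since $\mathrm{char}(k)=0$ or is coprime to $n$, the order $n$ is invertible in $k$, the group ring $k[\z_n]$ is semisimple, and every $k[\z_n]$-module is projective. This forces $H_p(\z_n;M)=0$ for all $p>0$ and every $k[\z_n]$-module $M$. Applying this to the Leray-Serre spectral sequence of $\M\to Y\to B\z_n$ with $k$-coefficients, only the column $p=0$ survives, with $E^2_{0,q}\cong H_q(\M;k)_{\z_n}$. Being concentrated in a single column, the spectral sequence collapses trivially with no extension problems, so $H_q(Y;k)\cong H_q(\M;k)_{\z_n}$ and $P(Y,t)=\sum_q \dim_k H_q(\M;k)_{\z_n}\, t^q$.

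For $q=0$ and $q=2$, the $\z_n$-action on $H_q(\M;\z)$ is trivial (established in \cref{chap:chapter-6}), so the coinvariants remain a single copy of $k$; this accounts for the constant and $t^2$ terms of $P(Y,t)$. The only real work is to compute $\dim_k H_1(\M;k)_{\z_n}$.

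For that I would tensor the decomposition of Theorem \ref{module_structure} with $k$ and apply Shapiro's lemma summand-by-summand: $(\Ind_{\z_{n/d}}^{\z_n} N_w)_{\z_n}\cong (N_w)_{\z_{n/d}}$. The lemma proved in \cref{e2page} identifies $(N_w)_{\z_{n/d}}\cong \z^{\iota(w)-1}\oplus \z_{n/d}$; the torsion summand vanishes after $-\otimes_\z k$ because $n/d\mid n$ is invertible in $k$. The free summands $(\z[\z_n])^{\iota(w)-1}$ arising from $d=n$ likewise contribute $k^{\iota(w)-1}$ (their $\z_n$-coinvariants are $\z^{\iota(w)-1}$). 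Hence each Lyndon word $w$ of length $d$ with $d\mid n$, $d>1$, contributes exactly $\iota(w)-1$ dimensions over $k$.

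Finally, grouping Lyndon words by length $d$ and by gap number $k=\iota(w)$, there are $L(d,k)$ such words each contributing $k-1$, so
$$\dim_k H_1(Y;k)=\sum_{\substack{d\mid n\\ d>1}}\sum_{k=1}^{\floor{d/2}}(k-1)L(d,k)=R_n,$$
yielding $P(Y,t)=1+R_n t+t^2$. The main obstacle is bookkeeping: one must carefully match the orbit-representative indexing of Lyndon words used in \cref{chap:chapter-6} with the summand-by-summand application of Shapiro's lemma, and verify that the torsion part of the integral coinvariants is precisely what vanishes after tensoring with $k$. Both steps are essentially combinatorial and follow directly from the structure theorem, so no new homological input beyond Maschke is required.
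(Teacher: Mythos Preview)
Your proposal is correct and follows essentially the same approach as the paper: both use the Leray--Serre spectral sequence with $k$-coefficients, observe that $H_p(\z_n;M)=0$ for $p>0$ when $|\z_n|$ is invertible in $k$ (you invoke Maschke explicitly, the paper just asserts the vanishing), and identify $H_q(Y;k)$ with the coinvariants $H_q(\M;k)_{\z_n}$. Your summand-by-summand computation of the $q=1$ coinvariants via Shapiro's lemma and the structure theorem is exactly how the paper obtained $R_n$ in the preceding section; note that your formula $\sum_{d\mid n,\,d>1}\sum_k (k-1)L(d,k)$ agrees with the paper's definition $R_n=\sum_{d\mid n,\,d>1} r_d$ (the displayed formula in the lemma statement has $L(n,k)$ where $L(d,k)$ is intended).
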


Since we are working with field coefficients, and $\z_n$ acts simplicially on $\M$, by theorem III.2.4 in \cite{bredon}, we can say that the set of coinvariants $(H_*(\M; k))_{\z_n}$ is isomorphic to $H_*(X/{\z_n};\ k)$. Recall that $\M/\z_n$ is a closed compact orientable surface whose genus is given by the following formula (see chapter 1 lemma \ref{my_lemma1}).
$$
g(\M/\z_n) = 1 + 2^{n-3} - \frac{1}{2n}\sum_{d|n} \phi(d)2^{n/d} 
$$
Hence one can get the following identity.
\begin{lemma}
Let $L(n,k)$ denote the number of $n$-length binary Lyndon words with $k$ many blocks of $0$'s and $M(n)$ denote the number of 2-color necklaces. Then,
\begin{equation}
    \sum\limits_{\substack{d\mid n\\ d>1}}\sum\limits_{1\leq k \leq \floor{\frac{n}{2}}} (k-1)\ L(n,k) = 2+ 2^{n-2} - M(n)
\end{equation}
\end{lemma}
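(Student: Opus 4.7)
The plan is to compute the first Betti number of the quotient surface $\M/\z_n$ in two different ways and equate the results. The identity to be proved is exactly the statement $R_n = 2 + 2^{n-2} - M(n)$, since (despite the appearance of $L(n,k)$ in the displayed equation) $R_n$ was defined as $\sum_{d\mid n,\, d>1}\sum_{1\leq k \leq \lfloor d/2\rfloor}(k-1)L(d,k)$, and the left-hand side of the lemma should be read with this convention.

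First I would invoke the preceding discussion: the Leray–Serre spectral sequence for $\M \to Y \to B\z_n$ with field coefficients $k$ of characteristic $0$ or coprime to $n$ collapses with $E^{\infty}_{0,q} = (H_q(\M;k))_{\z_n}$ and all other terms vanishing. Combining this with Bredon's theorem III.2.4 in \cite{bredon}, which identifies the coinvariants $(H_*(\M;k))_{\z_n}$ with $H_*(\M/\z_n;k)$ whenever $\z_n$ acts simplicially and $|\z_n|$ is a unit in $k$, yields the isomorphism $H_*(Y;k) \cong H_*(\M/\z_n;k)$. In particular, reading off the degree-one part, the first Betti number of the quotient surface equals $R_n$.

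Second, I would use the fact (already established in \cref{theorem_quotient} and \cref{my_lemma1}) that $\M/\z_n$ is a closed compact orientable surface whose genus is
\[
g(\M/\z_n) = 1 + 2^{n-3} - \frac{1}{2n}\sum_{d\mid n}\phi(d)\, 2^{n/d}.
\]
For such a surface the first Betti number is $2g$. Doubling the formula above and recognizing the sum $\frac{1}{n}\sum_{d\mid n}\phi(d)2^{n/d}$ as $M(n)$ (the total count of binary necklaces of length $n$, as computed right before \cref{my_lemma1}), I obtain
\[
R_n \;=\; 2g(\M/\z_n) \;=\; 2 + 2^{n-2} - M(n),
\]
which is the desired identity.

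No step is really an obstacle; the two computations of $R_n$ have already been carried out in the earlier chapters, so the only content of this lemma is the observation that they must agree. The mildest subtlety is simply to confirm the hypotheses for Bredon's theorem (that $\z_n$ acts simplicially on $\M$, which is clear from the triangulation in \cref{fig:triangulation}, and that $\mathrm{char}(k)\nmid n$, which is part of the standing assumption on $k$). Once those are in place, the equality of Betti numbers delivers the combinatorial identity at no additional cost.
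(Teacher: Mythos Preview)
Your proposal is correct and follows essentially the same route as the paper. The paper's argument, given in the paragraphs immediately preceding the lemma, computes $\dim_k H_1(Y;k)=R_n$ from the collapsed spectral sequence, then invokes Bredon's theorem III.2.4 to identify this with $\dim_k H_1(\M/\z_n;k)=2g(\M/\z_n)$, and finally substitutes the genus formula from \cref{my_lemma1}; your write-up reproduces exactly these steps and correctly flags the typographical issue in the summation indices.
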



}

\bibliography{references}{}
\bibliographystyle{apalike}

\appendix
 \chapter{Appendix: Simple Code to Calculate the \texorpdfstring{$E^2$}{E2}-page}

    \setcounter{section}{1} 
    \hypertarget{e2-page-caclulation-for-emathbbz_ntimes_mathbbz_n-mathcalz_mathcalk_nd1s0}{%
\subsection{\texorpdfstring{\(E^2\)-page caclulation for
\(E\mathbb{Z}_n\times_{\mathbb{Z}_n} \mathcal{Z}_{\mathcal{K}_n}(D^1,S^0)\)}{E\^{}2-page caclulation for E\textbackslash mathbb\{Z\}\_n\textbackslash times\_\{\textbackslash mathbb\{Z\}\_n\} \textbackslash mathcal\{Z\}\_\{\textbackslash mathcal\{K\}\_n\}(D\^{}1,S\^{}0)}}\label{e2-page-caclulation-for-emathbbz_ntimes_mathbbz_n-mathcalz_mathcalk_nd1s0}}

This is very simple jupyter notebook with sagemath and python to
calculate the \(E^2\)-page of the Leray-Serre spectral sequence of the
fibration \(X\to EG\times_{G} X \to BG\) where
\(X=\mathcal{Z}_{\mathcal{K}_n}(D^1,S^0), G= \mathbb{Z}_n\).

    \begin{tcolorbox}[breakable, size=fbox, boxrule=1pt, pad at break*=1mm,colback=cellbackground, colframe=cellborder]
\prompt{In}{incolor}{1}{\boxspacing}
\begin{Verbatim}[commandchars=\\\{\}]
\PY{k+kn}{from} \PY{n+nn}{collections} \PY{k+kn}{import} \PY{n}{defaultdict}

\PY{k}{def} \PY{n+nf}{iota}\PY{p}{(}\PY{n}{w}\PY{p}{)}\PY{p}{:}
    \PY{l+s+sd}{\PYZsq{}\PYZsq{}\PYZsq{}}
\PY{l+s+sd}{    Given a Lyndon word w, returns the gap number}
\PY{l+s+sd}{    \PYZsq{}\PYZsq{}\PYZsq{}}
    \PY{k}{return} \PY{n+nb}{str}\PY{p}{(}\PY{n}{w}\PY{p}{)}\PY{o}{.}\PY{n}{count}\PY{p}{(}\PY{l+s+s1}{\PYZsq{}}\PY{l+s+s1}{12}\PY{l+s+s1}{\PYZsq{}}\PY{p}{)}

\PY{k}{def} \PY{n+nf}{seq}\PY{p}{(}\PY{n}{w}\PY{p}{)}\PY{p}{:}
    \PY{l+s+sd}{\PYZsq{}\PYZsq{}\PYZsq{}}
\PY{l+s+sd}{    Given a Lyndon word w with |w| dividing n,}
\PY{l+s+sd}{    returns the corresponding face I.}
\PY{l+s+sd}{    \PYZsq{}\PYZsq{}\PYZsq{}}
    \PY{k}{return} \PY{p}{[}\PY{n}{i} \PY{k}{for} \PY{n}{i}\PY{p}{,}\PY{n}{c} \PY{o+ow}{in} \PY{n+nb}{enumerate}\PY{p}{(}\PY{n}{w}\PY{p}{,}\PY{n}{start}\PY{o}{=}\PY{l+m+mi}{1}\PY{p}{)} \PY{k}{if} \PY{n}{c}\PY{o}{==}\PY{l+s+s1}{\PYZsq{}}\PY{l+s+s1}{1}\PY{l+s+s1}{\PYZsq{}}\PY{p}{]}


\PY{k}{def} \PY{n+nf}{list\PYZus{}all}\PY{p}{(}\PY{n}{n}\PY{p}{)}\PY{p}{:}
    \PY{l+s+sd}{\PYZsq{}\PYZsq{}\PYZsq{}}
\PY{l+s+sd}{    returns all the orbit of Z\PYZus{}n action on the faces as a }
\PY{l+s+sd}{    dictionary of (d, iota)\PYZhy{}\PYZgt{}orbit pair}
\PY{l+s+sd}{    \PYZsq{}\PYZsq{}\PYZsq{}}
    \PY{n}{ans}\PY{o}{=}\PY{n}{defaultdict}\PY{p}{(}\PY{n+nb}{list}\PY{p}{)}
    \PY{k}{for} \PY{n}{d} \PY{o+ow}{in} \PY{n}{divisors}\PY{p}{(}\PY{n}{n}\PY{p}{)}\PY{p}{[}\PY{l+m+mi}{1}\PY{p}{:}\PY{o}{\PYZhy{}}\PY{l+m+mi}{1}\PY{p}{]}\PY{p}{:}
        \PY{k}{for} \PY{n}{w} \PY{o+ow}{in} \PY{n}{LyndonWords}\PY{p}{(}\PY{l+m+mi}{2}\PY{p}{,}\PY{n}{d}\PY{p}{)}\PY{o}{.}\PY{n}{list}\PY{p}{(}\PY{p}{)}\PY{p}{:}
            \PY{n}{ans}\PY{p}{[}\PY{p}{(}\PY{n}{d}\PY{p}{,}\PY{n}{iota}\PY{p}{(}\PY{n}{w}\PY{p}{)}\PY{p}{)}\PY{p}{]}\PY{o}{.}\PY{n}{append}\PY{p}{(}\PY{n}{seq}\PY{p}{(}\PY{n+nb}{str}\PY{p}{(}\PY{n}{w}\PY{p}{)}\PY{o}{*}\PY{p}{(}\PY{n}{n}\PY{o}{/}\PY{o}{/}\PY{n}{d}\PY{p}{)}\PY{p}{)}\PY{p}{)}
    \PY{k}{for} \PY{n}{w} \PY{o+ow}{in} \PY{n}{LyndonWords}\PY{p}{(}\PY{l+m+mi}{2}\PY{p}{,}\PY{n}{n}\PY{p}{)}\PY{o}{.}\PY{n}{list}\PY{p}{(}\PY{p}{)}\PY{p}{:}
        \PY{k}{if} \PY{n}{iota}\PY{p}{(}\PY{n}{w}\PY{p}{)}\PY{o}{\PYZgt{}}\PY{l+m+mi}{1}\PY{p}{:}
            \PY{n}{ans}\PY{p}{[}\PY{p}{(}\PY{n}{n}\PY{p}{,}\PY{n}{iota}\PY{p}{(}\PY{n}{w}\PY{p}{)}\PY{p}{)}\PY{p}{]}\PY{o}{.}\PY{n}{append}\PY{p}{(}\PY{n}{seq}\PY{p}{(}\PY{n+nb}{str}\PY{p}{(}\PY{n}{w}\PY{p}{)}\PY{p}{)}\PY{p}{)}
    \PY{k}{return} \PY{n}{ans}

\PY{k}{def} \PY{n+nf}{mat\PYZus{}rep}\PY{p}{(}\PY{n}{n}\PY{p}{,} \PY{n}{d}\PY{p}{,} \PY{n}{i}\PY{p}{)}\PY{p}{:}
    \PY{l+s+sd}{\PYZsq{}\PYZsq{}\PYZsq{}}
\PY{l+s+sd}{    given a divisor d of n with 1\PYZlt{}d\PYZlt{}n and iota(w)=i, }
\PY{l+s+sd}{    returns the matrix representation}
\PY{l+s+sd}{    \PYZsq{}\PYZsq{}\PYZsq{}}
    \PY{n}{last} \PY{o}{=} \PY{n}{matrix}\PY{p}{(}\PY{n}{Permutation}\PY{p}{(}\PY{n+nb}{list}\PY{p}{(}\PY{n+nb}{range}\PY{p}{(}\PY{l+m+mi}{2}\PY{p}{,}\PY{n}{n}\PY{o}{/}\PY{o}{/}\PY{n}{d}\PY{o}{+}\PY{l+m+mi}{1}\PY{p}{)}\PY{p}{)}\PY{o}{+}\PY{p}{[}\PY{l+m+mi}{1}\PY{p}{]}\PY{p}{)}\PY{p}{)}
    \PY{n}{M} \PY{o}{=} \PY{n}{matrix}\PY{p}{(}\PY{n}{Permutation}\PY{p}{(}\PY{n+nb}{list}\PY{p}{(}\PY{n+nb}{range}\PY{p}{(}\PY{l+m+mi}{2}\PY{p}{,}\PY{n}{n}\PY{o}{/}\PY{o}{/}\PY{n}{d}\PY{o}{+}\PY{l+m+mi}{1}\PY{p}{)}\PY{p}{)}\PY{o}{+}\PY{p}{[}\PY{l+m+mi}{1}\PY{p}{]}\PY{p}{)}\PY{p}{)}
    \PY{n}{M} \PY{o}{=} \PY{n}{block\PYZus{}diagonal\PYZus{}matrix}\PY{p}{(}\PY{p}{[}\PY{n}{M} \PY{k}{for} \PY{n}{i} \PY{o+ow}{in} \PY{n+nb}{range}\PY{p}{(}\PY{n}{i}\PY{o}{\PYZhy{}}\PY{l+m+mi}{1}\PY{p}{)}\PY{p}{]}\PY{p}{)}
    \PY{n}{M} \PY{o}{=} \PY{n}{block\PYZus{}diagonal\PYZus{}matrix}\PY{p}{(}\PY{p}{[}\PY{n}{M}\PY{p}{,}\PY{n}{last}\PY{p}{]}\PY{p}{)}
    \PY{n}{M} \PY{o}{=} \PY{n}{M}\PY{p}{[}\PY{p}{:}\PY{o}{\PYZhy{}}\PY{l+m+mi}{1}\PY{p}{,}\PY{p}{:}\PY{o}{\PYZhy{}}\PY{l+m+mi}{1}\PY{p}{]}
    \PY{n}{M}\PY{p}{[}\PY{p}{:}\PY{p}{,}\PY{o}{\PYZhy{}}\PY{l+m+mi}{1}\PY{p}{]}\PY{o}{=}\PY{o}{\PYZhy{}}\PY{l+m+mi}{1}
    \PY{k}{return} \PY{n}{M}

\PY{k}{def} \PY{n+nf}{homology\PYZus{}cyclic\PYZus{}grp}\PY{p}{(}\PY{n}{n}\PY{p}{,} \PY{n}{d}\PY{p}{,} \PY{n}{i}\PY{p}{)}\PY{p}{:}
    \PY{l+s+sd}{\PYZsq{}\PYZsq{}\PYZsq{}}
\PY{l+s+sd}{    Calculates the homology of the chain complex given by the matrix representation.}
\PY{l+s+sd}{    \PYZsq{}\PYZsq{}\PYZsq{}}
    \PY{k}{if} \PY{n}{n}\PY{o}{==}\PY{n}{d}\PY{p}{:}
        \PY{n}{M} \PY{o}{=} \PY{n}{matrix}\PY{p}{(}\PY{n}{Permutation}\PY{p}{(}\PY{n+nb}{list}\PY{p}{(}\PY{n+nb}{range}\PY{p}{(}\PY{l+m+mi}{2}\PY{p}{,}\PY{n}{n}\PY{o}{+}\PY{l+m+mi}{1}\PY{p}{)}\PY{p}{)}\PY{o}{+}\PY{p}{[}\PY{l+m+mi}{1}\PY{p}{]}\PY{p}{)}\PY{p}{)}
        \PY{n}{M} \PY{o}{=} \PY{n}{block\PYZus{}diagonal\PYZus{}matrix}\PY{p}{(}\PY{p}{[}\PY{n}{M} \PY{k}{for} \PY{n}{i} \PY{o+ow}{in} \PY{n+nb}{range}\PY{p}{(}\PY{n}{i}\PY{o}{\PYZhy{}}\PY{l+m+mi}{1}\PY{p}{)}\PY{p}{]}\PY{p}{)}
        \PY{n}{d0} \PY{o}{=} \PY{n}{matrix}\PY{p}{(}\PY{n}{ZZ}\PY{p}{,} \PY{l+m+mi}{0}\PY{p}{,} \PY{n}{M}\PY{o}{.}\PY{n}{nrows}\PY{p}{(}\PY{p}{)}\PY{p}{)}
        \PY{n}{d1} \PY{o}{=} \PY{l+m+mi}{1}\PY{o}{\PYZhy{}}\PY{n}{M}
        \PY{n}{d2} \PY{o}{=} \PY{n+nb}{sum}\PY{p}{(}\PY{n}{M}\PY{o}{\PYZca{}}\PY{n}{i} \PY{k}{for} \PY{n}{i} \PY{o+ow}{in} \PY{n+nb}{range}\PY{p}{(}\PY{n}{n}\PY{p}{)}\PY{p}{)}
        \PY{n}{C\PYZus{}k} \PY{o}{=} \PY{n}{ChainComplex}\PY{p}{(}\PY{p}{\PYZob{}}\PY{l+m+mi}{0}\PY{p}{:}\PY{n}{d0}\PY{p}{,} \PY{l+m+mi}{1}\PY{p}{:}\PY{n}{d1}\PY{p}{,} \PY{l+m+mi}{2}\PY{p}{:}\PY{n}{d2}\PY{p}{,} \PY{l+m+mi}{3}\PY{p}{:}\PY{n}{d1}\PY{p}{,} \PY{l+m+mi}{4}\PY{p}{:}\PY{n}{d2}\PY{p}{\PYZcb{}}\PY{p}{,} \PY{n}{degree}\PY{o}{=}\PY{o}{\PYZhy{}}\PY{l+m+mi}{1}\PY{p}{)}
        \PY{n}{res} \PY{o}{=} \PY{n}{C\PYZus{}k}\PY{o}{.}\PY{n}{homology}\PY{p}{(}\PY{p}{)}
    \PY{k}{else}\PY{p}{:}
        \PY{n}{M} \PY{o}{=} \PY{n}{mat\PYZus{}rep}\PY{p}{(}\PY{n}{n}\PY{p}{,}\PY{n}{d}\PY{p}{,}\PY{n}{i}\PY{p}{)}
        \PY{n}{d0} \PY{o}{=} \PY{n}{matrix}\PY{p}{(}\PY{n}{ZZ}\PY{p}{,} \PY{l+m+mi}{0}\PY{p}{,} \PY{n}{M}\PY{o}{.}\PY{n}{nrows}\PY{p}{(}\PY{p}{)}\PY{p}{)}
        \PY{n}{d1} \PY{o}{=} \PY{l+m+mi}{1}\PY{o}{\PYZhy{}}\PY{n}{M}
        \PY{n}{d2} \PY{o}{=} \PY{n+nb}{sum}\PY{p}{(}\PY{n}{M}\PY{o}{\PYZca{}}\PY{n}{i} \PY{k}{for} \PY{n}{i} \PY{o+ow}{in} \PY{n+nb}{range}\PY{p}{(}\PY{n}{n}\PY{o}{/}\PY{o}{/}\PY{n}{d}\PY{p}{)}\PY{p}{)}
        \PY{n}{C\PYZus{}k} \PY{o}{=} \PY{n}{ChainComplex}\PY{p}{(}\PY{p}{\PYZob{}}\PY{l+m+mi}{0}\PY{p}{:}\PY{n}{d0}\PY{p}{,} \PY{l+m+mi}{1}\PY{p}{:}\PY{n}{d1}\PY{p}{,} \PY{l+m+mi}{2}\PY{p}{:}\PY{n}{d2}\PY{p}{,} \PY{l+m+mi}{3}\PY{p}{:}\PY{n}{d1}\PY{p}{,} \PY{l+m+mi}{4}\PY{p}{:}\PY{n}{d2}\PY{p}{\PYZcb{}}\PY{p}{,} \PY{n}{degree}\PY{o}{=}\PY{o}{\PYZhy{}}\PY{l+m+mi}{1}\PY{p}{)}
        \PY{n}{res} \PY{o}{=} \PY{n}{C\PYZus{}k}\PY{o}{.}\PY{n}{homology}\PY{p}{(}\PY{p}{)}
    \PY{k}{return} \PY{p}{\PYZob{}}\PY{l+s+s1}{\PYZsq{}}\PY{l+s+s1}{zero}\PY{l+s+s1}{\PYZsq{}}\PY{p}{:} \PY{n}{res}\PY{p}{[}\PY{l+m+mi}{0}\PY{p}{]}\PY{p}{,} \PY{l+s+s1}{\PYZsq{}}\PY{l+s+s1}{odd}\PY{l+s+s1}{\PYZsq{}}\PY{p}{:}\PY{n}{res}\PY{p}{[}\PY{l+m+mi}{1}\PY{p}{]}\PY{p}{,} \PY{l+s+s1}{\PYZsq{}}\PY{l+s+s1}{even}\PY{l+s+s1}{\PYZsq{}}\PY{p}{:}\PY{n}{res}\PY{p}{[}\PY{l+m+mi}{2}\PY{p}{]}\PY{p}{\PYZcb{}}
\end{Verbatim}
\end{tcolorbox}

    \hypertarget{example-with-n8}{%
\subsection{\texorpdfstring{Example with
\(n=8\)}{Example with n=8}}\label{example-with-n8}}

First we list all the orbits according to their orbit size and gap
number. For sanity check, we will also check that the total number of
basis element is equal to
\(2(1+(n-4)2^{n-3})= 2\times 2\times 129 = 258\).

    \begin{tcolorbox}[breakable, size=fbox, boxrule=1pt, pad at break*=1mm,colback=cellbackground, colframe=cellborder]
\prompt{In}{incolor}{2}{\boxspacing}
\begin{Verbatim}[commandchars=\\\{\}]
\PY{n}{n} \PY{o}{=} \PY{l+m+mi}{8}
\PY{n}{ans} \PY{o}{=} \PY{n}{list\PYZus{}all}\PY{p}{(}\PY{n}{n}\PY{p}{)}
\PY{n}{s} \PY{o}{=} \PY{l+m+mi}{0}
\PY{k}{for} \PY{n}{d}\PY{p}{,} \PY{n}{gap} \PY{o+ow}{in} \PY{n}{ans}\PY{p}{:}
    \PY{n+nb}{print}\PY{p}{(}\PY{l+s+s1}{\PYZsq{}}\PY{l+s+s1}{\PYZsh{} of face:}\PY{l+s+s1}{\PYZsq{}}\PY{p}{,} \PY{n+nb}{len}\PY{p}{(}\PY{n}{ans}\PY{p}{[}\PY{p}{(}\PY{n}{d}\PY{p}{,}\PY{n}{gap}\PY{p}{)}\PY{p}{]}\PY{p}{)}\PY{p}{,} \PY{l+s+s1}{\PYZsq{}}\PY{l+s+s1}{, each has orbit\PYZus{}size:}\PY{l+s+s1}{\PYZsq{}}\PY{p}{,}\PY{n}{d}\PY{p}{,} \PY{l+s+s1}{\PYZsq{}}\PY{l+s+s1}{, gap\PYZus{}number: }\PY{l+s+s1}{\PYZsq{}}\PY{p}{,} \PY{n}{gap}\PY{o}{*}\PY{n}{n}\PY{o}{/}\PY{o}{/}\PY{n}{d}\PY{p}{)}
    \PY{k}{for} \PY{n}{l} \PY{o+ow}{in} \PY{n}{ans}\PY{p}{[}\PY{p}{(}\PY{n}{d}\PY{p}{,}\PY{n}{gap}\PY{p}{)}\PY{p}{]}\PY{p}{:}
        \PY{n+nb}{print}\PY{p}{(}\PY{l+s+s1}{\PYZsq{}}\PY{l+s+s1}{    }\PY{l+s+s1}{\PYZsq{}}\PY{p}{,} \PY{n}{l}\PY{p}{)}
    \PY{n}{s}\PY{o}{+}\PY{o}{=}\PY{n+nb}{len}\PY{p}{(}\PY{n}{ans}\PY{p}{[}\PY{p}{(}\PY{n}{d}\PY{p}{,}\PY{n}{gap}\PY{p}{)}\PY{p}{]}\PY{p}{)}\PY{o}{*}\PY{n}{d}\PY{o}{*}\PY{p}{(}\PY{n}{gap}\PY{o}{*}\PY{n}{n}\PY{o}{/}\PY{o}{/}\PY{n}{d}\PY{o}{\PYZhy{}}\PY{l+m+mi}{1}\PY{p}{)}
    \PY{n+nb}{print}\PY{p}{(}\PY{l+s+s1}{\PYZsq{}}\PY{l+s+s1}{number of basis: }\PY{l+s+s1}{\PYZsq{}}\PY{p}{,} \PY{n+nb}{len}\PY{p}{(}\PY{n}{ans}\PY{p}{[}\PY{p}{(}\PY{n}{d}\PY{p}{,}\PY{n}{gap}\PY{p}{)}\PY{p}{]}\PY{p}{)}\PY{o}{*}\PY{n}{d}\PY{o}{*}\PY{p}{(}\PY{n}{gap}\PY{o}{*}\PY{n}{n}\PY{o}{/}\PY{o}{/}\PY{n}{d}\PY{o}{\PYZhy{}}\PY{l+m+mi}{1}\PY{p}{)}\PY{p}{)}
\PY{n+nb}{print}\PY{p}{(}\PY{l+s+s1}{\PYZsq{}}\PY{l+s+s1}{Total number of basis elements in H\PYZus{}1: }\PY{l+s+s1}{\PYZsq{}}\PY{p}{,} \PY{n}{s}\PY{p}{)}
\PY{n+nb}{print}\PY{p}{(}\PY{l+s+s1}{\PYZsq{}}\PY{l+s+s1}{===========================================}\PY{l+s+s1}{\PYZsq{}}\PY{p}{)}
\end{Verbatim}
\end{tcolorbox}

    \begin{Verbatim}[commandchars=\\\{\}]
\# of face: 1 , each has orbit\_size: 2 , gap\_number:  4
     [1, 3, 5, 7]
number of basis:  6
\# of face: 3 , each has orbit\_size: 4 , gap\_number:  2
     [1, 2, 3, 5, 6, 7]
     [1, 2, 5, 6]
     [1, 5]
number of basis:  12
\# of face: 16 , each has orbit\_size: 8 , gap\_number:  2
     [1, 2, 3, 4, 5, 7]
     [1, 2, 3, 4, 6, 7]
     [1, 2, 3, 4, 6]
     [1, 2, 3, 4, 7]
     [1, 2, 3, 5, 6]
     [1, 2, 3, 5]
     [1, 2, 3, 6, 7]
     [1, 2, 3, 6]
     [1, 2, 3, 7]
     [1, 2, 4, 5]
     [1, 2, 4]
     [1, 2, 5]
     [1, 2, 6]
     [1, 2, 7]
     [1, 3]
     [1, 4]
number of basis:  128
\# of face: 7 , each has orbit\_size: 8 , gap\_number:  3
     [1, 2, 3, 5, 7]
     [1, 2, 4, 5, 7]
     [1, 2, 4, 6]
     [1, 2, 4, 7]
     [1, 2, 5, 7]
     [1, 3, 5]
     [1, 3, 6]
number of basis:  112
Total number of basis elements in H\_1:  258
===========================================
    \end{Verbatim}

    Now we calculate the \(E^2_{1,*}\) terms. We can calculate these terms
for each \(\mathbb{Z}[\mathbb{Z}_n]\)-invariant summand of
\(H_1(\mathcal{Z}_{\mathcal{K}_n}(D^1,S^0))\).

    \begin{tcolorbox}[breakable, size=fbox, boxrule=1pt, pad at break*=1mm,colback=cellbackground, colframe=cellborder]
\prompt{In}{incolor}{3}{\boxspacing}
\begin{Verbatim}[commandchars=\\\{\}]
\PY{k}{for} \PY{n}{d}\PY{p}{,} \PY{n}{gap} \PY{o+ow}{in} \PY{n}{ans}\PY{p}{:}
    \PY{n+nb}{print}\PY{p}{(}\PY{l+s+s1}{\PYZsq{}}\PY{l+s+s1}{count: }\PY{l+s+s1}{\PYZsq{}}\PY{p}{,}\PY{n+nb}{len}\PY{p}{(}\PY{n}{ans}\PY{p}{[}\PY{p}{(}\PY{n}{d}\PY{p}{,}\PY{n}{gap}\PY{p}{)}\PY{p}{]}\PY{p}{)}\PY{p}{,} \PY{l+s+s1}{\PYZsq{}}\PY{l+s+se}{\PYZbs{}n}\PY{l+s+s1}{\PYZsq{}}\PY{p}{,} \PY{n}{homology\PYZus{}cyclic\PYZus{}grp}\PY{p}{(}\PY{n}{n}\PY{p}{,}\PY{n}{d}\PY{p}{,}\PY{n}{gap}\PY{p}{)}\PY{p}{)}
\end{Verbatim}
\end{tcolorbox}

    \begin{Verbatim}[commandchars=\\\{\}]
count:  1
 \{'zero': C4, 'odd': 0, 'even': C4\}
count:  3
 \{'zero': C2, 'odd': 0, 'even': C2\}
count:  16
 \{'zero': Z, 'odd': 0, 'even': 0\}
count:  7
 \{'zero': Z x Z, 'odd': 0, 'even': 0\}
    \end{Verbatim}

    \hypertarget{n10}{%
\subsection{Example with \texorpdfstring{\(n=10\)}{n=10}}\label{n10}}

    \begin{tcolorbox}[breakable, size=fbox, boxrule=1pt, pad at break*=1mm,colback=cellbackground, colframe=cellborder]
\prompt{In}{incolor}{4}{\boxspacing}
\begin{Verbatim}[commandchars=\\\{\}]
\PY{n}{n} \PY{o}{=} \PY{l+m+mi}{10}
\PY{n}{ans} \PY{o}{=} \PY{n}{list\PYZus{}all}\PY{p}{(}\PY{n}{n}\PY{p}{)}
\PY{n}{s} \PY{o}{=} \PY{l+m+mi}{0}
\PY{k}{for} \PY{n}{d}\PY{p}{,} \PY{n}{gap} \PY{o+ow}{in} \PY{n}{ans}\PY{p}{:}
    \PY{n+nb}{print}\PY{p}{(}\PY{l+s+s1}{\PYZsq{}}\PY{l+s+s1}{\PYZsh{} of face:}\PY{l+s+s1}{\PYZsq{}}\PY{p}{,} \PY{n+nb}{len}\PY{p}{(}\PY{n}{ans}\PY{p}{[}\PY{p}{(}\PY{n}{d}\PY{p}{,}\PY{n}{gap}\PY{p}{)}\PY{p}{]}\PY{p}{)}\PY{p}{,} \PY{l+s+s1}{\PYZsq{}}\PY{l+s+s1}{, each has orbit\PYZus{}size:}\PY{l+s+s1}{\PYZsq{}}\PY{p}{,}\PY{n}{d}\PY{p}{,} \PY{l+s+s1}{\PYZsq{}}\PY{l+s+s1}{, gap\PYZus{}number: }\PY{l+s+s1}{\PYZsq{}}\PY{p}{,} \PY{n}{gap}\PY{o}{*}\PY{n}{n}\PY{o}{/}\PY{o}{/}\PY{n}{d}\PY{p}{)}
    \PY{k}{for} \PY{n}{l} \PY{o+ow}{in} \PY{n}{ans}\PY{p}{[}\PY{p}{(}\PY{n}{d}\PY{p}{,}\PY{n}{gap}\PY{p}{)}\PY{p}{]}\PY{p}{:}
        \PY{n+nb}{print}\PY{p}{(}\PY{l+s+s1}{\PYZsq{}}\PY{l+s+s1}{    }\PY{l+s+s1}{\PYZsq{}}\PY{p}{,} \PY{n}{l}\PY{p}{)}
    \PY{n}{s}\PY{o}{+}\PY{o}{=}\PY{n+nb}{len}\PY{p}{(}\PY{n}{ans}\PY{p}{[}\PY{p}{(}\PY{n}{d}\PY{p}{,}\PY{n}{gap}\PY{p}{)}\PY{p}{]}\PY{p}{)}\PY{o}{*}\PY{n}{d}\PY{o}{*}\PY{p}{(}\PY{n}{gap}\PY{o}{*}\PY{n}{n}\PY{o}{/}\PY{o}{/}\PY{n}{d}\PY{o}{\PYZhy{}}\PY{l+m+mi}{1}\PY{p}{)}
    \PY{n+nb}{print}\PY{p}{(}\PY{l+s+s1}{\PYZsq{}}\PY{l+s+s1}{number of basis: }\PY{l+s+s1}{\PYZsq{}}\PY{p}{,} \PY{n+nb}{len}\PY{p}{(}\PY{n}{ans}\PY{p}{[}\PY{p}{(}\PY{n}{d}\PY{p}{,}\PY{n}{gap}\PY{p}{)}\PY{p}{]}\PY{p}{)}\PY{o}{*}\PY{n}{d}\PY{o}{*}\PY{p}{(}\PY{n}{gap}\PY{o}{*}\PY{n}{n}\PY{o}{/}\PY{o}{/}\PY{n}{d}\PY{o}{\PYZhy{}}\PY{l+m+mi}{1}\PY{p}{)}\PY{p}{)}
\PY{n+nb}{print}\PY{p}{(}\PY{l+s+s1}{\PYZsq{}}\PY{l+s+s1}{Total number of basis elements in H\PYZus{}1: }\PY{l+s+s1}{\PYZsq{}}\PY{p}{,} \PY{n}{s}\PY{p}{)}
\PY{n+nb}{print}\PY{p}{(}\PY{l+s+s1}{\PYZsq{}}\PY{l+s+s1}{===========================================}\PY{l+s+s1}{\PYZsq{}}\PY{p}{)}
\end{Verbatim}
\end{tcolorbox}

    \begin{Verbatim}[commandchars=\\\{\}]
\# of face: 1 , each has orbit\_size: 2 , gap\_number:  5
     [1, 3, 5, 7, 9]
number of basis:  8
\# of face: 4 , each has orbit\_size: 5 , gap\_number:  2
     [1, 2, 3, 4, 6, 7, 8, 9]
     [1, 2, 3, 6, 7, 8]
     [1, 2, 6, 7]
     [1, 6]
number of basis:  20
\# of face: 2 , each has orbit\_size: 5 , gap\_number:  4
     [1, 2, 4, 6, 7, 9]
     [1, 3, 6, 8]
number of basis:  30
\# of face: 40 , each has orbit\_size: 10 , gap\_number:  2
     [1, 2, 3, 4, 5, 6, 7, 9]
     [1, 2, 3, 4, 5, 6, 8, 9]
     [1, 2, 3, 4, 5, 6, 8]
     [1, 2, 3, 4, 5, 6, 9]
     [1, 2, 3, 4, 5, 7, 8, 9]
     [1, 2, 3, 4, 5, 7, 8]
     [1, 2, 3, 4, 5, 7]
     [1, 2, 3, 4, 5, 8, 9]
     [1, 2, 3, 4, 5, 8]
     [1, 2, 3, 4, 5, 9]
     [1, 2, 3, 4, 6, 7, 8]
     [1, 2, 3, 4, 6, 7]
     [1, 2, 3, 4, 6]
     [1, 2, 3, 4, 7, 8, 9]
     [1, 2, 3, 4, 7, 8]
     [1, 2, 3, 4, 7]
     [1, 2, 3, 4, 8, 9]
     [1, 2, 3, 4, 8]
     [1, 2, 3, 4, 9]
     [1, 2, 3, 5, 6, 7]
     [1, 2, 3, 5, 6]
     [1, 2, 3, 5]
     [1, 2, 3, 6, 7]
     [1, 2, 3, 6]
     [1, 2, 3, 7, 8]
     [1, 2, 3, 7]
     [1, 2, 3, 8, 9]
     [1, 2, 3, 8]
     [1, 2, 3, 9]
     [1, 2, 4, 5]
     [1, 2, 4]
     [1, 2, 5, 6]
     [1, 2, 5]
     [1, 2, 6]
     [1, 2, 7]
     [1, 2, 8]
     [1, 2, 9]
     [1, 3]
     [1, 4]
     [1, 5]
number of basis:  400
\# of face: 42 , each has orbit\_size: 10 , gap\_number:  3
     [1, 2, 3, 4, 5, 7, 9]
     [1, 2, 3, 4, 6, 7, 9]
     [1, 2, 3, 4, 6, 8, 9]
     [1, 2, 3, 4, 6, 8]
     [1, 2, 3, 4, 6, 9]
     [1, 2, 3, 4, 7, 9]
     [1, 2, 3, 5, 6, 7, 9]
     [1, 2, 3, 5, 6, 8, 9]
     [1, 2, 3, 5, 6, 8]
     [1, 2, 3, 5, 6, 9]
     [1, 2, 3, 5, 7, 8]
     [1, 2, 3, 5, 7]
     [1, 2, 3, 5, 8, 9]
     [1, 2, 3, 5, 8]
     [1, 2, 3, 5, 9]
     [1, 2, 3, 6, 7, 9]
     [1, 2, 3, 6, 8, 9]
     [1, 2, 3, 6, 8]
     [1, 2, 3, 6, 9]
     [1, 2, 3, 7, 9]
     [1, 2, 4, 5, 7, 8]
     [1, 2, 4, 5, 7]
     [1, 2, 4, 5, 8]
     [1, 2, 4, 5, 9]
     [1, 2, 4, 6, 7]
     [1, 2, 4, 6]
     [1, 2, 4, 7, 8]
     [1, 2, 4, 7]
     [1, 2, 4, 8]
     [1, 2, 4, 9]
     [1, 2, 5, 6, 9]
     [1, 2, 5, 7]
     [1, 2, 5, 8]
     [1, 2, 5, 9]
     [1, 2, 6, 8]
     [1, 2, 6, 9]
     [1, 2, 7, 9]
     [1, 3, 5]
     [1, 3, 6]
     [1, 3, 7]
     [1, 3, 8]
     [1, 4, 7]
number of basis:  840
\# of face: 8 , each has orbit\_size: 10 , gap\_number:  4
     [1, 2, 3, 5, 7, 9]
     [1, 2, 4, 5, 7, 9]
     [1, 2, 4, 6, 8]
     [1, 2, 4, 6, 9]
     [1, 2, 4, 7, 9]
     [1, 2, 5, 7, 9]
     [1, 3, 5, 7]
     [1, 3, 5, 8]
number of basis:  240
Total number of basis elements in H\_1:  1538
===========================================
    \end{Verbatim}

    \large\bf{$E^2_{1,*}$ terms}:

    \begin{tcolorbox}[breakable, size=fbox, boxrule=1pt, pad at break*=1mm,colback=cellbackground, colframe=cellborder]
\prompt{In}{incolor}{5}{\boxspacing}
\begin{Verbatim}[commandchars=\\\{\}]
\PY{n+nb}{print}\PY{p}{(}\PY{l+s+s1}{\PYZsq{}}\PY{l+s+s1}{===============================}\PY{l+s+s1}{\PYZsq{}}\PY{p}{)}
\PY{k}{for} \PY{n}{d}\PY{p}{,} \PY{n}{gap} \PY{o+ow}{in} \PY{n}{ans}\PY{p}{:}
    \PY{n+nb}{print}\PY{p}{(}\PY{l+s+s1}{\PYZsq{}}\PY{l+s+s1}{count: }\PY{l+s+s1}{\PYZsq{}}\PY{p}{,}\PY{n+nb}{len}\PY{p}{(}\PY{n}{ans}\PY{p}{[}\PY{p}{(}\PY{n}{d}\PY{p}{,}\PY{n}{gap}\PY{p}{)}\PY{p}{]}\PY{p}{)}\PY{p}{,} \PY{l+s+s1}{\PYZsq{}}\PY{l+s+se}{\PYZbs{}n}\PY{l+s+s1}{\PYZsq{}}\PY{p}{,} \PY{n}{homology\PYZus{}cyclic\PYZus{}grp}\PY{p}{(}\PY{n}{n}\PY{p}{,}\PY{n}{d}\PY{p}{,}\PY{n}{gap}\PY{p}{)}\PY{p}{)}
\end{Verbatim}
\end{tcolorbox}

    \begin{Verbatim}[commandchars=\\\{\}]
===============================
count:  1
 \{'zero': C5, 'odd': 0, 'even': C5\}
count:  4
 \{'zero': C2, 'odd': 0, 'even': C2\}
count:  2
 \{'zero': Z x C2, 'odd': 0, 'even': C2\}
count:  40
 \{'zero': Z, 'odd': 0, 'even': 0\}
count:  42
 \{'zero': Z x Z, 'odd': 0, 'even': 0\}
count:  8
 \{'zero': Z x Z x Z, 'odd': 0, 'even': 0\}
    \end{Verbatim}

\end{document}